\documentclass[12pt, reqno]{amsart}
\usepackage[utf8]{inputenc}

\usepackage{mathtools}
\usepackage{amssymb}
\usepackage{tikz}

\usetikzlibrary{cd}
\usetikzlibrary{decorations.pathmorphing}

\usepackage{xcolor}
\usepackage{ragged2e}
\usepackage[style=alphabetic,backend=bibtex, maxbibnames=99]{biblatex}
\renewbibmacro{in:}{}
\usepackage{hyperref}

\usepackage{comment}

\usepackage[margin=3.5cm]{geometry}

\theoremstyle{plain}
\newtheorem{proposition}{Proposition}
\newtheorem{theorem}{Theorem}
\newtheorem{lemma}{Lemma}
\newtheorem{corollary}{Corollary}
\theoremstyle{remark}
\newtheorem{remark}{Remark}

\newtheorem{definition}{Definition}

\newcommand{\CC}{\mathbb{C}}
\newcommand{\RR}{\mathbb{R}}

\newcommand{\HH}{\mathcal{H}}

\newcommand{\Pm}{\mathcal{P}}

\renewcommand{\aa}{\mathfrak{a}}

\renewcommand{\gg}{\mathfrak{g}}
\newcommand{\kk}{\mathfrak{k}}
\newcommand{\mm}{\mathfrak{m}}

\newcommand{\nn}{\mathfrak{n}}
\renewcommand{\ss}{\mathfrak{s}}

\newcommand{\uu}{\mathfrak{u}}

\DeclareMathOperator{\Ad}{Ad}

\title[FH = infinite-time QGT]{Fourier--Helgason  Transform as Infinite Geodesic Time Limit in Geometric Quantization}

\author{A.C.~Ferreira}
\address{Ana~Cristina~Ferreira\\
Centro de Matem\'atica, Universidade do Minho, Campus de Gualtar, 4710-057 Braga, Portugal.}
\email{anaferreira@math.uminho.pt}

\author{J.~Hilgert}
\address{Joachim~Hilgert\\Department of Mathematics, Paderborn University, Paderborn, Germany.}
\email{joachim.hilgert@upb.de}

\author{J.~M.~Mourão}
\address{Jos\'e~M.~Mourão\\ Department of Mathematics and Center for Mathematical Analysis, Geometry and Dynamical Systems, Instituto Superior T\'ecnico, Av. Rovisco Pais 1, 1049-001, Lisbon, Portugal.}
\email{jmourao@tecnico.ulisboa.pt}

\author{J.~P.~Nunes}
\address{Jo\~ao~P.~Nunes\\Department of Mathematics and Center for Mathematical Analysis, Geometry and Dynamical Systems, Instituto Superior T\'ecnico, Av. Rovisco Pais 1, 1049-001, Lisbon, Portugal.}
\email{jpnunes@math.tecnico.ulisboa.pt}

\date{\today}

\begin{document}

\begin{abstract}

The Fourier--Helgason (FH) transform for a noncompact symmetric space $G/K$ establishes the direct integral decomposition of the 
unitary representation of $G$ on $L^2(G/K)$ into irreducible principal series representations.

By applying techniques of geometric quantization to the symplectic manifold $T^*(G/K),$ Lisiecki in 1987 gave a geometric 
interpretation of the FH transform in the case when $G$ is complex. 
He defined for general $G$ a ``horizontal" polarization on $T^*(G/K)$ and showed that, for complex $G$,  the Blattner-Kostant-Sternberg (BKS) pairing between the
Schr\"odinger vertical polarization Hilbert space, $L^2(G/K)$, and the Hilbert space of horizontally polarized functions 
coincides with the FH transform. However, in the same paper, Lisiecki showed that for noncomplex Lie groups 
the BKS pairing is nonequivalent to the FH transform and  nonunitary in general.

In the present paper, we resolve this discrepancy between the FH transform and geometric quantization in the case when $G$ is not complex. 

First, we show that the horizontal  polarization is the infinite-time limit of the push-forward of the  vertical
polarization with respect to the geodesic flow for a $G$--invariant Riemannian metric.
Then we lift the geodesic flow to an intertwining unitary parallel transport on the quantum bundle that we call quantum geodesic transform (QGT). Finally we show that
the QGT has a well-defined limit, as the  geodesic time goes to infinity, and that it is equal, up to the phase of the Harish-Chandra $c$--function and an irrelevant multiplicative constant, to the FH transform.

\end{abstract}

\maketitle

\newpage

\tableofcontents

\section{Introduction and main results}
\label{sect-introd}

{}

The Fourier--Helgason (FH) transform for a noncompact symmetric space $G/K$ corresponds to the direct integral decomposition of the 
unitary representation of $G$ 
on square integrable functions on $G/K$
 into irreducible principal series representations 
\begin{equation}   \label{e1}
L^2(G/K, d \nu) = \int_{ \widehat G_{\rm sph}}  \, {\mathcal{H}_\pi} \, d\widehat \nu(\pi)  \, ,
\end{equation}
where $d\nu$ is a suitably normalized $G$-invariant measure on $X=G/K$, $\widehat G_{\rm sph}$ is the spherical unitary dual of $G$, i.e. equivalence classes of irreducible unitary representations $(\pi,\mathcal H_\pi)$ that contain a nonzero $K$-fixed vector and 	$d\widehat\nu(\pi)$ is the (spherical) Plancherel measure corresponding to $d\nu$.

With the aim of giving a geometric quantization interpretation of the FH transform, 
 in \cite{Lisiecki87} Lisiecki identified a ``horizontal" polarization (which we call Fourier) on $T^*(G/K)$, transverse to the vertical Schr\"odinger polarization,
 for which polarized functions are functions on $\mathfrak{a}^*_+ \times K/M$, just as the image of the  FH transform. 
 
 Lisiecki, however, identified a problem, which was the fact
 that the geometric quantization Blattner-Kostant-Sternberg (BKS) pairing map between the Schr\"o\-dinger Hilbert space and the Hilbert space for the Fourier polarization is not 
 equivalent to the FH transform and
 nonunitary in general, except if the group $G$ is complex (the complexification of $K$) in which case it does coincide with the FH transform.

In the present paper we resolve this problem and give  a geometric quantization interpretation of the FH transform for general semisimple connected Lie groups $G$.
First, we show that Lisiecki's Fourier polarization  is the infinite-time limit of a family of polarizations $\mathcal{P}_t, t\geq 0$,  obtained by the push-forward of the  vertical
polarization, $\mathcal{P}_0$, with respect to the geodesic flow for a $G$-invariant Riemannian metric on $T^*(G/K)$.

\bigskip

\noindent {\bf Theorem A} (Theorem \ref{thmpolarization}, in Section \ref{sectionevolutionofpol}) On the open dense subset $(T^*X)_\mathrm{reg}\subset T^*X$ (see Section \ref{subsechamiltonian}), the push-forward of the vertical polarization by the geodesic flow gives in the infinite-time limit the Fourier polarization $\mathcal{P}_\mathrm{F}.$
\bigskip

We then lift the geodesic flow to an intertwining unitary parallel transport on the quantum bundle that we call quantum geodesic transform (QGT). 
That is, we find a $G$-equivariant unitary isomorphism between the Hilbert space for the (half-form corrected) vertically polarized functions 
and the Hilbert space of (half-form corrected) $\mathcal{P}_t-$polarized functions, which lifts the geodesic flow and intertwines the two quantizations
$$
U_t: \mathcal{H}_{\mathcal{P}_0} \cong L^2(G/K, dx)\stackrel{\sim}{\longrightarrow} \mathcal{H}_{\mathcal{P}_t},\, t\geq 0.
$$

Finally we show that
the QGT has a well-defined limit as the  geodesic time goes to infinity. The limit is given by a transform $\mathrm{U}$ (see Definition \ref{defU})  that is  unitarily equivalent (equal up to an irrelevant constant and the phase of the Harish-Chandra $c$--function) to the 
FH transform. 
Our second main result reads:

\bigskip

\noindent {\bf Theorem B} (Theorem \ref{ThQG}, in Section \ref{toinftyandbeyond}) Let ${U}_t^\epsilon$ be the regularized quantum geodesic transform (see Definition \ref{defregularizedU}) at time $t$. One has
$$
\lim_{\epsilon\to 0} \lim_{t\to +\infty} U^\epsilon_t = \kappa \,\mathrm{U} \, : \, 
L^2(G/K, dx) \longrightarrow  L^2\left(\mathfrak{a}^*_+ \times K/M, \frac{d\lambda db}{|c(\lambda)|^2}\right)
$$
where $\kappa \neq 0$ is a constant.
\bigskip

{}

\begin{remark}
  The fact that both the Schr\"odinger Hilbert space and the Hilbert space for the Fourier polarization are $L^2$--spaces (on $X=G/K=\pi(T^*X)$ and on $\mathfrak{a}^*_+ \times K/M = \widehat \pi((T^*X)_{\rm reg})$, respectively (see Proposition \ref{prop:Fourier-polarisation})) means that they correspond to two Schr\"odinger models for $T^*X.$ The unitarity (up to constant) of the quantum geodesic transform shows that the quantizations corresponding to these two models are equivalent, with equivalence given by the FH transform rotated by the phase of the Harish-Chandra $c$--function. 
  
  This result could be of interest to the theory of transfer operators in the Langlands program, when
    these operators are viewed as intertwining different Schr\"odinger models  for the 
    cotangent bundle of $ \mathfrak{X} = \left(X \times X \right)/G$, realized as
    symplectic quotient of $T^*(X\times X)$ with respect to the diagonal action of $G$,
    $$
    T^* \mathfrak{X} 
=    \left(T^*X \times_{\mathfrak{g}^*} T^*X\right)/G \, ,
    $$
    (see Section 5 of \cite{sakellaridis23}).
    It would also be of interest to relate the horospherical limits in \cite{sakellaridis25}
with the quantum geodesic transform at infinite geodesic time.
\end{remark}

The QGT considered in this paper is an analog, for real-time Hamiltonian evolution, of the generalized coherent state transforms that describe the relation between quantizations related to each other by Hamiltonian flows in imaginary time. See \cite{baier.hilgert.kaya.mourao.nunes:2023} for the case of $T^*K$, where $K$ is a compact Lie group and \cite{baier.ferreira.hilgert.mourao.nunes:2024} for the case of cotangent bundles of 
symmetric spaces of compact type.
In the present paper 
we study the case dual to the one in 
\cite{baier.ferreira.hilgert.mourao.nunes:2024}, namely  that of Riemannian symmetric spaces of noncompact type.

\section{Preliminaries}
\label{sect-prelim}

{
\subsection{Notation}
\label{Notation}
The following standard notation will be used throughout the paper.

$G$ denotes a (not necessarily complex) noncompact, connected, semisimple Lie group with finite center

$e$ denotes the identity element of $G$

$\gg$ denotes its Lie algebra

$\sigma: G \longrightarrow G$ is a fixed Cartan involution 

$\gg= \kk + \ss$ is a fixed Cartan decomposition of $\gg$ (with respect to $d_e \sigma: \gg \longrightarrow \gg$)

$\aa$ is a fixed maximal abelian subalgebra of $\ss$

$\mm$ is the centralizer of $\aa$ in $\kk$ 

$\aa^\ast_+$ is a fixed open Weyl chamber

$\Sigma$ is the set of restricted roots of $(\gg,\aa)$: for $\alpha \in \Sigma$, $\gg_\alpha$ is the corresponding root space 

$\Sigma^+$ is the subset of positive roots corresponding to $\aa^\ast_+$

$\nn = \sum_{\alpha\in \Sigma^+} \gg_\alpha $ and $\bar\nn = \sum_{-\alpha\in \Sigma^+} \gg_\alpha $

$\gg= \kk + \aa + \nn$ is the Iwasawa decomposition of $\gg$.

$K$ is the analytic subgroup of $G$ with Lie algebra $\kk$ (a maximal compact subgroup of $G$)

$A= \exp \aa$

$N= \exp \nn$

$M$ is the centralizer of $A$ in $K$

$MAN$ is the minimal parabolic subgroup of $G$ (its Lie algebra equals $\mm+\aa+\nn)$

$X=G/K$ (Riemannian symmetric space of noncompact type), $x_o =e\cdot K$ (the ``origin'' of $X$)

$B=G/MAN = K/M$, $b_o=e\cdot MAN=e\cdot M$

}

Throughout the paper, we will use  the Iwasawa decompositions of $G$,  $G=KAN$ and $G= NAK$ 
and we will write
\begin{equation}\label{iwasawas}
   G\ni g = k_1(g) e^{H(g)} n_1(g) = n_2(g) e^{A(g)} k_2(g),
\end{equation}
where, for $j=1,2$, $k_j(g)\in K$, $n_j(g)\in N$ and $H(g), A(g)\in \aa.$

{}

Since $G$ is connected and semisimple, it is also unimodular and we fix a left- and right-invariant  Haar measure $dg$ on $G$. The induced left-$G$-invariant measure on $X$ is denoted by $dx$. More precisely, we let $dg$, and therefore also $dx$, be normalized as in \cite{helgason:2008}, Chapter II, Paragraph 3, Section 1. We will denote by $db$ the normalized $K$-invariant measure on $B$. The invariant  measures on $K$ and $M$ will be denoted, respectively, by $dk$ and $dm$, so that, from Theorem I.1.9 in \cite{helgason:1984}, one has
$$
\int_K f(x) dk = \int_B \left(\int_M f(km) dm \right) db.
$$
The Lebesgue measure on $\aa_+^*$ with normalization induced by the Killing form, and then multiplied by a factor of $(2\pi)^{-\frac12 \dim \aa}$, will be denoted by $d\lambda$, as in  \cite{helgason:2008}, Chapter 2, Paragraph 3, Section 1.

\subsection{Basic definitions}\label{subsec:basic definitions}

Let $G$ be a {non}compact connected semisimple Lie group with finite center
and $$\sigma:G\to G$$ an involutive automorphism 
such that $X=G/K$ is a symmetric space of {non-}compact type, where $K$ is a maximal compact subgroup of $G$ and a relatively open subgroup of the set $G^\sigma$ of fixed points. Note that $K$ is automatically connected, see \cite[\S~13.3]{HN12}. 
 
One has for the Lie algebra {$\gg$} of $G$, and for its dual $\gg^*$, orthogonal Cartan decompositions with respect to a fixed $\Ad_G$-invariant nondegenerate indefinite {pseudo-}inner product, $\langle\cdot, \cdot\rangle_\gg$, on $\gg$,
$$
\gg = \kk \oplus \ss, \,\, \gg^* = \kk^* \oplus \ss^*,
$$
where, for the derived automorphism of $\gg$, now also denoted by $\sigma$,
$$
{\sigma_{\vert{\kk}} = \mathrm{Id}_\kk}, \, \sigma_{\vert{\ss}} = -\mathrm{Id}_\ss,
$$
with $[\kk,\ss] \subset \ss$ and $[\ss,\ss] \subset \kk$. 

The exponential map along $\ss$ followed by the canonical projection onto $X$,
$$\ss \stackrel{\exp}{\longrightarrow} G\to X,$$ is a diffeomorphism so that, in particular, $X$ is simply-connected.

The cotangent bundle $T^\ast X$ has the structure of a {homogeneous} vector bundle associated to the principal $K$-bundle $G\to X$,
$$T^\ast X =G\times_K \ss^*,
$$
where 
\begin{equation}
\nonumber
[g,\xi] = [gk, \mathrm{Ad}^\ast_{k^{-1}}(\xi)],\,\, g\in G, \xi\in \ss^*, k\in K.    
\end{equation}

\subsection{$T^\ast  X$ as symplectic reduction of $T^\ast  G$}

Here, we will recall briefly standard facts about the symplectic geometry of the cotangent bundles $T^\ast  G$ and $T^\ast  X$, realized as the symplectic quotient
\[
  T^\ast  X = \left(T^\ast  G\right) /\!\!/ K \,.
\]

Recall that there is a Hamiltonian right action of $G$ on $T^\ast G$ with moment map $\mu^{(R)}$. The moment map of the corresponding right action of the subgroup $K\subset G$ is
\begin{equation}
\label{ee-muk}
 \mu^K = \pi_{\kk^\ast } \circ \mu^{(R)} ,
 \qquad \pi_{\kk^\ast }: \gg^\ast  \to \kk^\ast  .
\end{equation}
Using the {left-}trivialization $T^\ast G\cong G\times \gg^\ast  $ one finds that
\begin{eqnarray}
 \label{e-quot-const}
 \nonumber \left(\mu^K \right)^{-1}(0) &=& G \times \ss^\ast 
 \quad \text{ and } \\ 
 \left(T^\ast  G\right) /\!\!/ K &=&
 \left( \mu^K \right)^{-1}(0) / K =
 G \times_K \ss^\ast  
 \ =\ T^\ast X.
\end{eqnarray}

The moment map of the left $G$-action on $T^\ast  X$, which we will denote ${\mu}$, descends from $T^\ast G$ through this quotient,
\begin{equation}
\label{e-moment-symm-space}
 \begin{tikzcd}
  G \arrow[r, phantom, "{\circlearrowright}"] & T^\ast  X \arrow[r, "{{\mu}}"] & \gg^\ast 
 \end{tikzcd}, \qquad
{\mu}([g,\xi]) = \Ad^\ast _g \xi ,
\end{equation}
where $[g, \xi]$ denotes 
the $K$-orbit through 
the point $(g,\xi)$ in
$G \times \ss^*$.

\subsection{Hamiltonian geometry on $T^*X$}\label{subsechamiltonian}

Let us recall some results of Lisiecki \cite{Lisiecki87} which will be useful below.
Let $\gg_h^*$ denote the set of so-called hyperbolic elements in $\gg^*$, that is for a coadjoint orbit $\mathcal{O}_\lambda$, one has 
$$
\mathcal{O}_\lambda \subset \gg_h^* \,\, \mbox{iff} \,\, \mathcal{O}_\lambda \cap \ss^* \neq \emptyset.
$$
There is then a bijection of orbit spaces for the coadjoint action of $G$ and $K$
$$
\gg^*_h/G \stackrel{\sim}{\to} \ss^*/K,
$$
leading to an identification
$$
 \gg^*_h/G \ni \mathcal{O}_\lambda \stackrel{\sim}{\mapsto}   \lambda \in \bar \aa_+^*\setminus\{0\} 
$$
where $\bar \aa_+^*$ denotes the closure of $\aa_+^*.$ Under this identification, regular coadjoint orbits (that is, of maximal dimension) correspond to $\lambda\in \aa_+^*$. We denote by $(\gg_h^*)_\mathrm{reg}\subset \gg_h^*$ the union of all regular coadjoint orbits.

One then obtains a $G$-equivariant diffeomorphism
$$
\aa_+^* \times G/MA \stackrel{\sim}{\to} (\gg_h^*)_\mathrm{reg}
$$
given by $(\lambda, g\cdot MA) \mapsto \mathrm{Ad}_g^*(\lambda).$ Note that, for each fixed $\lambda\in \aa_+^*$, this map gives a $G$-equivariant diffeomorphism between the regular coadjoint orbit $\mathcal{O}_\lambda$ and $G/MA$. 

{}

Recall from (\ref{e-moment-symm-space}) the moment map $\mu$ for the Hamiltonian $G$-action on $T^*X$. One obtains, see Paragraph (2.1) of \cite{Lisiecki87}, from 
$G$-equivariance of $\mu$ and from the natural identification $T_{x_o}^*X \cong \ss^*$, where $x_o = e\cdot K$,
$$
\mu(T^*X) = \gg_h^*.
$$

Let $$(T^*X)_{\mathrm{reg}}:=\mu^{-1}((\gg_h^*)_\mathrm{reg})$$ denote the open dense subset of $T^\ast X$ where the moment map takes values in regular coadjoint orbits. Note that $(T^*X)_\mathrm{reg}$ is a multiplicity free Hamiltonian space (that is, the values of the 
moment map $\mu$ separate $G$-orbits) with $G$-orbits being diffeomorphic to $G/M$.

Recall that each point in $(T^*X)_\mathrm{reg}$ can be uniquely represented as $[g,\lambda]\in G\times_K \ss^*$, for some $g\in G$ and $\lambda\in \aa_+^*$.
From Section 4 in \cite{Lisiecki87}, we then have a $G$-equivariant diffeomorphism
\begin{equation}\label{thebigdiffeo}
\varphi: (T^*X)_\mathrm{reg} \stackrel{\sim}{\longrightarrow} X\times \aa_+^*\times B,    
\end{equation}
given by
$$
\varphi([g,\lambda]) := (g\cdot K, \lambda, g\cdot MAN) = (g\cdot K, \lambda, k_1(g) \cdot M).
$$
We have $g\cdot MAN = k_1(g) \cdot M\in B$, where we recall the Iwasawa decomposition $g=k_1(g)e^{H(g)}n_1(g)$ in (\ref{iwasawas}).
The inverse of $\varphi$ can be used to pull-back the symplectic structure from $(T^*X)_\mathrm{reg}$ to $X\times  \aa_+^*\times B$ so that Hamiltonian geometry and geometric quantization of $(T^*X)_\mathrm{reg}$ can be naturally described on $X\times \aa_+^*\times B.$ 
{}

Let $A: X\times B\to \aa$ be defined by
$$
A(x,b) := A(k^{-1}g) = -H(g^{-1}k),
$$
where $x= g\cdot K, b=k\cdot M$ with $k\in K, g\in G.$ (See, Theorem 4.2 in \cite{Lisiecki87}.) Below, we will still use the same notation
$$A:=A\circ \varphi_{X\times B} : (T^*X)_\mathrm{reg}\to \aa,$$ 
where $\varphi_{X\times B}$ is the component of $\varphi$ along $X\times B,$  so that 
\begin{equation}\label{defAxb}
A(x,b) = -H(g^{-1}k_1(g)) = H(g).    
\end{equation}

\subsection{The Fourier polarization for $T^\ast X$}\label{subsfourier}

Geometric quantizations of $T^* X$ preserving symmetries require $G$-invariant polarizations of the symplectic manifold $T^* X$. 

One such polarization, called the vertical polarization which will be denoted by $\mathcal{P}_0$,  is canonically given by the fibers of the cotangent bundle. The corresponding polarized sections are simply the functions factorizing through the bundle projection $\pi:T^* X\to X$, that is functions on $X$. This leads to the Hilbert space of vertically polarized quantum states 
$$\mathcal{H}_{\mathcal{P}_0} \cong L^2(X,dx),$$ 
with $G$ acting by the regular representation.

The goal of this paper is to improve on the current understanding of the description of the (unitary) Fourier-Helgason transform 
$$L^2(X,dx)\to L^2\left(\mathfrak a_+^*\times B, \frac{d\lambda db}{\vert c(\lambda)\vert^2}\right),$$
where $c$ is the Harish-Chandra $c$--function 
(see \cite[Thm. III.1.5]{helgason:2008}), as an intertwiner map between Hilbert spaces of quantum states induced by a change of polarization. 
For this purpose, one is interested in a ``Fourier polarization'' for which the space of leaves is measure-isomorphic to  $\mathfrak a_+^*\times B$. 

In \cite{Lisiecki87}, Lisiecki found such a polarization, which he called ``horizontal" (and which we will also call ``Fourier"). In the case when $G$ is complex, he proved that the Blattner-Kostant-Sternberg (BKS) pairing map between the Hilbert spaces for the quantizations in the vertical and horizontal polarizations is unitary and that it indeed coincides with the Fourier-Helgason transform. 

Our aim is to generalize this result to more general $G$, so that $G$ is  not necessarily complex, by using a ``quantum geodesic transform" instead of the BKS pairing. The idea is to connect the vertical and Fourier polarizations through a continuous family of polarizations generated by an appropriate Hamiltonian flow (in real time). The quantum geodesic transform is an analog of the generalized coherent state transforms, which are associated to Hamiltonian flows in imaginary time, and which have been used in \cite{baier.hilgert.kaya.mourao.nunes:2023} and \cite{baier.ferreira.hilgert.mourao.nunes:2024} to describe families of quantizations for K\"ahler polarizations on cotangent bundles of compact Lie groups and of compact symmetric spaces, respectively.

Let us recall briefly some of the results of Lisiecki. Recall from (\ref{thebigdiffeo}) that the open dense subset $(T^*X)_\mathrm{reg}\subset T^*X$ is $G$-equivariantly diffeomorphic to $X\times  \aa_+^*\times B$ so that quantizations can be described on the latter.

{}

\begin{proposition}[Lisiecki, Proposition (3.3) \cite{Lisiecki87}]\label{prop:Fourier-polarisation}
Let $$\widehat \pi = p_{\aa_+^*\times B} \circ \varphi \, : \, (T^*X)_\mathrm{reg} \longrightarrow \aa_+^* \times B,$$
where $p_{\aa_+^*\times B}:X\times  \aa_+^*\times B \to \aa_+^*\times B$ is the natural projection. 

The fibers of $\hat \pi$ define a Lagrangian foliation of $(T^*X)_\mathrm{reg}$. The corresponding polarization, $\mathcal{P}_\mathrm{F}$, is called the ``horizontal" or ``Fourier" polarization.
\end{proposition}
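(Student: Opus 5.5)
We need to show that the fibers of $\widehat\pi$ are Lagrangian submanifolds of $(T^*X)_\mathrm{reg}$. The plan has three steps. First, show that $\widehat\pi$ is a submersion, so that the fibers form a smooth foliation. Second, note that the fibers have dimension $\dim X$, which is half of $\dim T^*X$. Third, show that the fibers are isotropic; this is the real content.

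\textbf{Submersion.} Through $\varphi$ the map $\widehat\pi$ is the projection $X\times\aa_+^*\times B\to\aa_+^*\times B$, so it is a submersion with fibers $X\times\{(\lambda,b)\}$. Hence the fibers have dimension $\dim X = \tfrac12\dim T^*X$.

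\textbf{Reduction to a single fiber.} By $G$-equivariance of $\varphi$, and since $G$ acts transitively on $B$, it suffices to treat fibers over points $(\lambda,b_o)$. Such a fiber is
\[
L_\lambda=\{[g,\lambda']:\ g\cdot MAN=b_o,\ \lambda'=\lambda\}.
\]
Write $g=n\,a\,k$ with $n\in N$, $a\in A$, $k\in K$. Then $[g,\lambda]=[na,\mathrm{Ad}^*_{k^{-1}}\cdots]$, and the condition $g\in MAN\cdot K$ with $\xi=\lambda$ gives the concrete description
\[
L_\lambda=\{[an,\lambda]:\ a\in A,\ n\in N\}.
\]
More precisely, in $G\times_K\ss^*$ the fiber is the image of $AN\times\{\lambda\}$, because $MAN\cdot K$ acts on $[e,\lambda]$ and $M$ fixes $\lambda$. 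Since $AN$ acts simply transitively on $X$, this gives a section over $X$ of dimension $\dim X$, consistent with the dimension count.

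\textbf{Isotropy.} Use the reduction $T^*X=\mu_K^{-1}(0)/K$. Under left trivialization the canonical one-form on $T^*G$ is $\theta_{(g,\xi)}(v)=\xi(g^{-1}\cdot dg(v))$. It descends, so we may compute on the lift $S=\{(an,\lambda)\}\subset G\times\ss^*$. Along $S$ the component $\xi=\lambda$ is constant, so
\[
\theta|_S=\lambda\big((an)^{-1}d(an)\big),
\]
that is, the pullback of the left Maurer--Cartan form of $AN$ paired with $\lambda$. Here $\lambda\in\aa^*$ is extended to $\gg$ by zero on $\kk$, on $\nn$-components and on the other root spaces, using the Killing-form identification. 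Write $(an)^{-1}d(an)=\mathrm{Ad}_{n^{-1}}(a^{-1}da)+n^{-1}dn$. The term $n^{-1}dn$ takes values in $\nn$, which $\lambda$ kills. The other term is $a^{-1}da+(\nn\text{-valued})$, because $\mathrm{Ad}_{n^{-1}}H\in H+\nn$ for $H\in\aa$. Hence
\[
\theta|_S=\lambda(a^{-1}da)=d\big(\lambda(\log a)\big),
\]
which is exact, and therefore $\omega|_S=-d\theta|_S=0$.

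\textbf{Main obstacle.} The delicate step is the identification of $\ss^*$, and the element $\lambda$, inside $\gg^*$ so that $\lambda$ annihilates $\kk\oplus\nn$ appropriately. Concretely, $\lambda$ extended by zero on $\kk$ and on $\ss\ominus\aa$ must kill $\nn$. This holds because $\nn\subset\kk\oplus(\ss\ominus\aa)$: each root vector $X_\alpha$ decomposes as $X_\alpha+\sigma X_\alpha$ plus $X_\alpha-\sigma X_\alpha$, both lying in $\bigoplus(\gg_\alpha+\gg_{-\alpha})$, which is orthogonal to $\aa$.

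With isotropy and half dimension, the fibers are Lagrangian, and $\mathcal P_\mathrm F$ is a Lagrangian foliation.
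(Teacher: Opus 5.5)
Your proof is correct and follows the route the paper indicates: reduce by $G$-equivariance and transitivity of $G$ on $B$ to the fiber over $(\lambda,b_o)$, then check explicitly that this fiber is Lagrangian. You do the check by showing that the Liouville form restricts to $d\langle\lambda,\log a\rangle$ on $\{[an,\lambda]\}$, which matches Lisiecki's phase $S=\langle\lambda,A(x,b)\rangle$ since $A(anK,b_o)=\log a$.

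One small fix to your description of the fiber: the condition is $g\in MAN$, not $g\in MAN\cdot K$ (which is all of $G$). The $M$-factor is then absorbed because $M$ normalizes $N$ and fixes $\lambda$, so $[man,\lambda]=[a\,mnm^{-1},\lambda]$.
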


Note that on the symplectic manifold $X\times  \aa_+^*\times B$, whose symplectic structure is induced by $\varphi$ from the canonical symplectic structure on $(T^*X)_\mathrm{reg}$, the vertical and horizontal polarizations are simply given by the projections
$X\times \aa_+^*\times B\to X$ and $X\times \aa_+^*\times B\to \aa_+^*\times B,$ respectively. 

Since $G$ acts on $T^*X$ by symplectomorphisms, and since $\widehat\pi$ is $G$-equivariant, Proposition \ref{prop:Fourier-polarisation} can be proved by checking explicitly that the fiber of $\widehat\pi$ over $(\lambda,b_o), \lambda\in \aa_+^*$, is Lagrangian. (See \cite{Lisiecki87}.)

Consider now the function
\begin{eqnarray}\nonumber
S: X\times \aa_+^*\times B &\to& \mathbb{R}\\
\label{defS}
(x,\lambda, b) &\mapsto& \langle \lambda, A(x,b)\rangle,    
\end{eqnarray}
where $A(x,b)$ is given in (\ref{defAxb}). One obtains

\begin{theorem}[Lisiecki, Paragraph 6 \cite{Lisiecki87}]\label{lisieckithm} The Hilbert space of $\mathcal{P}_\mathrm{F}$-polarized quantum states is
$$
\mathcal{H}_{\mathcal{P}_F} \cong L^2\left (\aa_+^*\times B, \frac{d\lambda db}{\vert c(\lambda)\vert^2}\right),
$$   
where $\mathcal{P}_\mathrm{F}$-polarized functions have the form 
$$
f e^{iS}, 
$$
for $f\in L^2\left (\aa_+^*\times B, \frac{d\lambda db}{\vert c(\lambda)\vert^2}\right)$and $S$ in (\ref{defS}).
\end{theorem}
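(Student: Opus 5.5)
The plan is to work throughout in the model $X\times\aa_+^*\times B$ of $(T^*X)_\mathrm{reg}$ provided by $\varphi$ in (\ref{thebigdiffeo}), with the trivial prequantum line bundle. Two things are needed. First, a symplectic potential $\theta$ adapted to $\mathcal{P}_\mathrm{F}$ is identified in terms of $S$, and from it the $\mathcal{P}_\mathrm{F}$-polarized sections are shown to be exactly $f\,e^{iS}$ with $f$ a function on $\aa_+^*\times B$. Second, the half-form correction is analysed to produce the measure $d\lambda\,db/|c(\lambda)|^2$.

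\textbf{Step 1: the symplectic form in these coordinates.} I would pull back the Liouville form $\theta_0$ of $T^*X$ along $\varphi^{-1}$ and show that
\[
\theta_0 = d_X S,
\]
the differential of $S(x,\lambda,b)=\langle\lambda,A(x,b)\rangle$ in the $X$-directions only. By $G$-equivariance it suffices to check this at $x=x_o$. There a point $[k,\lambda]$ is the covector $\mathrm{Ad}^*_k\lambda\in\ss^*$, with $b=k\cdot M$. On the other side, $d_xA(x_o,b)$ applied to $Y\in\ss$ equals the derivative of $H(e^{tY}k)$ at $t=0$, which is $\pi_\aa(\mathrm{Ad}_{k^{-1}}Y)$. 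Pairing with $\lambda$ gives $\langle \mathrm{Ad}^*_k\lambda, Y\rangle$, because $\lambda$ vanishes on $\kk+\nn$. This is the classical identity for the gradient of the horocycle distance.

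Consequently
\[
\omega=d\theta_0=d(dS - d_{\aa^*\times B}S)=-d\,\theta_\mathrm{F},\qquad \theta_\mathrm{F}:=d_{\aa^*\times B}S .
\]
The form $\theta_\mathrm{F}$ vanishes on $\mathcal{P}_\mathrm{F}$, since $\mathcal{P}_\mathrm{F}$ is tangent to the $X$-fibres. This recovers the Lagrangian property in Proposition \ref{prop:Fourier-polarisation}. Also $\theta_0$ and $-\theta_\mathrm{F}$ differ by the exact form $dS$.

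\textbf{Step 2: polarized sections.} In the gauge given by $\theta_0$, the condition $\nabla_\xi s=0$ for $\xi\in\mathcal{P}_\mathrm{F}$ reads $\xi(s)-i\theta_0(\xi)s=0$. Writing $s=f e^{iS}$, this reduces to $\xi(f)=0$, because $\xi(S)=\theta_0(\xi)$ for vertical-in-$X$ vectors $\xi$. So $f$ depends only on $(\lambda,b)$.

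\textbf{Step 3: the half-form correction and the measure.} This is the main obstacle. The half-form bundle of $\mathcal{P}_\mathrm{F}$ is identified with half-densities on the leaf space $\aa_+^*\times B$. The task is then to find which $G$-equivariant half-density makes the pairing with $L^2(X)$ unitary, or equivalently which density is induced by the Liouville measure of $\omega$ restricted against $dx$.

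I would compute the Liouville volume $\omega^n/n!$ in the coordinates $(x,\lambda,b)$ at $x=x_o$. This amounts to computing the Jacobian of the map $(\lambda,k M)\mapsto \mathrm{Ad}_k^*\lambda$ from $\aa_+^*\times K/M$ to $\ss^*$. That Jacobian is the polynomial $\prod_{\alpha\in\Sigma^+}|\langle\alpha,\lambda\rangle|^{m_\alpha}$, so
\[
\omega^n/n! = \prod_{\alpha\in\Sigma^+}|\langle\alpha,\lambda\rangle|^{m_\alpha}\,dx\,d\lambda\,db .
\]

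This polynomial density is only the semiclassical, large-$\lambda$ asymptotic of $|c(\lambda)|^{-2}$. The exact Plancherel density cannot come from the symplectic volume alone. Following Lisiecki's Paragraph 6, I would therefore fix the inner product on $\mathcal{H}_{\mathcal{P}_\mathrm{F}}$ by requiring $G$-invariance and compatibility with the pairing to $\mathcal{H}_{\mathcal{P}_0}$. The $G$-invariant inner products on functions on $\aa_+^*\times B$ of the form $f e^{iS}$ are $\int |f|^2 \rho(\lambda)\,d\lambda\,db$, where $\rho$ is arbitrary because $G$ acts fibrewise over $\aa_+^*$ by the principal series. The choice $\rho=|c(\lambda)|^{-2}$ is then dictated by Plancherel, i.e.\ by Helgason's inversion theorem \cite[Thm. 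III.1.5]{helgason:2008}.

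I expect the real difficulty to lie in justifying this normalization intrinsically from the half-form data rather than by fiat. The $G$-invariance argument suffices for the statement as an identification of Hilbert spaces up to the choice of invariant measure.
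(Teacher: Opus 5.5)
The paper gives no argument for this statement; it imports it from Lisiecki (his Theorem 4.2 for the phase, \S 6.3 for the Hilbert space). Your Steps 1 and 2 correctly reconstruct the first part. At $x_o$, the identity $\theta_0=d_XS$ follows from $\lambda|_{\kk+\nn}=0$ exactly as you say. The reduction to $x_o$ also needs $d_XS$ to be $G$-invariant. This comes from the cocycle identity (\ref{e_32}): $S(gx,\lambda,gb)-S(x,\lambda,b)=\langle\lambda,A(gK,gb)\rangle$ is independent of $x$. Hence the $\mathcal{P}_\mathrm{F}$-polarized sections are exactly $fe^{iS}$ with $f=f(\lambda,b)$.

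Step 3, however, puts the difficulty in the wrong place. In half-form quantization of a fibrating real polarization, the inner product on $\mathcal{H}_{\mathcal{P}_\mathrm{F}}$ is not something you fix by $G$-invariance or by compatibility with $\mathcal{H}_{\mathcal{P}_0}$; it is intrinsic. Leafwise-constant half-forms are square roots of pull-backs of densities $\nu$ on the leaf space, $|fe^{iS}|^2=|f|^2$ is constant on leaves, and $\|fe^{iS}\otimes\sqrt{\nu}\|^2=\int_{\aa_+^*\times B}|f|^2\,\nu$. So $\mathcal{H}_{\mathcal{P}_\mathrm{F}}$ is canonically the $L^2$ space of half-densities on $\aa_+^*\times B$. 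The displayed isomorphism is just $f\mapsto fe^{iS}\otimes\sqrt{d\lambda\,db/|c(\lambda)|^2}$, the half-form of Definition~\ref{defU}. The weight $|c(\lambda)|^{-2}$ is only a choice of reference half-density, and the statement follows from Steps 1--2 with no appeal to Plancherel. Choosing the inner product ``by fiat'' to match Plancherel would replace the intrinsic one, and it is circular relative to the paper's aim.

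Moreover, you assert that the half-density making the pairing with $L^2(X)$ unitary is the one induced by the Liouville measure against $dx$. This fails for noncomplex $G$ in general. The Liouville measure enters only the BKS pairing, which sends $f\otimes\sqrt{dx}$ to a multiple of $P(\lambda)^{1/2}\,\widetilde f\,e^{iS}\otimes\sqrt{d\lambda\,db}$, where $P(\lambda)=\prod_{\alpha\in\Sigma^+}|\langle\alpha,\lambda\rangle|^{m_\alpha}$. This map is unitary only if $P\propto|c|^{-2}$, which holds for complex $G$ but not in general. That is precisely Lisiecki's negative result, which the paper gets around via Theorem~\ref{ThQG}, not via this statement. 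Finally, your Liouville density is missing the factor $e^{2\rho(A(x,b))}$ required by $G$-invariance (cf.\ Lemma~\ref{lemmadb} and Proposition~\ref{PP1}); it is correct only over $x_o$, where $A=0$.
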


\section{Geodesic flow and a family of polarizations on $T^*X$}\label{sec3}

\subsection{Strategy}

Polarizations on symplectic manifolds  evolve naturally under push-forward by Hamiltonian flows. In the cases of cotangent bundles of compact Lie groups, or of symmetric spaces of compact type, as well as for other families of symplectic manifolds, in fact, Hamiltonian flows generated by Casimir functions on the Lie algebra, can be used to describe very interesting continuous families of quantizations. In these examples, the analytic continuation to imaginary time of these Hamiltonian flows allows for a continuous interpolation between the quantization in the vertical polarization and interesting polarizations of mixed type, with rich geometric properties and relations to representation theory. 
In this way, one obtains, for example, a geometric interpretation of the Peter-Weyl theorem by relating it to the Borel-Weil realization of irreducible representations of a compact Lie group in terms of holomorphic line bundles over coadjoint orbits. In these examples, the path of polarizations, induced by the Hamiltonian flow in imaginary time, corresponds to a geodesic in the space of K\"ahler metrics, with the mixed polarization corresponding to the limit of infinite geodesic time. Over this path of polarizations one can consider the corresponding ``quantum" bundle of Hilbert spaces of polarized (or quantum)  states. The so-called generalized coherent state transform, of which the Segal-Bargmann transform is the paradigmatic example, provides the natural intertwining map between the Hilbert spaces for the different quantizations along the family.\footnote{In some examples, the gCST coincides with the BKS pairing map but this is not usually the case.}
(See \cite{baier.hilgert.kaya.mourao.nunes:2023, baier.ferreira.hilgert.mourao.nunes:2024}.) 

In the present case, as will be described below, due to the fact that $X=G/K$ is of noncompact type, this strategy needs to be adapted. In fact, in the present case, one needs to consider Hamiltonian flows in real-time. We will consider, then, the Hamiltonian flow generated by the quadratic Casimir
\begin{equation}\label{hamil}
    h: T^\ast X \longrightarrow \mathbb{R}, \quad h:= |\mu|^2/2,
\end{equation}
where $\mu$ is the moment map for the Hamiltonian $G$-action in (\ref{e-moment-symm-space}). This coincides with the usual ``energy function" defined on $T^*M$, for a Riemannian manifold $M$, and whose Hamiltonian flow (with respect to the canonical symplectic structure on $T^*M$) coincides with the geodesic flow (under the identification $TM\cong T^*M$ induced by the Riemannian structure).

Therefore, the Hamiltonian flow of the Hamiltonian vector field of $h$, $X_h$, is the geodesic flow for the $G$-invariant metric on $X$. It is given by 
\begin{equation}\label{hamflow}
    \varphi_t^{X_h}([g,\xi]) = [g\mathrm{e}^{t\Tilde{\xi}}, \xi],
\end{equation}
where $\Tilde{\xi}\in \mathfrak{g}$ is the image of $\xi \in \mathfrak{g}^\ast$ under the isomorphism given by the Killing form.

The geodesic flow on $(T^*X)_\mathrm{reg}$ becomes
\begin{equation}
\label{e_15}
 \eta_t:=   \varphi_t^{X_h}([g,\lambda_H]) = [g\mathrm{e}^{t{H}}, \lambda_H],  \, t\in \mathbb{R},
\end{equation}
with $H \in \mathfrak{a}_+$ and where $\lambda_H \in \aa_+^*$ is the corresponding element under the identification $\gg \cong \gg^*$, using the Killing form so that the map $H \mapsto \lambda_H $ is the inverse of $\xi \mapsto \Tilde \xi$, $\Tilde \lambda_H = H$. 

\begin{remark}
    {Recall that, associated with the inclusions $\aa\hookrightarrow \ss\hookrightarrow\gg\hookleftarrow\kk$ we have the restriction maps $\aa^*\leftarrow \ss^*\leftarrow\gg^*\rightarrow\kk^*$. Using the Killing form $\kappa$ , we get on each level the isomorphism $\mathfrak h^*\rightarrow \mathfrak h, \xi\mapsto\tilde \xi=H_\xi$ with inverse $\mathfrak h\rightarrow \mathfrak h^*, H\mapsto\lambda_H$ via
\[\kappa(\tilde\xi,H)=\kappa(H_{\xi},H)= \xi(H)\quad\text{and}\quad \kappa(H,H')=\lambda_H(H') .\] 
These isomorphisms yield splittings for the restrictions, so we get injections $\aa^*\hookrightarrow \ss^*\hookrightarrow\gg^*\hookleftarrow\kk^*$ commuting with the isomorphisms.}
\end{remark}

We set  
$$x_t := \pi (\eta_t) = ge^{t{H}}\cdot  K\in X. $$ 

Below, we will first show that the geodesic flow continuously interpolates between the vertical polarization $\mathcal{P}_0$, at $t=0$, and the Fourier polarization $\mathcal{P}_\mathrm{F}$ of Proposition \ref{prop:Fourier-polarisation} in the limit $t\to +\infty$. We will then use a real-time analog of the generalized coherent state transform, which we call the ``quantum geodesic transform", to lift the flow to the bundle of Hilbert spaces of quantum states, showing that in the limit $t\to +\infty$ we obtain the unitary Fourier-Helgason transform.

{}

\subsection{Geodesic flow from the vertical to the Fourier polarization}
\label{sectionevolutionofpol}

Let us now study the evolution of the vertical polarization under the Hamiltonian flow generated by $h$.

A $\mathcal{P}_0$-polarized function is a function on $X$ (pulled-back to $T^\ast X$)
\begin{equation*}
    (\pi^\ast\, f)([g,\xi]) = f(g\cdot K),\, f\in C^\infty (X).
\end{equation*}
Note that, from the definition of the canonical symplectic form on $T^*X,$ the polarization $\mathcal{P}_0$ is generated by the Hamiltonian vector fields of smooth $\mathcal{P}_0$-polarized functions. Then we have

\begin{proposition}\label{timeevolution}
    On $(T^*X)_\mathrm{reg}$, the polarization $\mathcal{P}_t:= (\varphi_{-t}^{X_h})_\ast\mathcal{P}_0$ is generated by the functions 
    \begin{equation*}
        F_t([g,\lambda_H]) := f(g\mathrm{e}^{tH}\cdot K) , \, \, f\in C^\infty(X).
    \end{equation*}
\end{proposition}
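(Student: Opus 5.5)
The plan is to use only that symplectomorphisms transport polarizations together with their generating functions. The vertical polarization $\mathcal{P}_0$ is generated by the Hamiltonian vector fields $X_{\pi^*f}$, $f\in C^\infty(X)$, since these span the fibers of $\pi$ pointwise. For a symplectomorphism $\psi$ and a function $F$ one has $\psi_*X_F = X_{F\circ\psi^{-1}}$. Applying this with $\psi=\varphi^{X_h}_{-t}$ gives $\psi^{-1}=\varphi^{X_h}_{t}$, so $(\varphi_{-t}^{X_h})_*X_{\pi^*f}=X_{(\pi^*f)\circ\varphi_t^{X_h}}$. Hence $\mathcal{P}_t$ is spanned pointwise by the Hamiltonian vector fields of the functions $(\pi^*f)\circ\varphi^{X_h}_t$, $f\in C^\infty(X)$.

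Next I evaluate these functions on $(T^*X)_\mathrm{reg}$ using the explicit geodesic flow (\ref{e_15}). Every point there is uniquely of the form $[g,\lambda_H]$ with $H\in\aa_+$. By (\ref{e_15}), $\varphi_t^{X_h}([g,\lambda_H])=[ge^{tH},\lambda_H]$, so $(\pi^*f)\circ\varphi^{X_h}_t([g,\lambda_H])=f(ge^{tH}\cdot K)=F_t([g,\lambda_H])$, which is the claimed formula.

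Two points need care. First, $F_t$ must be well defined and smooth on $(T^*X)_\mathrm{reg}$, independent of the representative $[g,\lambda_H]$. The representatives of a point with $\lambda\in\aa_+^*$ differ by $g\mapsto gm$ with $m\in M$, because the stabilizer in $K$ of a regular element of $\aa$ is $M$. Since $m$ commutes with $e^{tH}$, we get $gme^{tH}K=ge^{tH}mK=ge^{tH}K$, so $F_t$ is well defined. Smoothness follows because $F_t$ is the composition of smooth maps, or directly from the formula in general coordinates $[g,\xi]\mapsto f(ge^{t\tilde\xi}K)$. Second, $(T^*X)_\mathrm{reg}$ must be invariant under the flow so that the restriction makes sense. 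This holds because the flow preserves $\mu$: $h$ is $G$-invariant, so $\mu$ is conserved. Concretely, $\Ad^*_{ge^{tH}}\lambda_H=\Ad^*_g\lambda_H$.

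The main obstacle, though a mild one, is making precise that ``generated by'' is preserved: the Hamiltonian vector fields of $F_t$ span $\mathcal{P}_t$ at every point. This is immediate from the pointwise statement for $\mathcal{P}_0$, because $d\varphi$ is a linear isomorphism at each point.
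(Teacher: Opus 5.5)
Your proposal is correct and follows the same route as the paper. The paper's proof is just the one-line computation $((\varphi_t^{X_h})^*\pi^*f)([g,\lambda_H])=f(ge^{tH}\cdot K)$, combined with the prior remark that $\mathcal{P}_0$ is generated by the Hamiltonian vector fields of the functions $\pi^*f$. Your extra checks are correct details the paper leaves implicit: the transport rule $\psi_*X_F=X_{F\circ\psi^{-1}}$, well-definedness under $g\mapsto gm$ with $m\in M$, and flow-invariance of $(T^*X)_\mathrm{reg}$.
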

\begin{proof}
    We have that 
\begin{align*}      ((\varphi_t^{X_h})^\ast\pi^\ast f)([g,\lambda_H])  &=  f( \pi (\varphi_t^{X_h}([g,\lambda_H])))  = f(\pi([g\mathrm{e}^{tH}, \lambda_H]))
          = f(g\mathrm{e}^{tH}\cdot K).
\end{align*}

\end{proof}

Let us now show that  when $t\to +\infty$, with the limit being taken pointwise in the Lagrangian Grassmannian bundle on $(T^*X)_\mathrm{reg}$, there exists a limit polarization and that it coincides with the Fourier polarization in Proposition \ref{prop:Fourier-polarisation}, that is
\begin{equation}\label{eq:P-Pl}
    \exists \lim_{t\rightarrow\infty}\mathcal{P}_t = \mathcal{P}_\mathrm{F}.
\end{equation}

Recall that a regular coadjoint orbit $\mathcal{O}_\lambda, \lambda\in \aa_+^*$, is a symplectic manifold with canonical symplectic structure given by the Kostant-Kirillov-Souriau form. One defines a real polarization $\mathcal{P}_{\mathcal{O}_\lambda}$ on $\mathcal{O}_\lambda$ to be the $G$-invariant real polarization of $\mathcal{O}_\lambda$ defined by the fibers of the canonical projection 
$$
\mathcal{O}_\lambda \cong G/MA \to B=G/MAN,
$$
as in Paragraph 3 in \cite{Lisiecki87}. More precisely, for $\lambda\in \aa_+^*,$ let
$$
\alpha_\lambda : G/MA \to \mathcal{O}_\lambda
$$
be the $G$-equivariant diffeomorphism given by
\begin{equation}\label{alphalambda}
\alpha_\lambda (g\cdot MA) := \mathrm{Ad}_g^* \lambda.    
\end{equation}

From the Bruhat decomposition $\gg = \bar\nn + \mm + \aa+\nn,$ we have an identification 
$$
T_{e\cdot MA}(G/MA) \cong \bar\nn + \nn.
$$
Let $\mathcal{D}\subset T(G/MA)$ be the half-dimensional $G$-invariant distribution given at the identity coset by 
    $$
    \mathcal{D}_{e\cdot MA} := \nn.
    $$
Denote by $\overline{\mathcal{D}}\subset T(G/MA)$ the complementary $G$-invariant distribution such that 
$$
    \overline{\mathcal{D}}_{e\cdot MA} := \bar\nn.
$$

Then the polarization $\mathcal{P}_{\mathcal{O}_\lambda}$ can be obtained as follows.

{}

\begin{lemma}\label{lemmadistribution}
    For $\lambda\in \aa_+^*$
    $$
    P_{\mathcal{O}_\lambda} = (\alpha_\lambda)_* \mathcal{D}.
    $$
\end{lemma}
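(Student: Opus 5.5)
Both $\mathcal{P}_{\mathcal{O}_\lambda}$ and $(\alpha_\lambda)_*\mathcal{D}$ are $G$-invariant distributions on $\mathcal{O}_\lambda$. The first is invariant because it is defined by the fibers of the $G$-equivariant projection $\mathcal{O}_\lambda\cong G/MA\to G/MAN$. The second is invariant because $\mathcal{D}$ is $G$-invariant and $\alpha_\lambda$ is $G$-equivariant. It therefore suffices to compare them at the single point $\lambda=\alpha_\lambda(e\cdot MA)$.

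We first compute the tangent space to the fiber of $G/MA\to G/MAN$ through $e\cdot MA$. This fiber is the orbit $MAN\cdot MA = N\cdot MA$. Its tangent space at $e\cdot MA$ is the image of $\mm+\aa+\nn$ in $\gg/(\mm+\aa)$. Under the identification $T_{e\cdot MA}(G/MA)\cong\bar\nn+\nn$ coming from the Bruhat decomposition, this image is exactly $\nn$.

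Next we identify the fibers of $\mathcal{O}_\lambda\to B$ with those of $G/MA\to G/MAN$. Since $\alpha_\lambda$ is a $G$-equivariant diffeomorphism, the fiber of $\mathcal{O}_\lambda\to B$ through $\lambda$ is $\alpha_\lambda(N\cdot MA)=\{\mathrm{Ad}^*_n\lambda : n\in N\}$. Its tangent space is therefore $d\alpha_\lambda(\nn)=\{\mathrm{ad}^*_Y\lambda : Y\in\nn\}=(\alpha_\lambda)_*\mathcal{D}_{e\cdot MA}$. Thus the two distributions agree at $\lambda$, and by $G$-invariance they agree everywhere.

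The one point needing care is well-definedness, i.e.\ that $\mathcal{D}$ really is a $G$-invariant distribution on $G/MA$. This requires $\nn$ to be $\mathrm{Ad}(MA)$-stable modulo $\mm+\aa$, which holds because $MA$ normalizes $\nn$; the same argument applies to $\bar\nn$. As a side check of consistency with Lisiecki's description, the polarization is Lagrangian. Indeed, the KKS form at $\lambda$ is $\omega(Y,Z)=\lambda([Y,Z])$, and for $Y,Z\in\nn$ we have $[Y,Z]\in\nn$, which is orthogonal to $\aa$ under the Killing form, so $\lambda$ vanishes on it. Moreover $\dim\nn=\frac12\dim(\bar\nn+\nn)$. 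Since this Lagrangian property is part of what is asserted by the reference, it is not a separate obstacle: the proof is essentially the tangent-space identification at the base point.
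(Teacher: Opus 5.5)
Your proof is correct, but it takes a different route from the paper's.

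You work directly from the definition of $\mathcal{P}_{\mathcal{O}_\lambda}$ as the tangent distribution to the fibers of $\mathcal{O}_\lambda\cong G/MA\to G/MAN$. After reducing to the base point by $G$-invariance, the lemma becomes the observation that the fiber through $e\cdot MA$ is $N\cdot MA$, whose tangent space is $(\mm+\aa+\nn)/(\mm+\aa)\cong\nn$.

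The paper instead imports Lisiecki's coadjoint description (his (3.1.2)): at $\lambda$, $\mathcal{P}_{\mathcal{O}_\lambda}$ is the annihilator $(\mm+\aa+\nn)^\perp\subset\gg^*$. It then checks that the derivative of $t\mapsto\mathrm{Ad}^*_{e^{t\tilde\xi}}\lambda$ at $t=0$ pairs with $\tilde\mu$ to give $-\lambda([\tilde\xi,\tilde\mu])$. This vanishes for $\tilde\xi\in\nn$ and $\tilde\mu\in\mm+\aa+\nn$, because $[\tilde\xi,\tilde\mu]\in\nn$ and $\lambda\in\aa^*$ vanishes on $\nn$. Equality follows from the unstated dimension count $\dim\nn=\dim\gg-\dim(\mm+\aa+\nn)$.

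Your version is self-contained and needs no dimension argument. On its own, however, it is nearly tautological. It does not connect $(\alpha_\lambda)_*\mathcal{D}$ to Lisiecki's $\gg^*$-description, and that description is what certifies that the distribution is a polarization.

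Your Lagrangian side check is in fact the paper's key computation in disguise. Extend it from $Z\in\nn$ to $Z\in\mm+\aa+\nn$, using $[\nn,\mm+\aa]\subseteq\nn$. This gives exactly $d\alpha_\lambda(\nn)\subseteq(\mm+\aa+\nn)^\perp$, i.e. the paper's statement.
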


\begin{proof}
From equation (3.1.2) in \cite{Lisiecki87}, we have that $\mathcal{P}_{\mathcal{O}_\lambda}$ is the $G$-invariant polarization on $\mathcal{O}_\lambda$ which at $\lambda\in \mathcal{O}_\lambda$ is given by 
the subspace 
$$
(\mm+\aa+\nn)^\perp = \left\{ \xi\in \gg^*: \xi (\tilde \mu)=0, \forall \tilde\mu\in \mm+\aa+\nn \right\}\subset \gg^*.
$$ 
Let $\tilde\xi\in \nn, \epsilon >0$, and consider the curve $\gamma(t) = e^{t\tilde\xi}\cdot MA, \, t\in (-\epsilon,\epsilon)$, through the identity coset in $G/MA$. Then,
$$
\langle \frac{d}{dt}_{\vert_{t=0}} \mathrm{Ad^*_{e^{t\tilde \xi}}\lambda, \tilde\mu}\rangle =  \langle \lambda, \frac{d}{dt}_{\vert_{t=0}}\mathrm{Ad}_{e^{-t\tilde\xi}}\tilde\mu\rangle = \langle \lambda, -[\tilde\xi, \tilde \mu]\rangle,\, \tilde\mu\in \gg.
$$   
Since $\tilde \xi\in \nn,$ this is zero if $\tilde\mu\in \mm+\aa+\nn,$ so that at $e\cdot MA$ indeed
$$
(\alpha_\lambda)_* \nn = (\mm+\aa+\nn)^\perp.
$$
\end{proof}

Therefore, for every $\lambda\in \aa_+^*,$ the polarization $\mathcal{P}_{\mathcal{O}_\lambda}$ on $\mathcal{O}_\lambda$ can be obtained by push-forward from a fixed $G$-invariant distribution on $G/MA$.

Let $\hat\aa_+^*$ be the set of $\nu\in \aa_+^*$ which occur as highest weights of finite dimensional irreducible spherical representations of $G$ in the sense of \cite[Thm.~V.4.1]{helgason:1984}. We denote the representation of $G$ corresponding to $\nu$ by $(\pi_\nu,\mathcal{H}_\nu)$ and fix a highest weight vector $v_\nu\in\mathcal H_\nu$. The derived representation of $\pi_\nu$ extends to $\gg_\CC$, which in turn integrates to a representation $\hat \pi_\nu$ of the simply connected complex Lie group $\hat G_\CC$ with Lie algebra $\gg_\CC$. We may assume that this representation is unitary on the maximal compact subgroup $U=\exp(\kk+i\ss)$ of $\hat G_\CC$. Thus, the derived representation is skew-Hermitian on $\kk+i\ss$. Hence, the restriction of $\pi_\nu$ from $G$ to $K$ is unitary.

\begin{lemma}\label{onenewlemma}
    Let $\nu\in \hat \aa_+^*$. For any $v\in \mathcal{H}_\nu$, the functions on $G$
    \begin{equation}
        H_v^\nu(g) := \langle v, \pi_{\nu}(g)v_\nu \rangle \label{eq:func_H},
    \end{equation}
 satisfy the following properties:
 \begin{itemize}
     \item[(i)] They are $A$-equivariant, that is  for all $a\in \aa$, 
    $$\langle v, \pi_\nu(ge^a)v_\nu\rangle = \mathrm{e}^{\langle \nu,a \rangle}\langle v, \pi_\nu(g)v_\nu \rangle$$ 
     \item[(ii)] They are $MN$-invariant, that is, for all $m\in M, n\in N$
     $$\langle v, \pi_\nu(gmn)v_\nu\rangle = \langle v, \pi_\nu(g)v_\nu\rangle. $$
 \end{itemize}
\end{lemma}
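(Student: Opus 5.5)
The plan is to reduce both properties to the action of $MAN$ on the highest weight vector $v_\nu$. Since $\pi_\nu(ge^a)v_\nu=\pi_\nu(g)\pi_\nu(e^a)v_\nu$, and similarly for $gmn$, it suffices to show
\[
\pi_\nu(e^a)v_\nu=e^{\langle\nu,a\rangle}v_\nu, \qquad \pi_\nu(m)v_\nu=v_\nu, \qquad \pi_\nu(n)v_\nu=v_\nu
\]
for $a\in\aa$, $m\in M$, $n\in N$. Then (i) and (ii) follow by taking the inner product with $v$, which is linear in the second slot as written.

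For $A$ and $N$ I would work at the Lie algebra level. By the definition of highest weight in the sense of \cite[Thm.~V.4.1]{helgason:1984}, the vector $v_\nu$ is a weight vector for $\aa$ with weight $\nu$, so $d\pi_\nu(a)v_\nu=\langle\nu,a\rangle v_\nu$. Exponentiating gives $\pi_\nu(e^a)v_\nu=e^{\langle\nu,a\rangle}v_\nu$.

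Next, each root space $\gg_\alpha$ with $\alpha\in\Sigma^+$ sends the weight space of $\nu$ into that of $\nu+\alpha$. This space is zero because $\nu$ is highest, so $d\pi_\nu(\nn)v_\nu=0$. Since $N=\exp\nn$ is connected (indeed $\exp:\nn\to N$ is onto), we get $\pi_\nu(n)v_\nu=v_\nu$.

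The main obstacle is $M$, since $M$ need not be connected. I would use the characterization in Helgason's Theorem V.4.1: a highest weight representation is spherical exactly when $\mm$ annihilates $v_\nu$ \emph{and} $M$ fixes $v_\nu$. For a self-contained argument, I would proceed as follows.

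\begin{itemize}
\item Because $M$ centralizes $\aa$, it preserves the $\nu$-weight space. That weight space is one-dimensional, being the highest restricted weight space of a spherical representation (again by Theorem V.4.1).
\item Hence $M$ acts on $v_\nu$ by a character $\chi$, which is unitary because $\pi_\nu|_K$ is unitary.
\item Let $w\ne0$ be a $K$-fixed vector. Its component along $v_\nu$ is nonzero, by Helgason's lemma that $\langle w,v_\nu\rangle\neq0$. Since $M\subset K$ fixes $w$ and preserves the weight space decomposition, that component is $M$-fixed. Therefore $\chi\equiv1$.
\end{itemize}

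Combining the three cases gives (i) and (ii).
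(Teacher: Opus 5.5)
Your proposal is correct and follows the paper's proof. The paper treats the $A$-equivariance and $N$-invariance of $v_\nu$ as immediate and takes the $M$-invariance directly from Helgason's Theorem~V.4.1(i), exactly as your main line does. Your supplementary argument for $M$ is also sound; even the one-dimensionality of the top restricted weight space can be had without quoting V.4.1, since any nonzero $u$ there satisfies $\pi_\nu(kan)u=e^{\nu(\log a)}\pi_\nu(k)u$, hence spans $\mathcal{H}_\nu$ under $\pi_\nu(K)$ and therefore pairs nontrivially with the $K$-fixed vector $w$.
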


\begin{proof}
The only part that is not immediate is the $M$-invariance but this is guaranteed by \cite[Thm.~V.4.1(i)]{helgason:1984}. 
\end{proof}

\begin{remark}
    From (i) in Lemma \ref{onenewlemma}, we conclude that the functions $H^\nu_v$ in \eqref{eq:func_H} define sections of a line bundle
    $$L_\nu := (G\times_{A} \mathbb{C}\cdot v_\nu) \longrightarrow  G/A.$$
    Point (ii) in Lemma \ref{onenewlemma} then gives that $L_\nu$ is actually a pull-back of a line bundle 
    $$
    \tilde L_\nu:= (G\times_{MAN} \mathbb{C}\cdot v_\nu) \longrightarrow B, 
    $$
    and that the functions $H_v^\nu$ are actually pull-backs of sections of $\tilde L_\nu$ under the projection $G/A\to B=G/MAN$. 
\end{remark}

{}

\begin{lemma}\label{lemma_existsnu}
    For each non-zero $\tilde \xi\in \bar\nn$ and $\nu\in \hat\aa_+^*$ we have $\pi_\nu(\tilde\xi) v_\nu\neq 0.$ 
\end{lemma}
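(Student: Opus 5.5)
The plan is to reduce to a single restricted root space using the $\aa$-weight decomposition of $\mathcal H_\nu$, and then to apply a rank-one $\mathfrak{sl}_2$-type computation. By \cite[Thm.~V.4.1]{helgason:1984}, $v_\nu$ is a nonzero highest weight vector for the restricted root structure. This means $\pi_\nu(H)v_\nu=\nu(H)v_\nu$ for $H\in\aa$ and $\pi_\nu(\nn)v_\nu=0$ (we also use this for Lemma~\ref{onenewlemma}). Since $\nu\in\hat\aa_+^*\subset\aa_+^*$, we have $\langle\nu,\alpha\rangle>0$ for every $\alpha\in\Sigma^+$. This strict positivity is the only place regularity of $\nu$ is used, and it is the crux of the argument.

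\textbf{Step 1: reduction to one root space.} Write a nonzero $\tilde\xi\in\bar\nn$ as $\tilde\xi=\sum_{\alpha\in\Sigma^+}Y_{-\alpha}$ with $Y_{-\alpha}\in\gg_{-\alpha}$, and fix some $\alpha$ with $Y:=Y_{-\alpha}\neq0$. Since $[H,Y_{-\beta}]=-\beta(H)Y_{-\beta}$, the vector $\pi_\nu(Y_{-\beta})v_\nu$ lies in the $\aa$-weight space of weight $\nu-\beta$. These weight spaces are distinct for distinct $\beta$, so their sum is direct. Hence it suffices to show $\pi_\nu(Y)v_\nu\neq0$.

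\textbf{Step 2: pair with the opposite root vector.} Set $X:=\sigma Y\in\gg_\alpha\subset\nn$, where $\sigma$ is the Cartan involution. Then
\begin{equation*}
\pi_\nu(X)\pi_\nu(Y)v_\nu=\pi_\nu([X,Y])v_\nu+\pi_\nu(Y)\pi_\nu(X)v_\nu=\pi_\nu([X,Y])v_\nu .
\end{equation*}
We use the standard identity (e.g.\ Helgason, Ch.~III/VI) that for $Z\in\gg_\beta$ one has $[Z,\sigma Z]=B(Z,\sigma Z)H_\beta$, where $B$ is the Killing form, $H_\beta\in\aa$ is dual to $\beta$, and $B(Z,\sigma Z)=-B_\sigma(Z,Z)<0$ for $Z\neq0$. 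Applying it with $Z=Y$ and $\beta=-\alpha$ (so $H_{-\alpha}=-H_\alpha$) gives
\begin{equation*}
[X,Y]=[\sigma Y,Y]=-[Y,\sigma Y]=B_\sigma(Y,Y)\,H_{-\alpha}\cdot(-1)=B_\sigma(Y,Y)\,H_\alpha .
\end{equation*}
This is a positive multiple of $H_\alpha$. Therefore
\begin{equation*}
\pi_\nu(X)\pi_\nu(Y)v_\nu=B_\sigma(Y,Y)\,\langle\nu,\alpha\rangle\, v_\nu\neq 0,
\end{equation*}
because $B_\sigma(Y,Y)>0$, $\langle\nu,\alpha\rangle>0$, and $v_\nu\neq0$. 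Consequently $\pi_\nu(Y)v_\nu\neq0$, and by Step 1, $\pi_\nu(\tilde\xi)v_\nu\neq0$.

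\textbf{Main obstacle.} The only delicate point is bookkeeping of signs and normalizations in the identity $[Z,\sigma Z]=B(Z,\sigma Z)H_\beta$. Any sign convention works, however, as long as $[X,Y]$ turns out to be a nonzero multiple of $H_\alpha$. The conclusion then needs only $\langle\nu,\alpha\rangle\neq0$, which the regularity of $\nu$ guarantees.
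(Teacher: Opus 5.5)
Your proof is correct, but it takes a different route from the paper's.

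\textbf{The paper's route.} Since $\mm+\nn$ annihilates $v_\nu$ and $\aa$ preserves $\CC v_\nu$, the stabilizer of the line $\CC v_\nu$ in $\gg$ contains the minimal parabolic. By the structure theory in \cite[\S~VII.7]{knapp:1996} it is therefore a parabolic $\mm+\aa+\nn+\bar\uu$, with $\bar\uu$ a sum of negative root spaces. The paper then rules out $\bar\uu\neq 0$. A root vector $\tilde\zeta\in\bar\uu\cap\gg_{-\alpha}$ would annihilate $v_\nu$ by the weight argument. Then $[\tilde\zeta,\sigma\tilde\zeta]\in\RR^\times H_\alpha$ would also annihilate $v_\nu$, contradicting $\langle\nu,\alpha\rangle\neq 0$. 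So the stabilizer is exactly $\mm+\aa+\nn$, which meets $\bar\nn$ trivially.

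\textbf{Your route, compared.} You avoid the classification of parabolic subalgebras entirely. The weight decomposition lets you project a general $\tilde\xi$ onto one nonzero root-space component $Y$. Commuting with $\sigma Y$ then gives the explicit nonzero vector $\pi_\nu(\sigma Y)\pi_\nu(Y)v_\nu$. Both arguments rest on the same core: weights, the identity $[Z,\sigma Z]=B(Z,\sigma Z)H_\beta$, and regularity of $\nu$. Yours is more elementary and self-contained. The paper's yields the structural fact that the stabilizer of $\CC v_\nu$ is precisely the minimal parabolic. Your Step 1 also directly gives the slightly stronger fact $\pi_\nu(\tilde\xi)v_\nu\notin\CC v_\nu$, since the vector is nonzero and has components only in weights $\nu-\beta\neq\nu$. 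That stronger fact is what the proof of Lemma~\ref{polarizationoncoadjointorbit}(ii) actually uses.

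\textbf{A sign slip.} In your displayed chain, $-[Y,\sigma Y]=-B(Y,\sigma Y)H_{-\alpha}=B_\sigma(Y,Y)H_{-\alpha}=-B_\sigma(Y,Y)H_\alpha$. So $[X,Y]$ is a \emph{negative} multiple of $H_\alpha$, as you can check with $Y=F$, $X=\sigma F=-E$ in $\mathfrak{sl}_2(\RR)$. As you anticipated, this is harmless: the argument only needs $[X,Y]\in\RR^\times H_\alpha$.
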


\begin{proof}

Lemma~\ref{onenewlemma} implies that, for $\nu\in \hat\aa_+^*$, $\mm+\nn$ is in the annihilator of $v_\nu$ and $\aa$ is in the stabilizer of  $\CC v_\nu$.  Thus the stabilizer of $\CC v_\nu$ contains the minimal parabolic subalgebra $\mm+\aa+\nn$. Now \cite[\S~VII.7]{knapp:1996} yields that the stabilizer of $\CC v_\nu$ is a real parabolic subalgebra of $\gg$ of the form $\mm+\aa+\nn+\bar\uu$ with $\uu$ a subalgebra of $\nn$. Any element $\tilde\xi\in \bar\nn$ stabilizing $\CC v_\nu$ has to annihilate it because any $\gg_\alpha$ maps $v_\nu$ into the weight space of $\nu+\alpha$. Thus the annihilator of $v_\nu$ contains $\mm+\nn+\bar\uu$. If $\bar\uu\not=0$, then it contains a root vector $\tilde\zeta\in \gg_{-\alpha}$ with $\alpha\in \Sigma^+$.  By \cite[Prop.~6.52]{knapp:1996} we have 
\[cH_\alpha=[\tilde\zeta,\sigma\tilde\zeta]\in (\mm+\aa)\cap \ss=\aa\]
with $c<0$. But $\pi_\nu(\sigma\tilde\zeta)v_\nu=0$ as $\sigma\tilde\zeta\in \gg_\aa$, so we also have $\pi_\nu([\tilde\zeta,\sigma\tilde\zeta])v_\nu=0$. But then Lemma~\ref{onenewlemma}(i) says that $\langle \nu, [\tilde\zeta,\sigma\tilde\zeta]\rangle =0 $ which contradicts $\nu\in \aa_+^*$. Thus, we have shown that $\bar\uu=0$.

We now know that only elements of $\mm+\aa+\nn$ can stabilize $\CC v_\nu$. In particular, no element of $\bar\nn$ can stabilize $\CC v_\nu$, let alone annihilate it.

\end{proof}

\begin{lemma}\label{polarizationoncoadjointorbit} 
 Whenever well-defined, on open sets $U\subset G/MA$, consider the functions 
$${F}^\nu_{v,\Tilde{v}}: U \to \mathbb{C}$$ given by 
 \begin{equation}\label{eq:Pl-remaining}
        {F}^\nu_{v,\Tilde{v}}(g \cdot MA) := \frac{\langle v, \pi_\nu(g)v_\nu\rangle}{\langle \Tilde{v}, \pi_\nu(g)v_\nu\rangle}.
    \end{equation}

Then
\begin{itemize}
    \item[(i)]  ${F}^\nu_{v,\Tilde{v}}$ is constant along the distribution $\mathcal{D}\subset T(G/MA)$ of Lemma \ref{lemmadistribution}.
    \item[(ii)] For each non-zero tangent vector $u\in \overline{\mathcal{D}}_{g\cdot MA}$, there exists $\nu\in \hat\aa_+^*, v, \tilde v\in \mathcal{H}_\nu$, such that 
    $$
    dF^\nu_{v,\tilde v} (u) \neq 0.
    $$
    \item[(iii)] For $\lambda\in \aa_+^*$, the Hamiltonian vector fields of the set of functions 
    $(\alpha_\lambda^{-1})^* {F}^\nu_{v,\Tilde{v}}$ generate the polarization $\mathcal{P}_{\mathcal{O}_\lambda}$. 
\end{itemize}
\end{lemma}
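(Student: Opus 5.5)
The plan is to work on $G/MA$ throughout and transport everything to $\mathcal{O}_\lambda$ via $\alpha_\lambda$ only at the end. Lemma \ref{onenewlemma} does most of the work.

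\emph{Well-definedness.} By Lemma \ref{onenewlemma}, under $g\mapsto gma$ with $a=e^{H}$, $H\in\aa$, both numerator and denominator of \eqref{eq:Pl-remaining} are multiplied by the same factor $e^{\langle\nu,H\rangle}$. Hence ${F}^\nu_{v,\tilde v}$ is a well-defined smooth function on the open set where $\langle\tilde v,\pi_\nu(g)v_\nu\rangle\neq0$.

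\emph{Step (i).} $\mathcal{D}$ is $G$-invariant and $\mathcal D_{e\cdot MA}=\nn$, which is $\Ad_{MA}$-stable. So $\mathcal{D}_{g\cdot MA}$ is spanned by the velocities at $s=0$ of the curves $s\mapsto ge^{sY}\cdot MA$ with $Y\in\nn$. Along such a curve, Lemma \ref{onenewlemma}(ii) gives $\langle v,\pi_\nu(ge^{sY})v_\nu\rangle=\langle v,\pi_\nu(g)v_\nu\rangle$, and likewise for $\tilde v$. Thus ${F}^\nu_{v,\tilde v}$ is constant along these curves, which proves (i).

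\emph{Step (ii).} A nonzero $u\in\overline{\mathcal D}_{g\cdot MA}$ is the velocity of $s\mapsto ge^{sY}\cdot MA$ for some nonzero $Y\in\bar\nn$. Put $w:=\pi_\nu(Y)v_\nu$ and differentiate the quotient:
$$dF^\nu_{v,\tilde v}(u)=\frac{\langle v,\pi_\nu(g)w\rangle\langle\tilde v,\pi_\nu(g)v_\nu\rangle-\langle v,\pi_\nu(g)v_\nu\rangle\langle\tilde v,\pi_\nu(g)w\rangle}{\langle\tilde v,\pi_\nu(g)v_\nu\rangle^2}.$$
Take any $\nu\in\hat\aa_+^*$.
\begin{itemize}
\item By Lemma \ref{lemma_existsnu}, $w\neq0$.
\item Decompose $Y$ into root vectors in $\gg_{-\alpha}$, $\alpha\in\Sigma^+$. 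Then $w$ is a sum of $\aa$-weight vectors of weights $\nu-\alpha\neq\nu$, so $w$ and $v_\nu$ are linearly independent.
\item Hence $\pi_\nu(g)w$ and $\pi_\nu(g)v_\nu$ are linearly independent.
\end{itemize}
Now choose $\tilde v:=\pi_\nu(g)v_\nu$, so the denominator is nonzero near $g\cdot MA$. Choose $v$ orthogonal to $\pi_\nu(g)v_\nu$ but not to $\pi_\nu(g)w$. The second term in the numerator then vanishes and $dF^\nu_{v,\tilde v}(u)=\langle v,\pi_\nu(g)w\rangle/\langle\tilde v,\pi_\nu(g)v_\nu\rangle\neq0$. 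I expect this linear-independence step to be the only delicate point; the weight argument settles it.

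\emph{Step (iii).} By Lemma \ref{lemmadistribution}, $\mathcal P_{\mathcal O_\lambda}=(\alpha_\lambda)_*\mathcal D$, which is Lagrangian for the KKS form $\omega$.
\begin{itemize}
\item By (i), the pulled-back functions $(\alpha_\lambda^{-1})^*F^\nu_{v,\tilde v}$ (taking real and imaginary parts) have differentials in $\mathrm{ann}(\mathcal P_{\mathcal O_\lambda})$. So their Hamiltonian vector fields lie in $\mathcal P_{\mathcal O_\lambda}^{\omega}=\mathcal P_{\mathcal O_\lambda}$.
\item For the spanning statement: since $T(G/MA)=\mathcal D\oplus\overline{\mathcal D}$, $\mathrm{ann}(\mathcal D)$ restricts isomorphically onto $\overline{\mathcal D}^*$.
\item By (ii), the real and imaginary parts of the differentials $dF^\nu_{v,\tilde v}$ have no common kernel in $\overline{\mathcal D}$. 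Hence at each point they span $\overline{\mathcal D}^*\cong\mathrm{ann}(\mathcal D)$.
\item Applying $\omega^{-1}$, the Hamiltonian vector fields span $\omega^{-1}(\mathrm{ann}\,\mathcal P_{\mathcal O_\lambda})=\mathcal P_{\mathcal O_\lambda}$ pointwise, which gives (iii).
\end{itemize}
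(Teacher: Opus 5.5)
Your proof is correct and follows the paper's argument. Part (i) uses the $N$-invariance of $g\mapsto\langle v,\pi_\nu(g)v_\nu\rangle$, and part (ii) uses Lemma~\ref{lemma_existsnu} together with $\tilde v=\pi_\nu(g)v_\nu$: the paper takes $v=\pi_\nu(g)\pi_\nu(\tilde\xi)v_\nu$ and appeals to the equality case of Cauchy--Schwarz, and your orthogonal-projection choice of $v$ amounts to the same thing. Your spanning argument for (iii) fills in a step the paper only asserts: differentials with no common kernel on $\overline{\mathcal D}$ span $\mathrm{ann}(\mathcal D)$, and applying $\omega^{-1}$ with $\mathcal P_{\mathcal O_\lambda}$ Lagrangian gives the polarization.
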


\begin{proof}
Let $\tilde \xi\in \nn$ and consider the curve $\gamma(t)= ge^{t\tilde \xi }\cdot MA, t\in (-\epsilon, \epsilon), \epsilon>0$, so that $\dot\gamma(0) \in \mathcal{D}_{g\cdot MA}.$
To prove (i), note that, since $\pi_\nu (\tilde \xi) v_\nu=0$, a simple explicit calculation gives that
$$
\frac{d}{dt}_{\vert_{t=0}} {F}^\nu_{v,\Tilde{v}} (\gamma(t)) = 0.
$$

To prove (ii), let $u=\dot\gamma(0)$, where $\gamma(t) = ge^{t\tilde \xi}$ with $\tilde \xi\in \bar\nn$. We have
$$
 d {F}^\nu_{v,\Tilde{v}} (u) \, =  \, \frac{\langle v, \pi_\nu(g)\pi_\nu(\tilde \xi) v_\nu\rangle \langle \tilde v, \pi_\nu(g) v_\nu\rangle - \langle v, \pi_\nu(g)v_\nu\rangle \langle \tilde v, \pi_\nu(g)\pi_\nu(\tilde\xi)v_\nu\rangle}{\langle \tilde v, \pi_\nu(g) v_\nu\rangle^{2}}   \, .
$$
From Lemma \ref{lemma_existsnu}, for each such $\tilde \xi\in \bar\nn$, one can find $\nu\in \hat\aa_+^*$ such that $\pi_\nu(\tilde\xi) v_\nu\neq 0.$ 

Let then
$v= \pi_\nu(g) \pi_\nu(\tilde \xi) v_\nu$ and $\tilde v = \pi_\nu(g) v_\nu.$ Then,
$$
dF^\nu_{v,\tilde v} (u) = \vert\vert \tilde v\vert\vert^{-2} \left( \vert\vert v \vert\vert^2 \vert\vert \tilde v\vert\vert^2 - \vert \langle v, \tilde v\rangle\vert^2\right).
$$
The numerator vanishes iff $\pi_\nu (\tilde\xi) v_\nu$ is proportional to $v_\nu$ which will not happen if $\tilde \xi \in \bar\nn$. 

Given the description of $\mathcal{P}_{\mathcal{O}_\lambda}$ in Lemma \ref{lemmadistribution}, one immediately obtains that (i) and (ii) imply (iii). Note that the differential of 
$(\alpha_\lambda^{-1})^* {F}^\nu_{v,\Tilde{v}}$ vanishes along vectors lying in the distribution $\mathcal{P}_{\mathcal{O}_\lambda}$, so that the Hamiltonian vector field of $(\alpha_\lambda^{-1})^* {F}^\nu_{v,\Tilde{v}}$ is in $\mathcal{P}_{\mathcal{O}_\lambda}$. 
\end{proof}

Let $\mu^{\mathrm{Kir}}: (T^*X)_\mathrm{reg}\to \aa_+^*$ be the Kirwan moment map which is obtained by rotating the values of the moment map 
$\mu$ in (\ref{e-moment-symm-space}) to the positive Weyl chamber by acting with the Weyl group.  Let $$r:=\mathrm{rank}(G/K)=\mathrm{dim}\,\mu^\mathrm{Kir}(T^\ast X)_\mathrm{reg}=\dim A.$$

{}

\begin{proposition}\label{lemmpropertiesP_F}
The Fourier polarization $\mathcal{P}_\mathrm{F}$ on $(T^*X)_\mathrm{reg}$ is uniquely defined by the following two properties
\begin{enumerate}
    \item[(i)] The Hamiltonian vector fields of the components of $\mu^\mathrm{Kir}$ define an $r$-\hspace{0pt}dimensional subspace of the directions in 
    $\mathcal{P}_\mathrm{F}$.
       \item[(ii)] Consider 
    \[\tilde\mu: (T^*X)_\mathrm{reg}\to G/MA, \quad [g,\xi]\mapsto \alpha_{\mu^\mathrm{Kir}([g,\xi])}^{-1}\circ\mu([g,\xi]).\]
The pull-backs under $\tilde\mu$ of  functions factorizing through $G/MA\to G/MAN$ are $\mathcal{P}_\mathrm{F}$-\hspace{0pt}polarized. 
\end{enumerate}
\end{proposition}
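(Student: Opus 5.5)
We work throughout on the model $X\times\aa_+^*\times B$ via $\varphi$. In this model $\mathcal{P}_\mathrm{F}$ is the tangent distribution to the fibers of $\widehat\pi=(\lambda,b)$. The plan is to check that $\mathcal{P}_\mathrm{F}$ has properties (i) and (ii), and then to show that (i) and (ii) together single out an $n$-dimensional space of directions at each point, where $n=\dim X$.

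First we identify the maps. At $[g,\lambda]$ with $\lambda\in\aa_+^*$ we have $\mu([g,\lambda])=\mathrm{Ad}^*_g\lambda\in\mathcal{O}_\lambda$, hence $\mu^\mathrm{Kir}([g,\lambda])=\lambda$. Consequently $\tilde\mu([g,\lambda])=\alpha_\lambda^{-1}(\mathrm{Ad}_g^*\lambda)=g\cdot MA$. Composing with $G/MA\to G/MAN=B$ gives $g\cdot MAN=k_1(g)M$, which is exactly the $B$-component of $\varphi$. So $\mu^\mathrm{Kir}$ is the $\aa_+^*$-component and the composite in (ii) is the $B$-component, and $\widehat\pi=(\mu^\mathrm{Kir},\,p\circ\tilde\mu)$ with $p:G/MA\to B$ the projection.

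We now verify the two properties. For (ii), the pull-backs of functions on $B$ are functions of $\widehat\pi$. A Lagrangian foliation is generated by the Hamiltonian vector fields of functions constant on its leaves, so these functions are $\mathcal{P}_\mathrm{F}$-polarized by Proposition \ref{prop:Fourier-polarisation}. For (i), the components $\langle\mu^\mathrm{Kir},H_j\rangle$, with $H_1,\dots,H_r$ a basis of $\aa$, are also functions of $\widehat\pi$, so their Hamiltonian vector fields lie in $\mathcal{P}_\mathrm{F}$. These fields are linearly independent, since $d\mu^\mathrm{Kir}$ has rank $r$ (it is a coordinate projection in the model) and the symplectic form is nondegenerate. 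They therefore span an $r$-dimensional subspace.

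For uniqueness, suppose $\mathcal{P}$ is a Lagrangian distribution (polarization) satisfying (i) and (ii). By (ii), every function of the form $f\circ p\circ\tilde\mu$ with $f\in C^\infty(B)$ is $\mathcal{P}$-polarized. Since $\mathcal{P}$ is Lagrangian, this means $\mathcal{P}\subset\ker d(p\circ\tilde\mu)$. By (i), $\mathcal{P}$ contains the Hamiltonian vector fields $X_{\mu^\mathrm{Kir}_j}$. Being Lagrangian, $\mathcal{P}$ is its own $\omega$-orthogonal, so it is contained in $\{X_{\mu^\mathrm{Kir}_j}\}^{\omega}=\ker d\mu^\mathrm{Kir}$. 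Hence
\[
\mathcal{P}\subset\ker d\mu^\mathrm{Kir}\cap\ker d(p\circ\tilde\mu)=\ker d\widehat\pi=\mathcal{P}_\mathrm{F}.
\]
Both $\mathcal{P}$ and $\mathcal{P}_\mathrm{F}$ are Lagrangian, hence of dimension $n$. The submersion $\widehat\pi$ has fibers of dimension $\dim X=n$, since $\dim\aa+\dim B=\dim(G/M)-\dim X=n$. Equality of dimensions then forces $\mathcal{P}=\mathcal{P}_\mathrm{F}$.

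The main obstacle is interpretive rather than technical. One must read (i) as "$\mathcal{P}$ contains these Hamiltonian fields" and (ii) as "these functions are $\mathcal{P}$-polarized", and then use the Lagrangian property to turn both into inclusions of $\mathcal{P}$ in kernels. The other point requiring care is the check that $\widehat\pi$ is a submersion, so that the dimension count closes. This follows because $\varphi$ is a diffeomorphism, so $\widehat\pi$ is a coordinate projection.
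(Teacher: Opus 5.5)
Your verification that $\mathcal{P}_\mathrm{F}$ has properties (i) and (ii) is the same as the paper's. Both arguments identify $\mu^\mathrm{Kir}$ with the $\aa_+^*$-coordinate of $\varphi$. Both also observe that $\widehat\pi$ factors through $\mu$ followed by $\mathcal{O}_\lambda\cong G/MA\to B$, that is, $\widehat\pi=(\mu^\mathrm{Kir},p\circ\tilde\mu)$. You additionally check that the $X_{\mu^\mathrm{Kir}_j}$ are linearly independent, which the paper leaves tacit.

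Where you go beyond the paper is uniqueness. The paper only checks that $\mathcal{P}_\mathrm{F}$ satisfies (i) and (ii) and never argues that these properties determine it. Your argument uses $\mathcal{P}=\mathcal{P}^{\omega}$ to turn (i) into $\mathcal{P}\subset\ker d\mu^\mathrm{Kir}$ and (ii) into $\mathcal{P}\subset\ker d(p\circ\tilde\mu)$, then compares dimensions. This correctly supplies the half of the statement that the paper's proof leaves implicit.

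One slip to fix: $\dim(G/M)-\dim X=\dim B$, not $\dim\aa+\dim B$. The correct count is $\dim\aa+\dim B=\dim(T^*X)_\mathrm{reg}-\dim X=2n-n=n$. That sentence is redundant anyway, since $\mathcal{P}_\mathrm{F}$ is Lagrangian by Proposition~\ref{prop:Fourier-polarisation}. Equivalently, the fibers of the coordinate projection $X\times\aa_+^*\times B\to\aa_+^*\times B$ are copies of $X$.
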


\begin{proof}
   \begin{enumerate}
       \item[(i)] Recall that, under the identification $(T^*X)_\mathrm{reg} \cong X\times  \aa_+^*\times B$ given by the map $\varphi$ in (\ref{thebigdiffeo}), the Fourier polarization $\mathcal{P}_\mathrm{F}$ is defined by the projection $X\times \aa_+^*\times B \to  \aa_+^*\times B$. The coordinates of $\mu^{\mathrm{Kir}}$ are then given directly by the component along $\aa_+^*$
    that is, under the above identification $\mu^\mathrm{Kir} (x,\lambda,b)= \lambda.$ This is clearly constant along the fibers of the projection defining $\mathcal{P}_\mathrm{F},$ which proves (i).

    \item[(ii)]  
    Note that the projection defining $\mathcal{P}_\mathrm{F}$, $(T^*X)_\mathrm{reg}\to \aa_+^*\times B$,  factorizes through
   $\mu:(T^*X)_\mathrm{reg}  \to (\gg^*_h)_\mathrm{reg} \cong \aa_+^*\times G/MA$, with the projection onto $B$ being the projection $\mathcal{O}_\lambda \cong G/MA \to B$ which defines $\mathcal{P}_{\mathcal{O}_\lambda},$ as follows from Lemmas \ref{lemmadistribution}, \ref{onenewlemma} and \ref{polarizationoncoadjointorbit}. 
    As $\tilde \mu$ is the composition of $\mu$ with the projection onto the second factor if we use the identification $(\gg^*_h)_\mathrm{reg} \cong \aa_+^*\times G/MA$ in the description of 
\[\mu: (T^*X)_\mathrm{reg}\to (\gg^*_h)_\mathrm{reg},\] then (ii) follows.
   \end{enumerate}  
\end{proof}

We are now ready to prove our main result \eqref{eq:P-Pl} of this section.

\begin{theorem}\label{thmpolarization}
    The limit polarization on $(T^\ast X)_\mathrm{reg}$ in \eqref{eq:P-Pl} exists and it coincides with the Fourier polarization,
    $$
\lim_{t \to \infty}    \mathcal{P}_t = \lim_{t \to \infty}  \,  (\varphi_{-t}^{X_h})_\ast\mathcal{P}_0 = 
  \mathcal{P}_\mathrm{F}.  $$

           \end{theorem}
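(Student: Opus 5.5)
By Proposition \ref{lemmpropertiesP_F}, $\mathcal{P}_\mathrm{F}$ is characterized by two properties: (i) it contains the Hamiltonian vector fields of the components of $\mu^\mathrm{Kir}$, and (ii) it annihilates the differentials of $\tilde\mu^*$ of functions pulled back from $B=G/MAN$. The plan is to produce, at each point of $(T^*X)_\mathrm{reg}$, enough $\mathcal{P}_t$-polarized functions. After rescaling by $t$-dependent factors and possibly adding $t$-independent constants, their differentials should converge as $t\to\infty$ to differentials of functions that are $\mathcal{P}_\mathrm{F}$-polarized. Since the space of $n$-dimensional subspaces is compact and Lagrangian subspaces form a closed subset, it then suffices to show two things at each point $p=[g,\lambda_H]$. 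First, the limit of the annihilators $\mathcal{P}_t^\perp=\{dF_t\}$ exists and equals the span of $d\mu^\mathrm{Kir}_j$ and $d(\tilde\mu^*\phi)$, with $\phi$ ranging over functions on $B$. Second, this span is $n$-dimensional, where $n=\dim X$. Indeed, $\mathcal{P}_\mathrm{F}$ is Lagrangian, so $\mathcal{P}_\mathrm{F}=\mathcal{P}_\mathrm{F}^{\omega}$, and its annihilator is exactly the set of differentials of $\mathcal{P}_\mathrm{F}$-polarized functions.

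The test functions come from Proposition \ref{timeevolution}. For $f\in C^\infty(X)$ the function $F_t([g,\lambda_H])=f(ge^{tH}\cdot K)$ is $\mathcal{P}_t$-polarized. I would choose $f$ from matrix coefficients of the spherical representations $\pi_\nu$, $\nu\in\hat\aa_+^*$. Let $v_\nu^K$ be the $K$-fixed vector of $\mathcal{H}_\nu$ and set $f(g\cdot K)=\langle v,\pi_\nu(g)v_\nu^K\rangle$. Expanding $v_\nu^K=\sum_\mu c_\mu v_\mu$ in weight vectors, the highest weight has $c_\nu\neq 0$ by \cite[Thm.~V.4.1]{helgason:1984}, which gives
\[
F_t([g,\lambda_H])=\sum_{\mu}c_\mu e^{t\langle\mu,H\rangle}\langle v,\pi_\nu(g)v_\mu\rangle .
\]
Because $H\in\aa_+$ is regular, the highest weight term dominates. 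Hence $e^{-t\langle\nu,H\rangle}F_t$ converges, together with its differentials, to $c_\nu H^\nu_v(g)$. The differentials converge because the correction terms are $O(te^{-t\delta})$ with $\delta>0$, and this is uniform in $H$ locally on $\aa_+$.

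This limit is not yet polarized; only the ratios are. Take the quotient
\[
F_t/\tilde F_t\;\longrightarrow\; F^\nu_{v,\tilde v}(g\cdot MA),
\]
which is a quotient of two $\mathcal{P}_t$-polarized functions and hence $\mathcal{P}_t$-polarized. Its limit is $\tilde\mu^*$ of the function in Lemma \ref{polarizationoncoadjointorbit}, and by part (i) of that lemma it factors through $B$. Part (ii) of Lemma \ref{polarizationoncoadjointorbit} supplies enough such functions to obtain $\dim B=n-r$ independent limiting differentials. In the same way, $t^{-1}\log|F_t|\to\langle\nu,H\rangle$, with differential converging to $d\langle\nu,\mu^\mathrm{Kir}\rangle$. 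Letting $\nu$ range over $\hat\aa_+^*$, which spans $\aa^*$, gives $r$ more differentials, namely the components of $d\mu^\mathrm{Kir}$. These are independent of the previous ones since $\mu^\mathrm{Kir}$ and $\tilde\mu$ together give coordinates transversal to the fibres of $\widehat\pi$. So we obtain $n$ covectors, each a limit of elements of $\mathcal{P}_t^\perp$ and spanning an $n$-dimensional space. Consequently the limit of $\mathcal{P}_t^\perp$ exists and equals $\mathcal{P}_\mathrm{F}^\perp$, and therefore $\mathcal{P}_t\to\mathcal{P}_\mathrm{F}$.

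The main obstacle is the convergence of differentials rather than of values. The function $F_t$ depends on $H$ through $e^{tH}$, so differentiating in the $\aa^*_+$ directions produces factors of $t$. One must check that after the normalizations above (ratios and $t^{-1}\log$) all such terms either cancel or are $o(1)$. The subleading weights in the ratio give terms like $t(\mu-\nu)(\cdot)e^{t\langle\mu-\nu,H\rangle}$, and these decay because $\langle\mu-\nu,H\rangle<0$ for regular $H$. A second point to verify is that the denominators $\langle\tilde v,\pi_\nu(g)v_\nu\rangle$ do not vanish locally, which can always be arranged near a given point by choosing $\tilde v$.
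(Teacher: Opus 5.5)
Your proposal is correct and follows the same route as the paper. Both evolve spherical matrix coefficients $\langle v,\pi_\nu(g)v_\nu^K\rangle$ under the flow and use highest-weight dominance. The ratios then converge to $F^\nu_{v,\tilde v}\circ\tilde\mu$, the functions $t^{-1}\log$ converge to $\langle\nu,\mu^{\mathrm{Kir}}\rangle$, and Proposition \ref{lemmpropertiesP_F} gives containment in $\mathcal{P}_\mathrm{F}$. Beyond the paper, you make explicit the convergence of differentials, the nonvanishing of $\langle v_\nu,v_\nu^K\rangle$, and the dimension count showing that the limiting covectors span the full $n$-dimensional annihilator.
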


\begin{proof}For $\nu\in \hat\aa_+^*$, a  spherical vector $v_\nu^K\in\mathcal{H}_\nu$ and $v\in \mathcal{H}_\nu$, consider the linear functional $v^*$ on $\mathcal{H}_\nu$ defined by $v^*(w)=\langle w,v\rangle_{\mathcal{H}_\nu}$ and the rank one operator 
\[v_\nu^K \otimes v^\ast:\mathcal{H}_\nu\to\mathcal{H}_\nu,\quad w\mapsto \langle w,v\rangle v_\nu^K.\]
Then the function $f_{\nu,v}: (T^*X)_\mathrm{reg}\to \CC$ defined by
\begin{equation}
    f_{\nu,v}([g,\xi]) = \mathrm{tr}(\pi_\nu(g) (v_\nu^K \otimes v^\ast)),\quad g\in G, \xi\in \aa_+^*
\end{equation}
is vertically polarized, so its Hamiltonian vector field lies in $\mathcal{P}_0$.

{}

Let $\left\{w_j\right\}_{j=1, \dots \dim \mathcal{H}_\nu},$ be a basis of weight vectors for $\mathcal{H}_\nu$, where $w_j$ corresponds to the weight 
$\nu_j = \nu - \beta_j$ where $\beta_j$ is a linear combination of positive roots with positive coefficients. Let us fix $w_1=v_\nu$ so that $\beta_1=0.$
From Proposition \ref{timeevolution}, the time $t$ evolution  of $f_{\nu,v}$ under the geodesic flow is
\begin{equation*}
\begin{array}{ll}
  f^t_{\nu, v}([g,\xi])   & :=  \mathrm{tr}(\pi_\nu(g)\pi_\nu(\mathrm{e}^{t\Tilde{\xi}})\, v_\nu^K \otimes v^\ast) \medskip\\
     & = \mathrm{tr}\left(\pi_\nu(g)\pi_\nu(\mathrm{e}^{t\Tilde{\xi}})\, \sum_{j=1}^{\dim \mathcal{H}_\nu} w_j\otimes w_j^* (v_\nu^K) \otimes v^\ast\right) \medskip\\
     & = \sum_{j=1}^{\dim \mathcal{H}_\nu} \mathrm{e}^{t(\nu-\beta_j)(\tilde \xi)}\mathrm{tr}(\pi_\nu(g) w_j\otimes w_j^* ( v_\nu^K) \otimes v^\ast) \medskip\\
     & = \mathrm{e}^{t\nu(\tilde \xi)}(\mathrm{tr}(\pi_\nu(g)\circ(v_\nu \otimes v_\nu^\ast)\circ (v_\nu^K \otimes v^\ast)))+ \, {o(e^{t\nu(\tilde \xi)})}, 
\end{array}
\end{equation*}
where for the last equality we have used that $\tilde\xi\in \aa_+$ and $\nu\in \aa_+^*$ so that $\nu(\tilde\xi)=\langle\nu,\tilde\xi\rangle> \langle\nu-\beta_j,\tilde\xi\rangle >0$. Thus we have
\begin{equation}\label{eq:lot}
    f^t_{\nu, v}([g,\xi]) =   \mathrm{e}^{t\nu(\tilde\xi)}\langle v_\nu, v_\nu^K \rangle \langle v, \pi_\nu (g)v_\nu \rangle + o(e^{t\nu(\tilde \xi)}) \, .
\end{equation}
and find  
\begin{equation*}
\hat F^\nu_{v,\Tilde{v}}([g, \xi]):=
\lim_{t\rightarrow\infty} \frac{f^t_{\nu,v}([g,\xi])}{f^t_{\nu,\Tilde{v}}([g, \xi])} = \frac{\langle v, \pi_\nu(g) v_\nu\rangle}{\langle \Tilde{v}, \pi_\nu(g) v_\nu\rangle}\overset{\eqref{eq:Pl-remaining}}{=} {F}^\nu_{v,\Tilde{v}}(g \cdot MA).   
\end{equation*}
Also, away from zeros of $f_{\nu,v}^t$, for any choice of branch for $\log$, we have
\begin{eqnarray*}
   \lim_{t \to \infty} \frac{1}{t} \,  \log f_{\nu,v}^t ([g,\xi]) &=&  
   \nu (\tilde \xi) +  \lim_{t \to \infty} \, \frac 1t \, \log\left(\langle v_\nu, v_\nu^K \rangle \langle v, \pi_\nu (g)v_\nu \rangle + o(1) \right)
   \\  
  &=& \nu (\tilde \xi).
\end{eqnarray*}
These limits imply the existence of the  limit polarization,
$\lim_{t\rightarrow\infty}\mathcal{P}_t$ in \eqref{eq:P-Pl} and show that it is generated by the Hamiltonian vector fields of the components of the Kirwan moment map
\begin{equation*}
    \mu^{\mathrm{Kir}}([g,\xi]) = \xi, \,\, \xi\in \aa_+^*, 
\end{equation*} together with the Hamiltonian vector fields of
$\hat F^\nu_{v,\Tilde{v}}$.

In view of \eqref{alphalambda}  we have
$$
\hat F^\nu_{v,\Tilde{v}}([g,\xi]) 
=F^\nu_{v,\Tilde{v}} \circ (\alpha_{\xi})^{-1} \circ \mu([g,\xi]) = {F}^\nu_{v,\Tilde{v}}\circ \tilde \mu([g,\xi]).
$$
Moreover, by Lemma~\ref{onenewlemma}(ii) the function ${F}^\nu_{v,\Tilde{v}}:G/MA\to \CC$ factors through $G/MA\to G/MAN$. Thus, Proposition \ref{lemmpropertiesP_F}(ii) implies that 
$\hat F^\nu_{v,\Tilde{v}}$ is $\mathcal{P}_\mathrm{F}$-\hspace{0pt}polarized. This shows that the limit polarization is contained in and hence equal to $\mathcal{P}_\mathrm{F}$. 
    \end{proof}

\begin{remark}
There exists a Weyl group action on the space of real $G$-invariant polarizations on $\mathcal{O}_\lambda$. 

To define a polarization which is ``rotated" relative to $\mathcal{P}_{\mathcal{O}_\lambda}$ by the Weyl group element $w\in W=N_K(A)/Z_K(A)$, one uses in Lemma \ref{polarizationoncoadjointorbit} the Hamiltonian vector fields of the functions 
\begin{eqnarray*}
    F^{w\nu}_{v,\Tilde{v}}([g,\xi])=( F^{\nu}_{v,\Tilde{v}})^w([g,\xi]) & := F^\nu_{v,\Tilde{v}}([g,w\xi]) =  F^\nu_{v,\Tilde{v}}([gg_w,\xi]) \phantom{aaaa} \\
    & = \dfrac{\langle v, \pi_\nu(g)\pi_\nu(g_w)v_\nu\rangle}{\langle \Tilde{v}, \pi_\nu(g)\pi_\nu(g_w)v_\nu\rangle},
\end{eqnarray*}
where $g_w$ is a representative in $N_K(A)\subseteq G$ of $w$.

Note that the elements $g,\xi,v_\nu$ depend on the choice of positive Weyl chamber. 
\end{remark}

\subsection{Evolution of the canonical form along the geodesic flow}\label{sec:Evolution}

In this section, we will describe how the canonical form for the vertical polarization $\mathcal{P}_0$ evolves along the geodesic flow.
This will be important later for the consideration of the crucial contribution of the half-form correction in the geometric quantization description of the Fourier-Helgason transform.

Recall that a canonical form associated to a polarization $\mathcal{P}$ on a symplectic manifold $(M^{2n},\omega)$ is 
a $n$-form which is annihilated by $\mathcal{P}.$ 
A canonical form
for the polarizations $\mathcal{P}_t, t\geq 0$, described in Proposition  \ref{timeevolution},
is given by
\begin{equation}\label{detat}d\eta_t := \left(\varphi_t^{X_h}\right)^* \circ  \pi^* dx,\end{equation}
where $dx$ is the $G$-invariant $\dim(X)$-form on $X$ associated with the measure $dx$ introduced in Subsection~\ref{Notation}. Note that the canonical form for the vertical polarization 
$\mathcal{P}_0$ is $d\eta_0 = \pi^* dx.$

The main result in this section is Proposition \ref{PP1}, below, which describes the asymptotic behavior of $d\eta_t$ as $t\to\+\infty$. We will consider some preparatory material before stating and proving it.

Recall the Lie algebra notations introduced in Subsection~\ref{Notation}.
We will denote generators for $\nn$ given by root vectors $\left\{E_\alpha\right\}_{\alpha\in \Sigma^+},$ where the positive restricted roots will always be counted with multiplicities in what follows. We consider the root vectors
$$
E_{-\alpha} := \sigma (E_\alpha), \quad \alpha\in \Sigma^+ ,
$$
where $\sigma$ is the Cartan involution, as the generators of 
$\bar\nn $. We have
$$
\kk = \mm \oplus \bigoplus_{\alpha\in \Sigma^+} \kk_\alpha
\quad\text{and}\quad
\ss = \aa \oplus \bigoplus_{\alpha\in \Sigma^+} \ss_\alpha,
$$
where $\kk_\alpha := \kk \cap (\gg_\alpha + \gg_{-\alpha})$ and $\ss_\alpha := \ss \cap (\gg_\alpha + \gg_{-\alpha})$.
We will consider the generators 
$$
E^K_\alpha := E_\alpha + E_{-\alpha}
\quad\text{and}\quad 
E^{\ss}_\alpha := E_\alpha - E_{-\alpha}, \quad \alpha \in \Sigma^+,
$$
of $\kk_\alpha$ and $\ss_\alpha$ respectively. For $\mm$ we take generators  $\left\{M_i\right\}_{i=1, \dots, m}$, where $m:=\dim \mm.$ Let us recall a basic result.

Let $\Theta$ be the (left-invariant) $\gg$-valued  Maurer-Cartan form on $G$, so that 
$$
\Theta_g (g\cdot Y) = Y,\,\, Y\in T_eG
$$
where we will denote by $g\cdot Y$ the push-forward of $Y\in T_eG$ by the left-translation $L_g:G\to G, \, g\in G$.

\begin{lemma}\label{newlemman1}
 Under right translation by $g\in G$, $R_g:G\to G$, $R_g u = ug, u\in G,$ we have
$$
R_g^* \Theta = \mathrm{Ad}_{g^{-1}} \Theta.
$$
\end{lemma}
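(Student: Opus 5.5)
The plan is to verify the identity pointwise: evaluate both sides on an arbitrary tangent vector at an arbitrary point $u\in G$, using only the defining property of $\Theta$. Every tangent vector at $u$ can be written uniquely as $u\cdot Y$ for some $Y\in T_eG=\gg$, since left translation is a diffeomorphism. So it suffices to show that
$$
(R_g^*\Theta)_u(u\cdot Y)=\mathrm{Ad}_{g^{-1}}\bigl(\Theta_u(u\cdot Y)\bigr)=\mathrm{Ad}_{g^{-1}}Y
$$
for all $u\in G$ and $Y\in\gg$.

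By definition of pullback, the left-hand side equals $\Theta_{ug}\bigl(d_uR_g(u\cdot Y)\bigr)$. I would compute $d_uR_g(u\cdot Y)$ through the curve $s\mapsto u e^{sY}$, whose derivative at $s=0$ is $u\cdot Y$. Its image under $R_g$ is
$$
s\mapsto u e^{sY} g = ug\, (g^{-1}e^{sY}g) = ug\, e^{s\,\mathrm{Ad}_{g^{-1}}Y}.
$$
Differentiating at $s=0$ gives $d_uR_g(u\cdot Y)=(ug)\cdot \mathrm{Ad}_{g^{-1}}Y$, which is the left translate to $ug$ of $\mathrm{Ad}_{g^{-1}}Y$.

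Applying $\Theta_{ug}$ and using $\Theta_{ug}((ug)\cdot Z)=Z$ then yields $\mathrm{Ad}_{g^{-1}}Y$, which is the right-hand side. Since $u$ and $Y$ are arbitrary, this gives $R_g^*\Theta=\mathrm{Ad}_{g^{-1}}\Theta$ as $\gg$-valued $1$-forms.

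There is no real obstacle here. The only point needing care is the convention: the paper defines $\mathrm{Ad}_h$ as the differential of conjugation $c_h(x)=hxh^{-1}$, so that $g^{-1}e^{sY}g=e^{s\mathrm{Ad}_{g^{-1}}Y}$, which produces $\mathrm{Ad}_{g^{-1}}$ rather than $\mathrm{Ad}_g$.
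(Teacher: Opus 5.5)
Your proposal is correct and follows the same argument as the paper: evaluate on $u\cdot Y$, use $u\cdot Y\cdot g=ug\cdot \mathrm{Ad}_{g^{-1}}Y$, and apply $\Theta_{ug}$. You also justify that middle identity via the curve $s\mapsto ue^{sY}g=ug\,e^{s\,\mathrm{Ad}_{g^{-1}}Y}$, a step the paper states without comment.
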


\begin{proof}
    The proof is standard, but let us include it for completeness. Let $Y\in T_eG$. Then
    $$
    (R_g^*\Theta)_{u} (u\cdot Y) = \Theta_{ug} (u \cdot Y \cdot g) = \Theta_{ug} (ug\cdot \mathrm{Ad}_{g^{-1}}Y) = \mathrm{Ad}_{g^{-1}} Y,
    $$
    where we denote by $Y\cdot g$ the push-forward of $Y\in T_eG$ by  $R_g$.
\end{proof}

In order to prove Proposition \ref{PP1} it will be useful to recall the diffeomorphism 
$$
(T^*X)_\mathrm{reg} \cong G/M \times \aa_+^*,
$$
which assigns to the point $[g,\xi]\in (T^*X)_\mathrm{reg}, g\in G, \xi\in \aa_+^*$, the pair $(g\cdot M, \xi)\in G/M\times \aa_+^*$, c.f. \eqref{thebigdiffeo} and the diffeomorphism $G/M\cong G/K\times K/M$. Lift the geodesic flow given in \eqref{hamflow} to $G\times \aa_+^*$ through the natural projection $G\times \aa_+^* \to G/M \times \aa_+^*$. This means, for $t\in \RR$, we let $\Phi_t:G\times \aa_+^* \to G\times \aa_+^*$ be the map
    $$
    \Phi_t (g,\xi) := (g e^{t\tilde \xi}, \xi),
    $$
    where, as before, $\tilde \xi\in \aa_+$ is the image of $\xi\in \aa_+^*$ under the identification $\gg^*\cong \gg.$ We will denote also by $\Theta$ the pull-back of $\Theta$ from $G$ by the natural projection $$p_G:G\times \aa_+^* \to G.$$

We also consider a Killing orthonormal basis  $\left\{A_j\right\}_{j=1, \dots, r}$ for $\aa$ 
and let  $\left\{\hat \eta^A_j\right\}_{j=1, \dots r}$ denote the translation invariant $1$-forms on $\aa^*$ such that $\hat\eta_j^A(\xi) = \tilde \xi_j$ where $\tilde \xi =: \sum_{j=1}^r \tilde \xi_j A_j$. 
If we equip $\aa^*$ with the dual metric of the Killing form and write $\left\{\rho_j\right\}_{j=1, \dots, r}$ for the (orthonormal) dual basis of $\left\{A_j\right\}_{j=1, \dots, r}$, then $\hat\eta_j^A\in {\aa^*}^*=\aa$ form the dual basis of $\left\{\rho_j\right\}_{j=1, \dots, r}$, whence
\begin{equation}\label{lettherebedlambda}
d\mathrm{vol}_{\aa^*} = \bigwedge_{j=1}^r \hat \eta_j^A,
\end{equation}
is the volume form on the Riemannian manifold $\aa^*$ (see Appendix~\ref{appvolumeforms} for a more detailed discussion of volume forms).

According to the normalization of measures in Chapter II, Paragraph 3, Section 1 of \cite{helgason:2008}, we have
$$
d\lambda = (2\pi)^{-\frac{r}{2}} d\mathrm{vol}_{\aa^*},
$$
as mentioned at the end of Section \ref{Notation}. 
We will also use the same notation $\hat\eta^A_j$ for the pull-back of this one-form to $G\times \aa_+^*$ by 
the natural projection $p_{\aa_+^*}: G\times \aa_+^* \to \aa_+^*.$
    
\begin{lemma} \label{lemma245}
We have
\begin{equation}\label{pull-backedtheta}
(\Phi_t^* \Theta)_{(g,\xi)} = \mathrm{Ad}_{e^{-t\tilde \xi}} \Theta + t\sum_{j=1}^r \hat \eta^A_j A_j.
\end{equation}
\end{lemma}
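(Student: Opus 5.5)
The plan is to compute the pull-back of $\Theta$ under $\Phi_t$ on a tangent vector at $(g,\xi)$ by splitting the tangent space of $G\times\aa_+^*$ into the $G$-direction and the $\aa_+^*$-direction, and to use linearity. The map $\Phi_t$ is the composition of $(g,\xi)\mapsto (g, e^{t\tilde\xi})$ with group multiplication $m:G\times G\to G$. For the Maurer--Cartan form, multiplication satisfies the standard product rule
\[
(m^*\Theta)_{(u,v)} = \mathrm{Ad}_{v^{-1}}\,(p_1^*\Theta) + p_2^*\Theta ,
\]
where $p_1,p_2$ are the two projections. This is the content of Lemma~\ref{newlemman1} for the first factor, together with left invariance of $\Theta$ for the second. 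I would apply this identity with $u=g$ and $v=e^{t\tilde\xi}$.

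First I treat the $G$-direction. Take a tangent vector $(g\cdot Y,0)$ with $Y\in\gg$. Then $d\Phi_t(g\cdot Y,0)=g\cdot Y\cdot e^{t\tilde\xi}$, the right translate of $g\cdot Y$ by $e^{t\tilde\xi}$. By Lemma~\ref{newlemman1},
\[
\Theta\bigl(g\cdot Y\cdot e^{t\tilde\xi}\bigr)=\mathrm{Ad}_{e^{-t\tilde\xi}}Y .
\]
This is exactly the value of $\mathrm{Ad}_{e^{-t\tilde\xi}}\Theta$ on $(g\cdot Y,0)$. The second term $t\sum_j \hat\eta^A_j A_j$ vanishes on this vector, because the forms $\hat\eta^A_j$ are pulled back from $\aa_+^*$.

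Next I treat the $\aa^*$-direction. Take a vector $(0,\zeta)$ with $\zeta\in\aa^*$, and the curve $s\mapsto (g,\xi+s\zeta)$. Its image is $g\,e^{t(\tilde\xi+s\tilde\zeta)}$. Since $\aa$ is abelian, this equals $g\,e^{t\tilde\xi}e^{ts\tilde\zeta}$. It is therefore the left translate by $ge^{t\tilde\xi}$ of the one-parameter curve $e^{ts\tilde\zeta}$, so $\Theta$ evaluates on its velocity to $t\tilde\zeta$. On the other side, $\Theta$ pulled back from $G$ vanishes on $(0,\zeta)$. Writing $\tilde\zeta=\sum_j\tilde\zeta_jA_j$, we have $\hat\eta^A_j(\zeta)=\tilde\zeta_j$, so the right-hand side of \eqref{pull-backedtheta} gives $t\sum_j\tilde\zeta_jA_j=t\tilde\zeta$, as required.

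Since both sides are linear in the tangent vector, the two cases together prove \eqref{pull-backedtheta}. The only delicate point is the $\aa^*$-direction. There one must use the commutativity of $\aa$ so that $\frac{d}{ds}e^{t(\tilde\xi+s\tilde\zeta)}$ has no $\mathrm{Ad}$-correction. One must also check that the identification $\zeta\mapsto\tilde\zeta$ is compatible with the definition of $\hat\eta^A_j$ through the Killing-orthonormal basis $\{A_j\}$; this is immediate from the definitions.
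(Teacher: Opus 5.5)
Your proposal is correct and is essentially the paper's argument. The paper differentiates the single curve $s\mapsto (ge^{sY},\xi+s\xi')$ to obtain $(D\Phi_t)_{(g,\xi)}(g\cdot Y,\xi')=\bigl(ge^{t\tilde\xi}\cdot(\mathrm{Ad}_{e^{-t\tilde\xi}}Y+t\tilde\xi'),\xi'\bigr)$ and reads off $\Phi_t^*\Theta$; this is your two cases combined by linearity, with the same use of commutativity of $\aa$ to write $e^{t(\tilde\xi+s\tilde\xi')}=e^{t\tilde\xi}e^{ts\tilde\xi'}$.
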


{}

\begin{proof}
Let $(Y,\xi')$ be a tangent vector to $G\times \aa_+^*$ at the point $(e,\xi).$ 
    We have
\begin{align*}
    (D\Phi_t)_{(g,\xi)} (g\cdot Y,\xi') 
    &= \frac{d}{ds}_{\vert_{s=0}} \left(\Phi_t(g e^{sY}, \xi+ s\xi')\right) 
     = 
     \frac{d}{ds}_{\vert_{s=0}} (g e^{sY}e^{t(\tilde\xi + s\tilde\xi')}, \xi + s\xi')\\ 
     &= (ge^{t\tilde\xi}\cdot (\mathrm{Ad}_{e^{-t\tilde\xi}}Y+ t\tilde \xi'), \xi').
\end{align*}
    Then 
    $$
    (\Phi_t^* \Theta)_{(g,\xi)} (g\cdot Y,\xi')=  \mathrm{Ad}_{e^{-t\tilde \xi}} Y + t\tilde \xi',  
    $$
    from which the result follows immediately.
\end{proof}

{}

\begin{remark}[Expansions of the Maurer-Cartan form]\label{rem:Maurer-Cartan-form} We view the Maurer-Cartan form $\Theta_g$ at $g\in G$ as an element of \[\hom(T_gG,T_eG)=\hom(dL_g(e)(\gg),\gg)=(dL_g(e)(\gg))^*\otimes \gg=(dL_g(e)^*)^{-1}(\gg^*)\otimes\gg\]
with the dual map $dL_g(e)^*:T_g^*G\to \gg^*$ of the derivative $dL_g(e):\gg=T_eG\to T_gG$ of the left translation $L_g:G\to G$ by $g$ at $e$. Note that $G$ acts on $T^*G$ by 
\[\forall g,h\in G, \nu\in T_h^G: \quad g\cdot \nu= (dL_g(e)^*)^{-1}\nu\in \hom (T^*_{gh}G,T^*_hG).
\] 
This means the left $G$-invariant $\gg$-valued form $g\mapsto \Theta_g$ can be viewed as an element $\Omega^1(G)^G\otimes \gg$, where  $\Omega^1(G)^G$ is the space of left $G$-invariant $1$-forms on $G$. If now $(Y_j)_j$ is a basis for $\gg$, then we have an expansion 
\[\Theta=\sum_j \eta_j Y_j := \sum_j \eta_j \otimes Y_j\]
with $\eta_j\in \Omega^1(G)^G$ such that the $(\eta_j(e))_j$ form the dual basis of $(Y_j)_j$ as $\Theta_e =\mathrm{id}_\gg$.
\end{remark}

We apply Remark~\ref{rem:Maurer-Cartan-form} to the Cartan and Bruhat decompositions of $\gg$,
$$
\gg = \kk \oplus \ss = \bar\nn \oplus\mm\oplus\aa\oplus\nn,
$$
from Subsections~\ref{subsec:basic definitions} and \ref{sectionevolutionofpol}.
Using the Bruhat decomposition, the Maurer-Cartan form can be written as
\begin{equation}\label{thetabruhat}
\Theta = \sum_{\alpha\in \Sigma^+} (\eta_\alpha E_\alpha + \eta_{-\alpha} E_{-\alpha}) + \sum_{j=1}^r \eta^A_j A_j + \sum_{i=1}^m \eta^M_i M_i,
\end{equation}
where the coefficients $\eta_\alpha, \eta_{-\alpha}$, $\eta^A_j, \eta^M_j$ are left-invariant one-forms on $G$.
Using the Cartan decomposition, on the other hand, we have
\begin{equation}\label{thetacartan}
\Theta = \sum_{\alpha\in \Sigma^+} \eta^K_\alpha E_\alpha^K+ \sum_{i=1}^m  \eta^M_i M_i + 
\sum_{\alpha\in \Sigma^+} \eta^{\ss}_\alpha E_\alpha^{\ss} + \sum_{j=1}^r \eta^A_j A_j,
\end{equation}
where $\eta^K_\alpha, \eta^{\ss}_\alpha$ are left-invariant one-forms on $G$. 
We have the equalities 
$$
\eta^K_\alpha = \frac12 (\eta_\alpha + \eta_{-\alpha}),\quad
\eta^\ss_\alpha = \frac12 (\eta_\alpha - \eta_{-\alpha}),\quad
\eta_\alpha = \eta^K_\alpha + \eta^{\ss}_\alpha,\quad 
\eta_{-\alpha} = \eta^K_\alpha - \eta^{\ss}_\alpha
$$
for $\alpha\in \Sigma^+$.

We will henceforth use the same notation for the pull-back to $G\times \aa_+^*$ under $p_G$ of the forms $\eta_\alpha, \eta_{-\alpha}$, $\eta^A_j, \eta^M_j, \eta^K_\alpha, \eta^{\ss}_\alpha.$

Let $p_X:G\to X=G/K$ be the canonical projection.

\begin{lemma}\label{newlemma23}
    We have on $G\times \aa_+^*$, for a nonzero constant $c_0$, 
    \begin{equation}\label{pull-backofdx}
    (p_X \circ p_G)^* dx =  c_0 \bigwedge_{\alpha\in \Sigma^+} \eta^{\ss}_\alpha \wedge \bigwedge_{j=1}^r \eta^A_j.
     \end{equation}
\end{lemma}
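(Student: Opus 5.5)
We need $(p_X\circ p_G)^*dx = c_0 \bigwedge_\alpha \eta^\ss_\alpha\wedge\bigwedge_j \eta^A_j$ on $G\times\aa_+^*$. Since everything is pulled back along $p_G$, it suffices to prove the identity on $G$, i.e. $p_X^*dx = c_0\,\bigwedge_{\alpha\in\Sigma^+}\eta^\ss_\alpha\wedge\bigwedge_{j}\eta^A_j$. Both sides are $\dim X$-forms on $G$, so the plan is to show that both are left $G$-invariant, that both annihilate every vertical vector (tangent to the $K$-fibres), and that they agree, up to a nonzero constant, at the identity.

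For left invariance: $dx$ is $G$-invariant on $X$, and $p_X$ is $G$-equivariant for left translations, so $L_g^*p_X^*dx = p_X^*L_g^*dx = p_X^*dx$. The right-hand side is a wedge of left-invariant forms, hence left-invariant. Consequently both forms are determined by their values at $e$.

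At $e$, $T_eG=\gg=\kk\oplus\ss$ and $dp_X(e)$ is the projection onto $\ss\cong T_{x_o}X$ with kernel $\kk$. So $(p_X^*dx)_e$ is the pullback of a nonzero top form on $\ss$ via the projection $\gg\to\ss$; it is a nonzero element of $\Lambda^{\dim\ss}\gg^*$ that vanishes as soon as one argument lies in $\kk$. Now $\{\eta^K_\alpha(e),\eta^M_i(e),\eta^\ss_\alpha(e),\eta^A_j(e)\}$ is the dual basis to the basis $\{E^K_\alpha,M_i,E^\ss_\alpha,A_j\}$ adapted to $\kk\oplus\ss$ from (\ref{thetacartan}). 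In particular the forms $\eta^\ss_\alpha(e),\eta^A_j(e)$ vanish on $\kk$ and restrict to a basis of $\ss^*$. Hence $\bigwedge_\alpha\eta^\ss_\alpha\wedge\bigwedge_j\eta^A_j$ at $e$ is also the pullback under $\gg\to\ss$ of a nonzero top form on $\ss$.

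Since $\Lambda^{\dim\ss}\ss^*$ is one-dimensional, the two forms at $e$ are proportional with a nonzero constant $c_0$. By left invariance the equality then holds everywhere on $G$, and pulling back by $p_G$ gives the statement on $G\times\aa_+^*$.

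The only point needing a little care is the dual-basis claim: it must be the Cartan expansion (\ref{thetacartan}), not the Bruhat one, that provides the adapted dual basis. The bookkeeping $\eta^\ss_\alpha=\frac12(\eta_\alpha-\eta_{-\alpha})$ is consistent with this, since $E^\ss_\alpha$ and $E^K_\alpha$ together span $\gg_\alpha+\gg_{-\alpha}$. The constant $c_0$ is not needed explicitly; it encodes the normalization of $dx$ relative to the Killing-form basis.
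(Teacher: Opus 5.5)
Your argument is correct, but it takes a more direct route than the paper.

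The paper works on $X$. It first checks that the right-hand side is basic for the right $K$-action on $G$. It is $K$-invariant because it transforms by $\det(\mathrm{Ad}_k|_{\ss})$, which equals $1$ since $K$ is connected. It is annihilated by vectors tangent to the $K$-orbits, which is checked at $e$ and propagated by left invariance. The paper then uses the characterization of basic forms to descend it to a top form on $X$. That form is $G$-invariant, hence a multiple of $dx$ by uniqueness of invariant measures.

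You stay on $G$ instead. Both $p_X^*dx$ and $\bigwedge_\alpha\eta^{\ss}_\alpha\wedge\bigwedge_j\eta^A_j$ are left $G$-invariant, so it suffices to compare them at $e$. There both lie in the one-dimensional space of $\dim\ss$-forms on $\gg$ that vanish on $\kk$. This avoids the determinant computation and the descent theorem altogether. Right $K$-invariance of the right-hand side becomes a consequence of the identity rather than an input.

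What the paper's version makes explicit is that the right-hand side is itself a basic form, i.e.\ a $G$-invariant volume form on $X$ written via left-invariant forms on $G$. That viewpoint is reused in the appendix, where $d\mathrm{vol}_{G/K}$ is expressed through the $\eta^A_j$ and $\eta^{\ss}_\alpha$. For the lemma as stated, your argument is complete.
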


\begin{remark}
    The lemma also holds on $G$ if in (\ref{pull-backofdx}) we take $p_X^*dx$ on the left-hand side and on the 
    right-hand side the corresponding forms on $G$.
\end{remark}

\begin{proof}
   Note that the form $p_X^* dx$ is a basic form (see \cite{karshon:2016}) with respect to the right $K$-action on
$G$, that is, it is $K$-invariant and it is annihilated by vectors tangent to the $K$-orbits. Now,  note that, under the $K$-action,  the left $G$-invariant form on the right-hand side of (\ref{pull-backofdx}), transforms by a factor given by  $\det \left(\mathrm{Ad}_K:\ss\to \ss\right)$ which is equal to 1, since $K$ is connected. (Note that $K\ni x \mapsto \det \left(\mathrm{Ad}_x:\ss\to \ss\right) \in \mathbb{R^\times}$ gives a group homomorphism $K\to \mathbb{R}^\times$ whose image is a compact connected subgroup of $\mathbb{R}^\times$.) Therefore, the right-hand side of (\ref{pull-backofdx}) is $K$-invariant. 

In addition, if $\mathcal{O}_e$ denotes the $K$-orbit through $e\in G$, for the right $K$-action on $G$, then $T_e{\mathcal{O}_e}=\kk\subset \gg$ and vectors tangent to ${\mathcal O}_e$ at $e$ annihilate the right-hand side of (\ref{pull-backofdx}). 
Since this form  is left $G$-invariant and the left action of $G$ commutes with the right $K$-action we obtain that the right-hand side of (\ref{pull-backofdx}) is annihilated by tangent vectors to the $K$-orbits at every point. 

It follows (see, for example, Theorem 1.1 of \cite{karshon:2016}) that there exists a unique top form on the symmetric space $X$ of 
which the right-hand side of (\ref{pull-backofdx}) is the pull-back under the  projection $p_X$. 
Since $p_X$ commutes with the left $G$-action we
conclude that this top form is also left $G$-invariant
and therefore corresponds, up to scale, to the $G$-invariant measure on $X$, $dx$.  
\end{proof}

Let $p_B:G\to B=G/MAN\cong K/M$ be the canonical projection. The $K$-invariant $\dim(B)$-form $db$ on $B$, which is associated with the measure $db$ defined at the end of Section \ref{Notation}, is not left $G$-invariant but, from Proposition I.5.1 in \cite{helgason:1984}, we have that 
$$
e^{2\rho(A(x,b))} p_B^* db
$$
is left $G$-invariant, where $\rho$ is the Weyl vector given by the half-sum of the restricted roots (counted with multiplicities) and $A(x,b)$ is defined in (\ref{defAxb}).

\begin{lemma}\label{lemmadb}
    We have, for a nonzero constant $c_1$,
    $$
     (p_B\circ p_G)^* db = c_1 e^{-2\rho(A(x,b))} \bigwedge_{\alpha\in \Sigma^+} \eta_{-\alpha}.
    $$
\end{lemma}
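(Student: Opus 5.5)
The plan is to mirror the proof of Lemma \ref{newlemma23}, replacing $K$-basic forms with forms basic for the right $MAN$-action. Set
$$\omega := \bigwedge_{\alpha\in\Sigma^+}\eta_{-\alpha}$$
on $G$; it is left $G$-invariant. We will show that $\omega$ descends to a left $G$-invariant top form on $B=G/MAN$ up to a multiplicative character. We then compare it with the form $e^{2\rho(A(x,b))}p_B^*db$, which is left $G$-invariant by Proposition I.5.1 of \cite{helgason:1984}.

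First, $\omega$ is annihilated by the tangent vectors to the right $MAN$-orbits. At $e$ these are $\mm+\aa+\nn$, and the forms $\eta_{-\alpha}(e)$ are the dual basis elements of the $E_{-\alpha}$ in the Bruhat decomposition (\ref{thetabruhat}), so they vanish on $\mm+\aa+\nn$. Since the right action commutes with the left action, left invariance propagates this to every point. Next, by Lemma \ref{newlemman1}, $R_p^*\omega$ is $\omega$ composed with $\mathrm{Ad}_{p^{-1}}$, followed by projection to $\bar\nn$ along $\mm+\aa+\nn$. Because $\mm+\aa+\nn$ is $\mathrm{Ad}_{MAN}$-stable, this gives
$$R_p^*\omega=\det\left(\mathrm{Ad}_{p^{-1}}\text{ on }\gg/(\mm+\aa+\nn)\right)\omega .$$
The quotient $\gg/(\mm+\aa+\nn)\cong\bar\nn$ carries the $\aa$-weights $-\alpha$. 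Hence for $p=me^{a}n$ the factor is $e^{2\rho(a)}$: $N$ acts unipotently, giving $1$, and $M$ is compact, giving $\pm1$. The $M$ factor still needs checking, since $M$ may be disconnected. It should equal $1$ because $\mathrm{Ad}_m$ preserves each $\gg_{-\alpha}$ and the determinant there equals that on $\gg_\alpha$, which is $1$ since $\mathrm{Ad}_M$ preserves an orientation of $\nn\oplus\bar\nn$ up to the square of the same sign. Alternatively, one can note that $\sigma$ intertwines the $M$-action on $\gg_\alpha$ and $\gg_{-\alpha}$.

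Now write $g=k_1(g)e^{H(g)}n_1(g)$. Consider the form $e^{2\rho(H(g))}(p_B)^*db$ on $G$; by (\ref{defAxb}), $H(g)=A(x,b)$. I will show that this form is left $G$-invariant and has the same $MAN$-transformation as $\omega$, up to sign conventions in $H(gp)=H(g)+a$. Concretely, $H(gme^an)=H(g)+a$, so the factor $e^{2\rho(H)}$ rescales by $e^{2\rho(a)}$, exactly as $\omega$ does. Therefore the quotient $\omega/\bigl(e^{2\rho(H)}p_B^*db\bigr)$ is a function on $G$ that is left $G$-invariant and right $MAN$-invariant. Strictly, both forms are top degree in the directions transverse to the orbits, so they are proportional with a function coefficient. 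Such a function is constant, so we obtain
$$\omega=c\,e^{2\rho(A(x,b))}\,p_B^*db .$$
The constant $c$ is nonzero because both forms are nonvanishing. Pulling back by $p_G$ then gives the claim with $c_1=c^{-1}$.

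The main obstacle is getting the exponent sign right. One must carefully match $e^{2\rho(a)}$ from $\mathrm{Ad}$ on $\bar\nn$ (inverse direction via $\mathrm{Ad}_{p^{-1}}$) against the behaviour of $H$ and of Helgason's density $e^{2\rho(A(x,b))}$. I would verify this first at $g=e$ with $p=e^a$ explicitly. This check fixes the sign and confirms that the final factor is $e^{-2\rho(A(x,b))}$ as stated.
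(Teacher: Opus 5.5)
Your proof is correct and is essentially the paper's argument. Both rest on three facts: $\omega$ is left $G$-invariant, $e^{2\rho(A(x,b))}p_B^*db$ is left $G$-invariant (Helgason), and both forms are annihilated by $\mm+\aa+\nn$, so they are proportional with a left-invariant, hence constant, factor; the paper simply compares them at $e$ and left-translates. The right $MAN$-equivariance computation is not needed, because left invariance alone makes the ratio constant, so it serves only as a consistency check on the sign of the exponent. This is fortunate, since your $\sigma$-argument shows only that $\det(\mathrm{Ad}_m)$ on $\nn\oplus\bar\nn$ is a square, not that it equals $1$ on $\bar\nn$.
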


{}

\begin{remark}
    The lemma also holds on $G$ if  we take $p_B^*db$ on the left-hand side and on the 
    right-hand side the corresponding forms on $G$.
\end{remark}

\begin{proof}
    We have that $e^{2\rho(A(x,b))}p_B^* db$ is left $G$-invariant and at $e\in G$ it vanishes along $\mathrm{Ker}\, (Dp_B)_e= \mm + \aa + \nn$. It follows that $p_B^*db$ at $e\in G$ must be proportional to
    $$
    \bigwedge_{\alpha\in\Sigma^+} \eta_{-\alpha}.
    $$
    The result then follows by the transitive left action of $G$ on itself.
\end{proof}

Recall now that every equivalence class in $(T^*X)_\mathrm{reg}$ has a unique representative of the form 
$[g,\lambda]$, where $g\cdot M \in G/M$ and $\lambda\in \aa_+^*,$ and that the Hamiltonian flow on $(T^*X)_\mathrm{reg}$ is given by $\varphi_t^{X_h} ([g,\xi]) = [ge^{t\tilde \xi}, \xi]$, 
and let 
$$
\hat p: G\times \aa_+^* \to T^*X \cong G/M \times \aa_+^*
$$
be the canonical projection. 

\begin{lemma} \label{lemma1234}
We have
$$
\hat p^* d\eta_t = c_0 \Phi_t^* \left( \bigwedge_{\alpha\in \Sigma^+} \eta^{\ss}_\alpha \wedge \bigwedge_{j=1}^r \eta^A_j\right).
$$
\end{lemma}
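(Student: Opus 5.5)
The identity follows from a commutative diagram plus Lemma \ref{newlemma23}. The key observation is that the lifted flow $\Phi_t$ on $G\times \aa_+^*$ covers the geodesic flow on $(T^*X)_\mathrm{reg}$. Concretely, I will check that
\[
\hat p\circ \Phi_t = \varphi_t^{X_h}\circ \hat p .
\]
This holds because $\hat p(g,\xi)=[g,\xi]$ and $\hat p(ge^{t\tilde\xi},\xi)=[ge^{t\tilde\xi},\xi]=\varphi_t^{X_h}([g,\xi])$ by \eqref{e_15}.

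Next I will verify that the bundle projection satisfies $\pi\circ\hat p = p_X\circ p_G$. This is clear, since $\pi([g,\xi])=g\cdot K$.

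With these two identities, the definition \eqref{detat} gives
\[
\hat p^* d\eta_t = \hat p^*(\varphi_t^{X_h})^*\pi^*dx = \Phi_t^*\,\hat p^*\pi^* dx = \Phi_t^* (p_X\circ p_G)^* dx .
\]
Substituting the expression for $(p_X\circ p_G)^*dx$ from Lemma \ref{newlemma23} then yields the claimed formula, with the same constant $c_0$.

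There is no real obstacle here. The only point needing care is the well-definedness of the diagram on the regular set. The map $\hat p$ goes to $G/M\times\aa_+^*\cong (T^*X)_\mathrm{reg}$, so I must check that $\Phi_t$ descends through the right $M$-action. This holds because $M$ centralizes $A$: for $m\in M$ we have $gme^{t\tilde\xi}=ge^{t\tilde\xi}m$, so $\Phi_t$ is $M$-equivariant. Also, $\Phi_t$ preserves the $\aa_+^*$-component, so the flow stays in the regular set, consistent with \eqref{e_15}.
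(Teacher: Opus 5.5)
Your proposal is correct and follows essentially the same route as the paper: both use the identities $\pi\circ\hat p=p_X\circ p_G$ and $\hat p\circ\Phi_t=\varphi_t^{X_h}\circ\hat p$ to rewrite $\hat p^*d\eta_t$ as $\Phi_t^*(p_X\circ p_G)^*dx$, and then apply Lemma~\ref{newlemma23}. Your extra $M$-equivariance check is harmless but not needed, since the intertwining identity is verified pointwise on $G\times\aa_+^*$.
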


\begin{proof}
We have,
$$
\pi \circ \hat p = p_X \circ p_G
$$
and 
$$
\hat p \circ  \Phi_t = \varphi^{X_h}_t \circ \hat p.
$$
Therefore, 
$$\hat p^* d\eta_t = (\pi \circ \varphi_t^{X_h} \circ \hat p)^* dx = (\pi \circ \hat p \circ \Phi_t)^* dx = (p_X \circ p_G \circ \Phi_t)^*dx.$$     
The result then follows from Lemma \ref{newlemma23}.
\end{proof}

\begin{lemma}\label{lemmaprefinal}
We have
\begin{equation}\label{eqlemmaprefinal}
\lim_{t\to +\infty}  t^{-r} e^{-2\rho(A(x,b))} e^{-2t\rho(\tilde \xi)}\hat p^* d\eta_t = c_0 c_1^{-1} (-2)^{|\Sigma^+|}\left((p_B\circ p_G)^* db\right) \wedge d\mathrm{vol}_{\aa^*}.
\end{equation}   
\end{lemma}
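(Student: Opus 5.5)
Starting from Lemma~\ref{lemma1234}, I would compute $\Phi_t^*$ of each factor with Lemma~\ref{lemma245}. In $\Phi_t^*\Theta = \mathrm{Ad}_{e^{-t\tilde\xi}}\Theta + t\sum_j \hat\eta^A_j A_j$, the first term acts diagonally in the Bruhat expansion \eqref{thetabruhat}. Indeed $\mathrm{Ad}_{e^{-t\tilde\xi}}E_{\pm\alpha} = e^{\mp t\alpha(\tilde\xi)}E_{\pm\alpha}$, while $\mathrm{Ad}_{e^{-t\tilde\xi}}$ fixes $\mm+\aa$. Reading off coefficients gives
\[
\Phi_t^*\eta_{\alpha} = e^{-t\alpha(\tilde\xi)}\eta_\alpha,\qquad \Phi_t^*\eta_{-\alpha} = e^{t\alpha(\tilde\xi)}\eta_{-\alpha},\qquad \Phi_t^*\eta^A_j = \eta^A_j + t\,\hat\eta^A_j .
\]
Since $\eta^{\ss}_\alpha = \tfrac12(\eta_\alpha-\eta_{-\alpha})$, it follows that
\[
\Phi_t^*\eta^{\ss}_\alpha = \tfrac12\bigl(e^{-t\alpha(\tilde\xi)}\eta_\alpha - e^{t\alpha(\tilde\xi)}\eta_{-\alpha}\bigr) = -\tfrac12 e^{t\alpha(\tilde\xi)}\bigl(\eta_{-\alpha} - e^{-2t\alpha(\tilde\xi)}\eta_\alpha\bigr).
\]

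Next I would take the wedge product. Over $\alpha\in\Sigma^+$, counted with multiplicity, the prefactors multiply to $(-\tfrac12)^{|\Sigma^+|}e^{t\sum_\alpha\alpha(\tilde\xi)} = (-\tfrac12)^{|\Sigma^+|}e^{2t\rho(\tilde\xi)}$. Because $\xi\in\aa_+^*$, we have $\alpha(\tilde\xi)>0$, so every term containing some $\eta_\alpha$ carries a factor $e^{-2t\alpha(\tilde\xi)}$. These terms therefore tend to zero, uniformly on compacta, even after multiplication by the polynomial factors below. Expanding the product $\bigwedge_j(\eta^A_j + t\hat\eta^A_j)$ gives $t^r\bigwedge_j\hat\eta^A_j + O(t^{r-1})$, and $\bigwedge_j\hat\eta^A_j = d\mathrm{vol}_{\aa^*}$ by \eqref{lettherebedlambda}. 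Multiplying by $t^{-r}e^{-2t\rho(\tilde\xi)}$ and letting $t\to+\infty$, the form $t^{-r}e^{-2t\rho(\tilde\xi)}\hat p^*d\eta_t$ converges to
\[
c_0\bigl(-\tfrac12\bigr)^{|\Sigma^+|}\bigwedge_{\alpha\in\Sigma^+}\eta_{-\alpha}\wedge d\mathrm{vol}_{\aa^*}.
\]

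Finally I would invoke Lemma~\ref{lemmadb} in the form $\bigwedge_\alpha\eta_{-\alpha} = c_1^{-1}e^{2\rho(A(x,b))}(p_B\circ p_G)^*db$. After multiplying by $e^{-2\rho(A(x,b))}$, which does not depend on $t$, this gives the right-hand side of \eqref{eqlemmaprefinal}.

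There is a discrepancy in the constant. My derivation produces $(-\tfrac12)^{|\Sigma^+|}$, whereas the statement has $(-2)^{|\Sigma^+|}$. The constant will depend on exactly how $\eta^{\ss}_\alpha$ is normalised, so I would recheck that convention against the paper's stated identities: note that $\Theta$ contains $\eta^{\ss}_\alpha E^{\ss}_\alpha$ with $E^{\ss}_\alpha = E_\alpha - E_{-\alpha}$. If the normalisation differs from what I used, the base is $-2$ rather than $-\tfrac12$, and the factor is simply absorbed into the constant.

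The main obstacle is bookkeeping rather than analysis. I need to check that the ordering of the wedge factors matches the statement, which affects only the sign. I also need to justify that the error terms vanish in the limit as forms: all of them are smooth forms whose coefficients are bounded by $e^{-2t\min_\alpha\alpha(\tilde\xi)}$ times polynomials in $t$, and this bound is locally uniform on $G\times\aa_+^*$.
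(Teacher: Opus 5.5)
Your proof is correct and follows the paper's route. The paper also reads off $\Phi_t^*\eta_{\pm\alpha}=e^{\mp t\alpha(\tilde\xi)}\eta_{\pm\alpha}$ and $\Phi_t^*\eta^A_j=\eta^A_j+t\hat\eta^A_j$ from Lemma~\ref{lemma245}. It then uses $\alpha(\tilde\xi)>0$ and concludes from Lemmas~\ref{lemma1234}, \ref{lemmadb} and \eqref{lettherebedlambda}, without writing out the bookkeeping for the constant.

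On that constant you should not hedge. The identity $\eta^{\ss}_\alpha=\frac12(\eta_\alpha-\eta_{-\alpha})$ that you used is stated verbatim in the paper, and it is forced by $E^{\ss}_\alpha=E_\alpha-E_{-\alpha}$. So the leading part of $\Phi_t^*\eta^{\ss}_\alpha$ is $-\frac12 e^{t\alpha(\tilde\xi)}\eta_{-\alpha}$, and the limit is $c_0c_1^{-1}(-\frac12)^{|\Sigma^+|}\,(p_B\circ p_G)^*db\wedge d\mathrm{vol}_{\aa^*}$.

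The factor $(-2)^{|\Sigma^+|}$ in the statement appears to be an inverted factor in the paper. It carries over into Proposition~\ref{PP1} and into the constant of Theorem~\ref{ThQG}. It is harmless for the qualitative conclusions, but it cannot be ``absorbed'', because the constant is explicit.

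The wedge ordering also causes no sign issue. In Lemma~\ref{lemma1234} the $\eta^{\ss}$-block already precedes the $\eta^A$-block, which matches $db\wedge d\mathrm{vol}_{\aa^*}$.
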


\begin{proof}
We have
$$
\mathrm{Ad}_{e^{-t\tilde \xi}} E_\alpha = e^{-t\alpha(\tilde \xi)} E_\alpha,\,\, \mathrm{Ad}_{e^{-t\tilde \xi}} E_{-\alpha} = e^{+t\alpha(\tilde \xi)} E_{-\alpha}. 
$$
From the expressions for the Maurer-Cartan form in (\ref{thetabruhat}) and (\ref{thetacartan}) we see that Lemma \ref{lemma245} 
gives, explicitly in terms of the one-form coefficients of $\Theta$, that 
$$
\Phi_t^* \eta_\alpha = e^{-t\alpha(\tilde \xi)} \eta_\alpha,\,\,\,\,  \Phi_t^* \eta_{-\alpha} = e^{t\alpha(\tilde \xi)} \eta_{-\alpha}
$$
and
$$
\Phi_t^* \eta^A_j = \eta^A_j + t \hat \eta^A_j.
$$
Using the fact that $\alpha(\tilde \xi)>0$ for all $\alpha\in \Sigma^+$, 
the result now follows from Lemmas \ref{lemmadb}, \ref{lemma1234} and from (\ref{lettherebedlambda}).
\end{proof}

{}

The form $p_B^*db$ on $G$ is a basic form for the quotient $$p_1:G\to G/M,$$ since $B=G/MAN$. Therefore, 
$$
p_1^*\widehat db = p_B^* db,
$$
for a differential form $\widehat db$ on $G/M$. 

We are now ready to state the main result in this section.

\begin{proposition}\label{PP1} Under the identification $(T^*X)_\mathrm{reg} \cong G/M \times \aa_+^*$,
we have
$$\lim_{t \to +\infty} \, \, t^{-r} \, e^{-2 \left(\rho(A(x,b))+t\rho(\tilde \xi) \right)} \,  d\eta_t   = c_0 c_1^{-1} 
(-2)^{|\Sigma^+|}\, db \wedge \, 
d\lambda \, . 
$$
\end{proposition}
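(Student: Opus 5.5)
The plan is to deduce Proposition~\ref{PP1} from Lemma~\ref{lemmaprefinal} by descending the identity there along the projection $\hat p: G\times\aa_+^*\to G/M\times\aa_+^*\cong (T^*X)_\mathrm{reg}$. The map $\hat p$ is a surjective submersion whose fibers are the right $M$-orbits. Pull-back of differential forms under such a map is injective. Moreover, a form on $G\times\aa_+^*$ descends exactly when it is basic for the $M$-action.

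First, I would check that every form appearing in Lemma~\ref{lemmaprefinal} is the $\hat p$-pull-back of a form on $G/M\times\aa_+^*$.

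\emph{The forms $\hat p^* d\eta_t$.} These are pull-backs of $d\eta_t$ by construction.

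\emph{The scalar prefactor.} The factor $t^{-r}e^{-2\rho(A(x,b))}e^{-2t\rho(\tilde\xi)}$ is a function of $(x,b,\xi)=\varphi([g,\xi])$ together with $t$. It is therefore the pull-back of the corresponding function on $(T^*X)_\mathrm{reg}$.

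\emph{The right-hand side.} We have $(p_B\circ p_G)^*db=\hat p^*(p_{G/M}^*\widehat{db})$, since $p_B^*db=p_1^*\widehat{db}$ and $p_G$ is compatible with $p_1$. Also $d\mathrm{vol}_{\aa^*}$ on $G\times\aa_+^*$ is pulled back from $\aa_+^*$, hence from $G/M\times\aa_+^*$. Finally, $d\lambda=(2\pi)^{-r/2}d\mathrm{vol}_{\aa^*}$ by \eqref{lettherebedlambda}.

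Consequently, the left side of Lemma~\ref{lemmaprefinal} equals $\hat p^*\omega_t$, where
$\omega_t:=t^{-r}e^{-2(\rho(A(x,b))+t\rho(\tilde\xi))}d\eta_t$.
The right side equals $\hat p^*\bigl(c_0c_1^{-1}(-2)^{|\Sigma^+|}(2\pi)^{r/2}\,\widehat{db}\wedge d\lambda\bigr)$.

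Next, I would transfer the limit. The pull-back $\hat p^*$ is linear and injective fiberwise on exterior powers: $D\hat p$ is surjective at each point, so its transpose is injective. Locally, choose a smooth section $s$ of $\hat p$, which exists because $G\to G/M$ is a principal bundle. Then $\omega_t=s^*\hat p^*\omega_t$ on each such open set. The limit in Lemma~\ref{lemmaprefinal} is pointwise convergence of coefficients with respect to a left-invariant coframe. I would make sure it is locally uniform, or at least pointwise, so that it survives applying $s^*$. This gives $\lim_t\omega_t=c_0c_1^{-1}(-2)^{|\Sigma^+|}(2\pi)^{r/2}\,db\wedge d\lambda$ locally, and hence globally.

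The main obstacle is purely bookkeeping: the constant. The statement has $d\lambda$ where Lemma~\ref{lemmaprefinal} has $d\mathrm{vol}_{\aa^*}$, and these differ by $(2\pi)^{r/2}$. I expect this factor either to be absorbed into the unspecified nonzero constant $c_0$ or $c_1$, or to require reading Proposition~\ref{PP1} as holding up to that normalization. If an exact match is needed, I would redefine $c_1$ in Lemma~\ref{lemmadb} accordingly.

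A second point to verify is that $\widehat{db}$ on $G/M$ corresponds to $db$ on the $B$-factor under $G/M\times\aa_+^*\cong X\times\aa_+^*\times B$. This holds because $g\mapsto g\cdot MAN$ factors through $G/M$ as the $B$-component of $\varphi$.
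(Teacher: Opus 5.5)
Your argument is correct and is essentially the paper's proof. The paper's proof consists only of the remark that the form in Lemma~\ref{lemmaprefinal} is basic for $\hat p$ (using $p_B=p_2\circ p_1$) and hence descends to $G/M\times\aa_+^*$; your injectivity and local-section bookkeeping just spells this out. Your $(2\pi)^{r/2}$ observation is also right, since the paper passes silently from $d\mathrm{vol}_{\aa^*}$ to $d\lambda$. However, the factor cannot be absorbed by redefining $c_0$ or $c_1$, because these are pinned down by Lemmas~\ref{newlemma23} and~\ref{lemmadb} and computed in the appendix. What the descent from Lemma~\ref{lemmaprefinal} actually yields is the displayed identity with $d\mathrm{vol}_{\aa^*}=(2\pi)^{r/2}\,d\lambda$ in place of $d\lambda$.
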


\begin{proof}
    This follows from the fact that the form on $G\times \aa_+^*$ given by (\ref{eqlemmaprefinal}) is basic with respect to the projection $\hat p: G\times \aa_+^*\to (T^*X)_\mathrm{reg}\cong G/M \times \aa_+^*.$ Note that 
    if $p_2:G/M \to G/MAN$ is the natural projection then $p_B = p_2 \circ p_1.$
\end{proof}

For completeness, we compute the constants $c_0$ and $c_1$ in Appendix \ref{appvolumeforms}.

{}

\section{Lifting the geodesic flow to the quantum bundle: the Fourier-Helgason transform and geometric
quantization}

In the present section we will describe how the evolution of the spaces of polarized functions along the family of polarizations $\mathcal{P}_t, t\geq 0,$ can be described by a transform which we call quantum geodesic transform. 
This is an analog, for Hamiltonian evolution in real time, of the Segal-Bargmann-Hall coherent state transform for Lie groups of compact type. The relation to the Fourier-Helgason transform is obtained in the limit $t\to +\infty.$

\subsection{The Fourier-Helgason transform}
Recall that, as in Section \ref{Notation}, we use the convenient notation of Chapter II, 3.4 of \cite{helgason:2008}, for the Iwasawa decompositions, $G=KAN$ and $G=NAK$, 
$$
g=k_1(g) e^{H(g)} n_1(g) = n_2(g) e^{A(g)} k_2(g)  \, . 
$$
The following functions are joint (generalized) eigenfunctions of $G$--invariant differential operators on $X=G/K$ (see Proposition 3.14 in Paragraph 3, Chapter II in \cite{helgason:2008})
\begin{equation}
\label{e_spectral}
e_{\lambda, b}(x) := e^{(i \lambda + \rho)(A(x, b))} \, ,
    \end{equation} 
where $A(x,b)$ is defined in (\ref{defAxb}). For the Laplace-Beltrami operator on $X$ with respect to the invariant  metric \footnote{If $\kappa$ denotes the Killing form we have $\langle v,w \rangle = -\kappa(v, \sigma(w))$ where $\sigma$ is the Cartan involution. See, for instance, Section 1 in \cite{anker-ostellari}.}
given by the inner product $\langle \cdot, \cdot \rangle$ on $\gg$, $\Delta$, we have
\begin{equation}
\label{e_eig}
- \Delta \, e_{\lambda, b}(x) =  \left(|\lambda|^2 + |\rho|^2  \right) \, e_{\lambda, b}(x).  \, 
    \end{equation}
   
Let us also, for later convenience, introduce the pull-back of $e_{\lambda, b}$ to $T^*X$,
\begin{equation}
\label{e_eigTB}
E_{\lambda, b}([g, \xi]) := \pi_{TX}^*(e_{\lambda, b})([g, \xi]) = e^{(i \lambda + \rho)(A(x,b))} ,
    \end{equation}
where again $x=g\cdot K, b=k_1(g)b_o.$

We have $\mathfrak{a}^*_+ \cong \widehat G_{\rm sph}$ (the space of equivalence classes of unitary spherical representations of $G$)
and the generalized eigenfunctions  (\ref{e_spectral})
are the building blocks of the spectral and isotypical decomposition (\ref{e1}), which takes the form
\begin{equation}   \label{e1'}
L^2(X, d x) = \int_{\mathfrak{a}^*_+}  \, {\mathcal{H}_\lambda} \, \frac{d\lambda}{\vert c(\lambda) \vert^2},
\end{equation}
where $c$ is the Harish-Chandra $c$--function, 
with ${\mathcal{H}_\lambda}$ being given by the 
image of the Poisson transform defined on  $L^2(B, db)$ by
\begin{equation}
\label{e-pt}
\forall x\in X:\quad {\mathcal{P}_\lambda}(F)(x) = \int_B \,  e^{(i \lambda + \rho)(A(x, b))} \, F(b) db   \, .    
\end{equation}

\begin{definition}
\label{def_FT}
The Fourier--Helgason (FH) transform 
is the following map
\begin{eqnarray}
\mathcal{F} \, : \, L^2(X, dx) & \longrightarrow & L^2 \left(\mathfrak{a}_+^* \times  B, \frac{d\lambda db}{|c(\lambda)|^2}  \right) \nonumber \\
f & \mapsto & \widetilde f   \\
\widetilde f (\lambda, b) &=& \int_{X} \,
e^{(-i \lambda + \rho)(A(x,b))}  \, f(x) \, dx \, . \nonumber
\end{eqnarray}        
\end{definition}

From Chapter III, Section 1 of \cite{helgason:2008}, we have the following.

\begin{theorem}[Helgason]
\label{th_FT}
The FH transform is a unitary isomorphism and its inverse is given by
\begin{equation}
\label{e_IFT}
f(x) = \int_{\mathfrak{a}_+^* \times B} \, 
e^{(i \lambda + \rho)(A(x,b))}  \, \widetilde f(\lambda, b) \, \frac{d\lambda db }{|c(\lambda)|^2}  \, . 
\end{equation}  
\end{theorem}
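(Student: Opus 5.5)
The plan is to reproduce Helgason's argument from Chapter III of \cite{helgason:2008}. The strategy has three parts. First, prove the Plancherel identity $\|f\|_{L^2(X)}^2=\int_{\aa_+^*\times B}|\widetilde f(\lambda,b)|^2\,\frac{d\lambda\,db}{|c(\lambda)|^2}$ on a dense subspace. Second, extend $\mathcal F$ by continuity to an isometry. Third, show the isometry is surjective and identify its inverse with the Poisson-type integral (\ref{e_IFT}). Throughout, the dense subspace is $C_c^\infty(X)$, on which the integral defining $\widetilde f$ converges absolutely because $x\mapsto e^{(-i\lambda+\rho)(A(x,b))}$ is smooth.

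\textbf{Step 1: reduce to $K$-invariant functions.} For $K$-invariant $f$, averaging over $b\in B$ gives $\widetilde f(\lambda,b)=\int_X f(x)\,\varphi_{-\lambda}(x)\,dx$. Here $\varphi_\lambda(x)=\int_B e^{(i\lambda+\rho)(A(x,b))}\,db$ is Harish-Chandra's spherical function. So in this case $\mathcal F$ reduces to the spherical transform.

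\textbf{Step 2: spherical inversion and Plancherel.} This is the deep input. I would invoke Harish-Chandra's theorem in the form of the Paley--Wiener theorem (Gangolli/Helgason) combined with the asymptotic expansion $\varphi_\lambda(a)\sim c(\lambda)e^{(i\lambda-\rho)\log a}+\dots$. Together these give
\[
f(x)=|W|^{-1}\int_{\aa^*}\widetilde f(\lambda)\,\varphi_\lambda(x)\,\frac{d\lambda}{|c(\lambda)|^2}
\]
for $K$-bi-invariant $f\in C_c^\infty$. The normalization of $d\lambda$ fixed in Section~\ref{Notation} is exactly what makes the constant equal to $1$ once we restrict to $\aa_+^*$.

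\textbf{Step 3: pass to general $f$.} For general $f\in C_c^\infty(X)$, apply Step 2 to the $K$-invariant function $f^\ast\ast f$ evaluated at the origin $x_o$. This yields the isometry on $C_c^\infty(X)$. The key inputs are the convolution identity $\widetilde{(f_1\times f_2)}(\lambda,b)=\widetilde f_1(\lambda,b)\,\widetilde{f_2}(\lambda)$ for $K$-invariant $f_2$, and Fubini together with the $G$-equivariance property $A(gx,gb)=A(x,b)+A(g x_o,gb)$. The inversion formula (\ref{e_IFT}) on $C_c^\infty$ then follows by polarization.

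\textbf{Step 4: surjectivity.} This is the main obstacle. I would identify the range with functions satisfying the Weyl-group symmetry relating $\widetilde f(s\lambda,\cdot)$ to $\widetilde f(\lambda,\cdot)$ through intertwining operators; restricting to $\aa_+^*$ removes this redundancy. Then I would show that the orthogonal complement of the range is zero. To do this, take $F\perp\mathcal F(C_c^\infty(X))$. Then $\int F(\lambda,b)\,e^{(i\lambda+\rho)(A(x,b))}\,|c|^{-2}\,d\lambda\,db=0$ for all $x$. Expanding $F$ in $K$-types on $B$ and using injectivity of the Poisson transform $\mathcal P_\lambda$ for generic $\lambda$, we conclude $F=0$. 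This is Helgason's Theorem III.1.5, and I would cite it rather than redo it.
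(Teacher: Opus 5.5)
The paper does not prove this statement; it imports it from Chapter~III, \S1 of \cite{helgason:2008}. Your decision to cite Helgason's Theorem~III.1.5 for the hard part is therefore exactly the paper's treatment. Steps~1--3 are a sound outline of the isometry and the inversion formula on $C_c^\infty(X)$, done in the reverse order from Helgason, who first obtains (\ref{e_IFT}) by spherical inversion of $K$-averages of translates of $f$. Two small repairs are needed there.
\begin{itemize}
\item The convolution identity you list does not apply directly to $f^*\ast f$, since $f^*$ is not right $K$-invariant. What you need is the product formula $\varphi_\lambda(g^{-1}h)=\int_B e^{(-i\lambda+\rho)(A(gx_o,b))}e^{(i\lambda+\rho)(A(hx_o,b))}\,db$. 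Its proof uses the Jacobian $d(g^{-1}b)/db=e^{2\rho(A(gx_o,b))}$ as well as the cocycle identity.
\item Getting (\ref{e_IFT}) from the polarized Plancherel identity needs a Fubini step. This is justified by the rapid decay of $\widetilde f(\lambda,b)$ in $\lambda$, uniformly in $b$, which follows from $\widetilde{\Delta f}=-(|\lambda|^2+|\rho|^2)\widetilde f$.
\end{itemize}

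The genuine gap is your own sketch of Step~4, which would not prove surjectivity as written. From $F\perp\mathcal{F}(C_c^\infty(X))$ you cannot conclude that $\int F(\lambda,b)\,e^{(i\lambda+\rho)(A(x,b))}\,|c(\lambda)|^{-2}\,d\lambda\,db=0$ for all $x$. For a general $F\in L^2(\mathfrak{a}_+^*\times B,|c(\lambda)|^{-2}d\lambda\,db)$ this integral need not converge: the kernel has modulus $e^{\rho(A(x,b))}$, independent of $\lambda$, while $|c(\lambda)|^{-2}d\lambda$ has infinite mass.

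More seriously, even granting that identity, what vanishes is a superposition over $\lambda$ of the Poisson transforms $\mathcal{P}_\lambda(F(\lambda,\cdot))$. Injectivity of each single $\mathcal{P}_\lambda$ says nothing about such a superposition. A $K$-type expansion does not help, because it decomposes in $b$, not in $\lambda$.

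The missing idea is to separate the spectral parameter first:
\begin{itemize}
\item Replace $f$ by $f\times\phi$ with $\phi\in C_c^\infty(X)$ $K$-invariant. Since $\widetilde{f\times\phi}=\widetilde f\,\widetilde\phi$, orthogonality gives $\int_{\mathfrak{a}_+^*}\overline{\widetilde\phi(\lambda)}\,h_f(\lambda)\,|c(\lambda)|^{-2}d\lambda=0$ for all such $\phi$. Here $h_f(\lambda)=\int_B F(\lambda,b)\overline{\widetilde f(\lambda,b)}\,db$ is integrable against $|c(\lambda)|^{-2}d\lambda$ by Cauchy--Schwarz.
\item The spherical transforms $\widetilde\phi$ form a conjugation-closed, point-separating algebra of $W$-invariant functions, dense in $C_0(\mathfrak{a}^*/W)$ by Stone--Weierstrass. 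Hence $h_f=0$ almost everywhere.
\item Running $f$ over a countable dense set, for almost every $\lambda$ you get $\int_X\overline{f(x)}\,\mathcal{P}_\lambda(F(\lambda,\cdot))(x)\,dx=0$ for all $f$.
\item Only at this point does injectivity of $\mathcal{P}_\lambda$ on $L^2(B)$ for real $\lambda$ yield $F(\lambda,\cdot)=0$.
\end{itemize}
The Weyl-group and intertwining-operator description of the range that you mention is not needed for this argument.
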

We will also need the following result from
Chapter II, Theorem, 3.16.
\begin{theorem}[Helgason]
\label{th_HeThm}
Let $\lambda \in \mathfrak{a}_\mathbb{C}^*$ be such that
${\rm Re}(\langle i  \lambda, \alpha   \rangle) > 0 \,$ for 
$\alpha \in \Sigma^+ $. Then if $H \in \mathfrak{a}_+$, $a_t := e^{t H}$,
\begin{equation}\label{e_IFT'}
\lim_{t \to \infty} \,  e^{(-i\lambda + \rho)(tH)}\int_{B} \, 
e^{(i \lambda + \rho)(A(a_t \, x_o,b))}  \, F(b) \, db  \, =
c(\lambda) \, F(b_o) \, ,
\end{equation}  
where $F \in C(B). $
\end{theorem}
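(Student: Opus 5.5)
The statement to be proved is Helgason's limit formula, Theorem~\ref{th_HeThm}: for $\mathrm{Re}\langle i\lambda,\alpha\rangle>0$ on $\Sigma^+$ and $a_t=e^{tH}$, $H\in\aa_+$,
\[
\lim_{t\to\infty} e^{(-i\lambda+\rho)(tH)}\int_B e^{(i\lambda+\rho)(A(a_t x_o,b))}F(b)\,db = c(\lambda)F(b_o).
\]
The plan is to rewrite the integral over $B$ as an integral over $\bar N$, using the Bruhat big cell $\bar N MAN\subset G$, which is open and dense. The map $\bar n\mapsto k_1(\bar n)M$ is a diffeomorphism of $\bar N$ onto an open subset of $B$ of full measure. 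Under it the measure transforms by $db = e^{-2\rho(H(\bar n))}d\bar n$, where $d\bar n$ is normalized so that $\int_{\bar N}e^{-2\rho(H(\bar n))}d\bar n=1$; this is Helgason's normalization, consistent with the definition of $db$.

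Next compute $A(a_t x_o, b)$ for $b=k_1(\bar n)M$. By (\ref{defAxb}), $A(gK,kM)=-H(g^{-1}k)$. Since $k_1(\bar n)=\bar n\, n_1(\bar n)^{-1}e^{-H(\bar n)}$, we get
\[
H(a_t^{-1}k_1(\bar n)) = H(a_t^{-1}\bar n a_t) - tH - H(\bar n),
\]
using that $a_t^{-1}\bar n = (a_t^{-1}\bar n a_t)a_t^{-1}$ and that right multiplication by elements of $AN$ shifts $H$ in the expected way. Hence $A(a_tx_o,b) = tH + H(\bar n) - H(a_t^{-1}\bar n a_t)$. The factor $e^{(i\lambda+\rho)(tH)}$ therefore combines with the prefactor $e^{(-i\lambda+\rho)(tH)}$ to give $e^{2\rho(tH)}$.

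For the Jacobian, substitute $\bar m = a_t^{-1}\bar n a_t$. Then $d\bar n = e^{-2\rho(tH)}d\bar m$, because $\mathrm{Ad}(a_t)$ scales $\gg_{-\alpha}$ by $e^{-t\alpha(H)}$. This cancels the $e^{2\rho(tH)}$, and we obtain
\[
\int_{\bar N} e^{(i\lambda+\rho)(H(\bar n_t))}\, e^{-(i\lambda+\rho)(H(\bar m))}\, e^{-2\rho(H(\bar n_t))}\, F(k_1(\bar n_t)M)\, d\bar m,
\]
where $\bar n_t=a_t\bar m a_t^{-1}$. The exponentials in $H(\bar n_t)$ simplify to $e^{(i\lambda-\rho)(H(\bar n_t))}$.

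As $t\to\infty$ we have $\bar n_t\to e$, so $H(\bar n_t)\to 0$ and $k_1(\bar n_t)M\to b_o$. Pointwise, the integrand therefore tends to $F(b_o)e^{-(i\lambda+\rho)(H(\bar m))}$. The limit is then $F(b_o)\int_{\bar N}e^{-(i\lambda+\rho)(H(\bar m))}d\bar m = c(\lambda)F(b_o)$, by the Gindikin--Karpelevich integral definition of $c(\lambda)$.

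\textbf{Main obstacle: justifying the interchange of limit and integral.} I would use dominated convergence. Since $F$ is continuous on compact $B$, it is bounded. Also $|e^{(i\lambda-\rho)(H(\bar n_t))}| = e^{(-\mathrm{Im}\lambda-\rho)(H(\bar n_t))}$, and $H(\bar n)$ lies in the cone spanned by the positive coroots (Helgason's lemma that $\langle \alpha, H(\bar n)\rangle$-type positivity holds). With the hypothesis $\mathrm{Re}\langle i\lambda,\alpha\rangle>0$, i.e.\ $-\mathrm{Im}\lambda$ positive on $\aa_+$ directions, this gives $\rho(H(\bar n))\ge0$ and a uniform bound on the first factor. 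The dominating function is then $|e^{-(i\lambda+\rho)(H(\bar m))}|$. This is integrable exactly in the stated range, by convergence of the $c$-function integral; that convergence is the key input I would cite from Helgason (Ch.~IV of \cite{helgason:1984}). I would check carefully the sign conventions relating $\mathrm{Im}\lambda$ to the positivity of $H(\bar n)$ at this step.
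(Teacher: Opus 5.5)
Your reduction is Helgason's route, which is what the paper relies on: it cites the result, and the proof of Theorem~\ref{ThQG} alludes to the same argument. The steps you carry out are correct: the formula $A(a_tx_o,k_1(\bar n)M)=tH+H(\bar n)-H(a_t^{-1}\bar n a_t)$, the substitution $\bar m=a_t^{-1}\bar n a_t$ with $d\bar n=e^{-2\rho(tH)}d\bar m$, the pointwise limit, and the identification with the Gindikin--Karpelevich integral.

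\textbf{The gap is the domination step, and it is not just a sign to be checked.} Put $\mu:=\mathrm{Re}(i\lambda)=-\mathrm{Im}\lambda$; the hypothesis says $\mu\in\aa^*_+$. Then $|e^{(i\lambda-\rho)(H(\bar n_t))}|=e^{(\mu-\rho)(H(\bar n_t))}$. The positivity of $H(\bar n_t)$ gives $\mu(H(\bar n_t))\ge 0$, which works \emph{against} you. You do get a bound by $1$ when $\rho-\mu\in\overline{\aa^*_+}$, i.e.\ for small $\mathrm{Im}\lambda$, but not in general. Since $\bar m\mapsto a_t\bar m a_t^{-1}$ is a bijection of $\bar N$, the first factor is unbounded in $\bar m$ for every fixed $t$ once $\mu$ is large. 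For instance, in $SL(2,\RR)$ take $\bar m$ with lower-left entry $x$, $H=\mathrm{diag}(1,-1)$ and $\mu=2s\rho$. The factor equals $(1+x^2e^{-4t})^{s-1/2}$, which is unbounded for $s>1/2$. So $\sup_B|F|\,|e^{-(i\lambda+\rho)(H(\bar m))}|$ is not a majorant. As written, your argument covers only part of the parameter range; that part suffices for the paper's $\epsilon\to 0$ application, but not for the statement.

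\textbf{What is missing is a $t$-uniform comparison of $H(a_t\bar m a_t^{-1})$ with $H(\bar m)$.} For $\nu\in\overline{\aa^*_+}$ and $t\ge 0$ one needs
\[0\le \nu(H(a_t\bar m a_t^{-1}))\le \nu(H(\bar m)).\]
First take $\nu$ to be the highest weight of a finite-dimensional spherical representation. Then $e^{\nu(H(g))}=\|\pi_\nu(g)v_\nu\|/\|v_\nu\|$. Write $\pi_\nu(\bar m)v_\nu=v_\nu+\sum_\beta w_\beta$, where $w_\beta$ has weight $\nu-\beta$ and $\beta\neq 0$ is a sum of positive roots. This gives $\|\pi_\nu(a_t\bar m a_t^{-1})v_\nu\|^2=\|v_\nu\|^2+\sum_\beta e^{-2t\beta(H)}\|w_\beta\|^2$, which is nonincreasing in $t$ and at least $\|v_\nu\|^2$. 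Positive homogeneity and continuity then give the inequality for all $\nu\in\overline{\aa^*_+}$. This is Helgason's lemma, and it is the same weight computation as in the proof of Theorem~\ref{thmpolarization}.

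\textbf{How to finish.} Choose $\delta\in(0,1)$ with $\rho-\delta\mu\in\overline{\aa^*_+}$, and split $\mu-\rho=(1-\delta)\mu-(\rho-\delta\mu)$. This yields
\[|e^{(i\lambda-\rho)(H(\bar n_t))}|\le e^{(1-\delta)\mu(H(\bar n_t))}\le e^{(1-\delta)\mu(H(\bar m))}.\]
Hence the integrand is dominated by $\sup_B|F|\,e^{-(\delta\mu+\rho)(H(\bar m))}$. This is integrable by the Gindikin--Karpelevich convergence criterion, since $\delta\mu\in\aa^*_+$. It is the $\delta$-dependent majorant mentioned in the proof of Theorem~\ref{ThQG}. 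With it, dominated convergence applies and your proof is complete.
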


This implies the following. 

\begin{corollary}
\label{cor2}
Let $\lambda \in \mathfrak{a}_\mathbb{C}^*$ be such that
${\rm Re}(\langle i  \lambda, \alpha   \rangle) > 0 \,$ for 
$\alpha \in \Sigma^+ $.
Let $H\in \aa_+$ and  $x_t := g_0 e^{tH} \cdot K$, for a fixed $g_0\in G$. Then,
\begin{equation}
\label{e_IFT''} \nonumber
\lim_{t \to \infty} \,  e^{(-i\lambda + \rho)(tH)}\int_{B} \, 
e^{(i \lambda + \rho)(A(x_t ,b))}  \, F(b) \, db  \, =
c(\lambda) \, F(g_0\cdot b_o) \, e^{(i\lambda - \rho)(H(g_0))} .
\end{equation}  
\end{corollary}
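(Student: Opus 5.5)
The plan is to reduce Corollary \ref{cor2} to Theorem \ref{th_HeThm} by a change of variables $b = g_0\cdot b'$ on $B$, using the $G$-equivariance properties of $A(x,b)$. Two standard facts will be needed.

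The first is the cocycle identity
\[
A(g x, g b) = A(x,b) + A(g\cdot x_o, g\cdot b), \qquad g\in G,\ x\in X,\ b\in B.
\]
I will verify it directly from the definition $A(x,b)=-H(g_x^{-1}k)$, with $x=g_x\cdot K$ and $b=k\cdot M$, together with the Iwasawa decomposition (\ref{iwasawas}). Alternatively, it can be quoted from Helgason, Ch.~II, \S 3.

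The second is the Jacobian of the $G$-action on $B$. The paper already uses (Proposition I.5.1 in \cite{helgason:1984}) that $e^{2\rho(A(x,b))}db$ is $G$-invariant. Evaluating this at $x=x_o$, where $A(x_o,b)=0$, gives
\[
d(g\cdot b') = e^{-2\rho(A(g\cdot x_o,\, g\cdot b'))}\, db'.
\]

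With these in hand, write $x_t = g_0 a_t\cdot x_o$ with $a_t=e^{tH}$ and substitute $b = g_0\cdot b'$ in the integral. The cocycle identity with $g=g_0$ and $x=a_t\cdot x_o$ splits the exponent. The integral then becomes
\[
\int_B e^{(i\lambda+\rho)(A(a_t\cdot x_o, b'))}\, F_{g_0}(b')\, db',
\qquad
F_{g_0}(b') := F(g_0\cdot b')\, e^{(i\lambda-\rho)(A(g_0\cdot x_o,\, g_0\cdot b'))}.
\]
Here the factor $e^{(i\lambda+\rho)(A(g_0x_o,g_0b'))}$ coming from the cocycle combines with the Jacobian $e^{-2\rho(\cdot)}$ to give the exponent $i\lambda-\rho$.

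The function $F_{g_0}$ is continuous on $B$, because $F$ is continuous and $A$ is smooth. So Theorem \ref{th_HeThm} applies verbatim with $F$ replaced by $F_{g_0}$. After multiplying by $e^{(-i\lambda+\rho)(tH)}$, the limit is $c(\lambda)F_{g_0}(b_o) = c(\lambda)F(g_0\cdot b_o)\,e^{(i\lambda-\rho)(A(g_0\cdot x_o,\, g_0\cdot b_o))}$.

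It remains to identify $A(g_0\cdot x_o, g_0\cdot b_o)$ with $H(g_0)$. The point $[g_0,\lambda]$ has $x=g_0\cdot K$ and $b=g_0\cdot MAN = k_1(g_0)\cdot M = g_0\cdot b_o$, so (\ref{defAxb}) gives exactly $A(x,b)=H(g_0)$. This yields the stated formula.

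The only delicate points are sign conventions: the direction of the cocycle identity and whether the Jacobian carries $e^{-2\rho}$ or $e^{+2\rho}$. I would check both carefully against the definition $A(x,b)=-H(g^{-1}k)$ before finalizing, since a sign slip would turn $i\lambda-\rho$ into $i\lambda+3\rho$ or $i\lambda+\rho$.
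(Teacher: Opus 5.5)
Your proposal is correct and follows essentially the same route as the paper: the substitution $b=g_0\cdot b'$, the cocycle identity $A(gx,gb)=A(x,b)+A(g\cdot x_o,g\cdot b)$, Theorem \ref{th_HeThm} applied to the modified function $F_{g_0}$, and the identification $A(g_0\cdot x_o,g_0\cdot b_o)=H(g_0)$. The only cosmetic difference is that you write the Jacobian globally as $e^{-2\rho(A(g_0\cdot x_o,\,g_0\cdot b'))}$ and absorb it into the exponent $i\lambda-\rho$ before taking the limit, whereas the paper keeps $J(g_0;b')$ and evaluates it only at $b_o$ via Helgason's formula; your signs are correct.
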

\begin{proof}
We will need the following identity (see Chapter II, Paragraph 3, Section 5 in \cite{helgason:2008}), 
\begin{equation}
\label{e_32}
\forall x \in X, g \in G , b \in B \,:\quad A(gx, gb) = A(x,b) + A(gK, gb) \, .   
\end{equation}
Then we find
$$
A(g_0e^{tH}K, b) = A(g_0e^{tH}K, g_0(g_0^{-1}b)) = A(e^{tH}K, g_0^{-1}b) +  A(g_0K, b)  , 
$$
and therefore, writing $b':=g_0^{-1} b$ and $J(g_0;b'):=\frac{db}{db'}(b')$,
\begin{eqnarray*}
&&  \int_{B} \, 
e^{(i \lambda + \rho)(A({g_0e^{tH}K},b))}  \, F(b) \, db  \\
&=& \int_{B} \, 
e^{(i \lambda + \rho)(A(e^{tH} K, g_0^{-1} b))} e^{(i \lambda + \rho)(A(g_0 K, b))}  \, F(b) \, db \\
&=& \int_{B} \, 
e^{(i \lambda + \rho)(A(e^{tH} K,b'))} e^{(i \lambda + \rho)(A(g_0 K, g_0b'))}  \, F(g_0b')J(g_0;b') \, db'.
\end{eqnarray*}  
With 
\[F_{g_0}(b'):=e^{(i \lambda + \rho)(A(g_0 K, g_0b'))}  \, F(g_0b')J(g_0;b')\]
Theorem~\ref{th_HeThm} shows that
\begin{eqnarray*}
\label{e_33}
&& \lim_{t \to \infty} \,  e^{(-i\lambda + \rho)(tH)}\int_{B} \, 
e^{(i \lambda + \rho)(A( { g_0 e^{tH} K} ,b))}  \, F(b) \, db  \, \\
&=& \lim_{t \to \infty} \,   e^{(-i\lambda + \rho)(tH)}  \int_{B} \, 
e^{(i \lambda + \rho)(A(e^{tH} K,  b'))} \, F_{g_0}(b')  \, db' \\
&=& c(\lambda) \, F_{g_0}(b_o) \\
&=& c(\lambda) \, e^{(i \lambda + \rho)(A(g_0 K, g_0b_o))}  \, F(g_0b_o)J(g_0;b_o)
\, .
\end{eqnarray*}  
Recall the expression for the Jacobian $J({ g_0};b_o)$ in terms of the function $A(x,b)$ given by (see  (51) in Chapter II, Paragraph 3, Section 5 in \cite{helgason:2008}):
\[\frac{d(g^{-1}(b'))}{db'}=e^{2\rho(A(gK,b'))}.\]
Applying this with $g^{-1}=g_0$ gives
\[J(g_0;b_o)= e^{2\rho(A(g_0^{-1}K,b_o))}= e^{2\rho(A(g_0^{-1}))}=e^{-2\rho(H(g_0))}. \]
On the other hand, writing $g_0=k_0a_0n_0$ so that $H(g_0)=\log a_0$, we have 
\[A(g_0K,g_0b_o)=A(g_0K,k_0b_o)=A(k_0^{-1}g_0)=A(a_0n_0)=\log a_0= H(g_0). \]
Inserting the last two formulas into the result for the limit now proves the claim.
\end{proof}

{}

\subsection{The quantum bundle and the quantum geodesic transform}
We will now study the geometric quantization of $T^*X$ along the family of polarizations $\mathcal{P}_t, t\geq 0.$ The prequantum line bundle, in this case, is just the trivial bundle $L = T^*X \times \CC$, equipped with the standard Hermitian structure and prequantum connection 
$$
\nabla = d -i\theta, 
$$
where $\theta$ is the Liouville  (or tautological) one-form on $T^*X$, so that $d\theta = \omega$ and the curvature of $\nabla$ is $-i\omega$. Sections of $L$ are identified with functions on $T^*X$, where the open dense subset of regular points is $(T^*X)_\mathrm{reg}\cong X \times   \aa_+^*\times B.$ The Hilbert space of half-form corrected $\mathcal{P}_0$-polarized functions is then 
$$
\mathcal{H}_{\mathcal{P}_0}=\left\{ f\otimes \sqrt{dx}\,\, \vert \,\, f\in L^2(X,dx)\right\} \cong L^2(X,dx). 
$$
The Hilbert space of $\mathcal{P}_F$-polarized functions is described by Lisiecki as recalled in Theorem \ref{lisieckithm},
\begin{align*}
\mathcal{H}_{\mathcal{P}_F}&=\left\{ f e^{iS} \otimes \sqrt{d\lambda db}\,\,\vert \,\, f\in L^2\left(\aa_+^*\times B,\frac{d\lambda db}{\vert c(\lambda) \vert^2}\right)\right\} \\&\cong L^2\left(\aa_+^*\times B,\frac{d\lambda db}{\vert c(\lambda) \vert^2}\right).  
\end{align*}

Recall that the polarizations $\mathcal{P}_t, t>0,$ are obtained from the vertical polarization $\mathcal{P}_0$ by push-forward\footnote{We have $\mathcal{P}_t = (\varphi_{-t}^{X_h})_* \,  \Pm_0$ in terms of distributions on $T(T^*X)$ which corresponds to the fact that $\mathcal{P}_t$-polarized functions are obtained from $\mathcal{P}_0$-polarized functions by pull-back under $\varphi_t^{X_h}$, as described in Section \ref{sectionevolutionofpol}. } under the one-parameter group of diffeomorphisms $\varphi_t^{X_h}$ in (\ref{hamflow}), giving the Hamiltonian flow of $X_h$, where $h$ in (\ref{hamil}) is given by 
\begin{equation}
\label{e-ham_31}
h ([g,\xi])    = \frac{|\mu([g,\xi])|^2}{2} = \frac{|\xi|^2}{2} .
\end{equation}

{}

The Hilbert space of $\mathcal{P}_t$-polarized half-form corrected functions is given by 
\begin{equation}\label{hilbertpt}
\mathcal{H}_{\mathcal{P}_t} = \overline{\left\{ f\otimes \sqrt{d\eta_t}\,\, \vert \,\,f\in C^\infty (X\times \aa_+^*\times B),\,\, \nabla_{\mathcal{P}_t}f=0,\,\,  \int_{X_t} \vert f\vert^2 {d\eta_t} <\infty\right\}},
\end{equation}
where the bar denotes norm completion and where $X_t= \varphi_t^{X_h} (X) \subset T^*X$ is the space of leaves of $\mathcal{P}_t.$ 
(Here, the space of leaves of $\mathcal{P}_0$ is $X\hookrightarrow T^*X$, where $X$ is identified with the zero section of $T^*X$.)

The evolution of polarized sections of the prequantum bundle along families of polarizations generated by Hamiltonian flow is described as follows. Consider the (half-form corrected) Kostant-Souriau prequantum operator for $h$, ${\rm prQ}(h)$, and the operator $Q(h)$ which we call the quantum operator for $h$:
\begin{eqnarray}
\label{e-2ops}
\nonumber {\rm prQ}(h) &:=& \left(-i \nabla_{X_h}  + h\right) \otimes 1 + 1 \otimes (-i L_{X_h})\\
&=&
\nonumber\left(-i X_h - h\right) \otimes 1 + 1 \otimes (-i L_{X_h})  \\
Q(h) &:=&  \frac{1}{2} \left( - \Delta - |\rho|^2  \right) \otimes 1,  \,  
\end{eqnarray}
where $L_{X_h}$ denotes the Lie derivative along $X_h$.

\begin{lemma}\label{lemmaprequantum}
    The operator
    $$
    e^{it {\rm prQ}(h)} : \mathcal{H}_{\mathcal{P}_0}\to \mathcal{H}_{\mathcal{P}_t}
    $$
is a unitary isomorphism of Hilbert spaces.
\end{lemma}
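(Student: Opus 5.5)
The plan is to identify $e^{it\,{\rm prQ}(h)}$ with the natural lift of the flow $\varphi^{X_h}_t$ to the prequantum bundle tensored with the bundle of half-forms. Once that identification is made, the three required properties follow from three facts: the lift preserves the prequantum connection, it carries half-forms to half-forms, and it transports integrals.

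\textbf{Step 1: the operator is a lifted pull-back.} Write ${\rm prQ}(h)=-i\bigl(X_h-ih\bigr)\otimes 1+1\otimes(-iL_{X_h})$. On $L=T^*X\times\CC$ with $\nabla=d-i\theta$, the first factor of $e^{it{\rm prQ}(h)}$ acts by
\[
f\mapsto e^{t(X_h-ih)}f .
\]
Solving the transport equation along integral curves gives
\[
\bigl(e^{t(X_h-ih)}f\bigr)(p)=e^{-i\int_0^t h(\varphi_s(p))\,ds}\,f(\varphi_t(p)).
\]
Since $h$ is invariant under its own flow, the phase is simply $e^{-ith(p)}$. The second factor acts on half-forms by $\sqrt{\nu}\mapsto\sqrt{(\varphi_t^{X_h})^*\nu}$. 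I would fix signs and conventions so that the combined operator reads
\[
f\otimes\sqrt{dx}\;\mapsto\; e^{\mp it h}\,(f\circ\varphi^{X_h}_t)\otimes\sqrt{d\eta_t},
\]
using $d\eta_t=(\varphi^{X_h}_t)^*\pi^*dx$ from \eqref{detat}. The convention must match the one giving $\mathcal P_t=(\varphi^{X_h}_{-t})_*\mathcal P_0$.

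\textbf{Step 2: the image is $\mathcal P_t$-polarized.} Here the key identity is
\[
L_{X_h}\theta=d\iota_{X_h}\theta+\iota_{X_h}\omega=d(\theta(X_h)-h),
\]
so $(\varphi^{X_h}_t)^*\theta-\theta$ is exact, with potential $\int_0^t(\theta(X_h)-h)\circ\varphi_s\,ds$. For the geodesic Hamiltonian, which is quadratic in the fibres, $\theta(X_h)=2h$, so the potential equals $th$. This is exactly the phase found in Step 1. Consequently the lifted map intertwines $\nabla$ with $(\varphi^{X_h}_t)^*\nabla$, i.e. it is a connection-preserving automorphism of $L$ covering $\varphi^{X_h}_t$. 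Now take a vertically polarized $f=\pi^*f_0$, so $\nabla_{\mathcal P_0}f=0$. Its image then satisfies $\nabla_{\mathcal P_t}(\cdot)=0$, because $\mathcal P_t$ is the pull-back of $\mathcal P_0$ by $\varphi_t^{X_h}$ (as in Proposition \ref{timeevolution}, up to the phase factor). Likewise $\sqrt{d\eta_t}$ is the canonical half-form for $\mathcal P_t$.

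\textbf{Step 3: unitarity and bijectivity.} The norm on $\mathcal H_{\mathcal P_t}$ is $\int_{X_t}|f|^2d\eta_t$. Since $|e^{-ith}|=1$ and $\varphi^{X_h}_t$ maps $X$ onto the leaf space $X_t$ with $d\eta_t=(\varphi_t^{X_h})^*\pi^*dx$, change of variables gives
\[
\int_{X_t}|f\circ\varphi_t|^2\,d\eta_t=\int_X|f|^2\,dx .
\]
So the map is isometric on the dense subspace of smooth polarized sections. The inverse is $e^{-it{\rm prQ}(h)}$, constructed the same way from $\varphi_{-t}$, and it maps smooth $\mathcal P_t$-polarized square-integrable sections back to $\mathcal P_0$-polarized ones. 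Hence the map is a bijective isometry between dense subspaces, and it extends to a unitary isomorphism of the completions.

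The main obstacle I expect is Step 2: checking carefully that the phase $e^{\mp ith}$ exactly compensates the change of the Liouville form, including signs, the factor $\theta(X_h)=2h$, and the $-h$ term in ${\rm prQ}(h)$. I would verify this directly in the model $[g,\xi]\mapsto[ge^{t\tilde\xi},\xi]$, where $\theta$ pulled back to $G\times\ss^*$ is $\langle\xi,\Theta\rangle$. Lemma \ref{lemma245} then gives $\Phi_t^*\langle\xi,\Theta\rangle=\langle\xi,\Theta\rangle+t\,\langle\xi,d\tilde\xi\rangle=\theta+t\,d(|\xi|^2/2)$, confirming the exact term $d(th)$.
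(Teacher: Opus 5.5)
Your proposal is correct and follows the same route as the paper: identify $e^{it\,{\rm prQ}(h)}$ with $f\otimes\sqrt{dx}\mapsto e^{-ith}\,(\varphi_t^{X_h})^*f\otimes\sqrt{d\eta_t}$, and obtain unitarity from the change of variables under $\varphi_t^{X_h}$. The paper cites Woodhouse for the $\mathcal{P}_t$-polarization of the image, whereas you verify it directly via $(\varphi_t^{X_h})^*\theta=\theta+t\,dh$; you also make surjectivity explicit through the inverse built from $\varphi_{-t}^{X_h}$, which the paper does not spell out.
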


{}

\begin{proof}
This is a general feature holding for quantizations along families of real polarizations obtained by push-forward under Hamiltonian flow.   
Let $\psi = f\otimes \sqrt{dx}\in \mathcal{H}_{\mathcal{P}_0}$. Then, it is well-known that (see Chapter 9 in \cite{woodhouse:1991})
$$
e^{it {\rm prQ}(h)} \cdot \psi
$$
is $\mathcal{P}_t$-polarized, which is straightforward to verify explicitly.
Since the geodesic flow is analytic in $t$ (see (\ref{e_15})), on real-analytic functions on $T^*X$ the operator 
$e^{tX_h}$ acts simply as the pull-back $(\varphi_t^{X_h})^*.$  For $f\in C(X)$ we have therefore (if the reader prefers, this can be taken as a definition)
\begin{align*}
    e^{it {\rm prQ}(h)} \cdot \left(f\otimes \sqrt{dx}\right)&= e^{-ith} e^{tX_h}\otimes e^{t L_{X_h}} \cdot \left(f\otimes \sqrt{dx}\right)\\ 
    &= e^{-ith} \left((\varphi_t^{X_h})^* f \right)\otimes \sqrt{d\eta_t}.
\end{align*}

Notice now that 
$$
\vert\vert  e^{it {\rm prQ}(h)} \cdot \left(f\otimes \sqrt{dx}\right)\vert\vert_{\mathcal{H}_{\mathcal{P}_t}} = \vert\vert  
e^{-ith} \left((\varphi_t^{X_h})^* f \right) \otimes \sqrt{d\eta_t}\vert\vert_{\mathcal{H}_{\mathcal{P}_t}}
$$
where the Hermitian structure in $\mathcal{H}_{\mathcal{P}_t}$, in (\ref{hilbertpt}), is given by the integral of a form of top-degree on $X_t$ which is obtained from the form of top-degree on $X$ giving the usual $L^2$-norm of $f\otimes \sqrt{dx}$ in $L^2(X,dx)$ by pulling-back under the diffeomorphism $\varphi_t^{X_h}$. It follows, from the invariance of the integral under orientation-preserving diffeomorphisms,  that the operator 
$$e^{it {\rm prQ}(h)}: \mathcal{H}_{\mathcal{P}_0} \to \mathcal{H}_{\mathcal{P}_t}$$ is a unitary isomorphism.  
\end{proof}

\begin{lemma}\label{lemmaquantumop}
The operator 
$$
e^{-itQ(h)}: \mathcal{H}_{\mathcal{P}_0} \to \mathcal{H}_{\mathcal{P}_0}
$$
is a unitary isomorphism.
\end{lemma}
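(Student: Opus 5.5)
The plan is to show that $Q(h)$ is an essentially self-adjoint operator on $L^2(X,dx)$ and then invoke Stone's theorem. Since $Q(h)$ acts only on the first tensor factor, it suffices to treat $\frac12(-\Delta-|\rho|^2)$ on $L^2(X,dx)$; the half-form factor $\sqrt{dx}$ is a fixed trivializing element, and $\mathcal{H}_{\mathcal{P}_0}\cong L^2(X,dx)$ isometrically.

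First, recall that $X=G/K$ with its $G$-invariant metric is a complete Riemannian manifold, since it is homogeneous. For complete manifolds the Laplace--Beltrami operator $\Delta$ is essentially self-adjoint on $C_c^\infty(X)$ (Gaffney/Chernoff), and $-\Delta\geq 0$. Adding the real constant $-|\rho|^2$ and multiplying by $\tfrac12$ preserves self-adjointness. Hence $Q(h)$, defined as the closure, is self-adjoint. Stone's theorem then gives that $e^{-itQ(h)}$, defined by the functional calculus, is a strongly continuous one-parameter group of unitaries. In particular it is a unitary isomorphism of $\mathcal{H}_{\mathcal{P}_0}$ onto itself, with inverse $e^{itQ(h)}$.

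Alternatively, and more in the spirit of the paper, I would make this explicit through Theorem \ref{th_FT}. By (\ref{e_eig}) and Definition \ref{def_FT}, the FH transform $\mathcal{F}$ conjugates $\frac12(-\Delta-|\rho|^2)$ to multiplication by $\frac12|\lambda|^2$ on $L^2(\aa_+^*\times B, d\lambda db/|c(\lambda)|^2)$. One checks this on $C_c^\infty(X)$ by moving $\Delta$ onto the kernel $e^{(-i\lambda+\rho)(A(x,b))}$, which is legitimate since $\Delta$ is formally self-adjoint and this kernel is also an eigenfunction with eigenvalue $-(|\lambda|^2+|\rho|^2)$. We then set $e^{-itQ(h)}=\mathcal{F}^{-1}\circ M_{e^{-it|\lambda|^2/2}}\circ\mathcal{F}$. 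Multiplication by a unimodular function is unitary, and $\mathcal{F}$ is unitary, so the composite is unitary. This description will also be the one used later in the paper.

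The only delicate point is identifying the abstract operator with the multiplication picture, i.e.\ establishing self-adjointness (equivalently, that the closure from $C_c^\infty$ has no other extensions). I would cite completeness of $X$ for this rather than reprove it. No other obstacle is expected.
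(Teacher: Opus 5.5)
Your argument is correct and is essentially the paper's proof: completeness of $X$ gives essential self-adjointness of the Laplace--Beltrami operator (the paper cites Strichartz, Theorem 2.4, where you cite Gaffney/Chernoff), and Stone's theorem then makes $e^{-itQ(h)}$ a one-parameter group of unitaries. Your second description, $\mathcal{F}^{-1}\circ M_{e^{-it|\lambda|^2/2}}\circ\mathcal{F}$, is consistent with how the paper later writes $U_t$, but the lemma does not need it.
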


\begin{proof}
    We have that (for a $G$-invariant Riemannian structure) $X$ is a complete Riemannian space. It follows from Theorem 2.4 in \cite{Strichartz83} that the Laplace-Beltrami operator on $L^2(X,dx)$ is essentially 
    self-adjoint admitting, therefore, a unique self-adjoint extension. The lemma then follows from Stone's 
    theorem on one-parameter groups of unitary isomorphisms on Hilbert spaces (see, for instance, Chapter 10 in \cite{Hall:2013}). 
\end{proof}

The collection of Hilbert spaces $\left\{ \mathcal{H}_{\mathcal{P}_t}\right\}_{t\geq 0}$ forms a bundle of Hilbert spaces over the half-line $[0,+\infty)$,
$$ \mathcal{H} \to [0,+\infty).$$
We call it the quantum bundle.

{}
Lifting the geodesic flow to the quantum bundle means finding natural maps between the Hilbert spaces $\mathcal{H}_{\mathcal{P}_{t_1}}$ and $\mathcal{H}_{\mathcal{P}_{t_2}}$, for different values $t_1\neq t_2$, thus giving a natural identification between the quantizations of $T^*X$ in the polarizations $\mathcal{P}_{t_1}$ and $\mathcal{P}_{t_2}.$ 
To lift the geodesic flow to the quantum bundle, in analogy with several examples which involve Hamiltonian flows in complex time (see, for instance, \cite{baier.hilgert.kaya.mourao.nunes:2023, baier.ferreira.hilgert.mourao.nunes:2024}) we now define the quantum geodesic transform. This is an analog, for real-time Hamiltonian evolution, of the Segal-Bargmann-Hall and generalized coherent state transforms. 
Note that in the description above, we may as well take $t\in \mathbb{R}$ and the quantum bundle
$$
\mathcal{H} \to \mathbb{R}.
$$

{}

\begin{definition}
\label{defQG}
The quantum geodesic transform is the lifting of the geodesic flow on $T^*X$
to the Hilbert bundle given by the 
following family of operators
\[(U_t : \HH_{\Pm_0} \to \HH_{\Pm_t})_{t\in\mathbb{R}},\] where
\begin{align}
   \label{e_QGT}
U_t &:= e^{i t {\rm prQ}(h)} \circ e^{-it Q(h)}\nonumber\\
&= \left( \left(\varphi^{X_h}_t\right)^* \otimes
e^{t{L}_{{X_h}}}\right) \, \circ \, \left( e^{-ith} \otimes 1 \right)\circ \left(
e^{\frac{it}{2} {\left(\Delta + \vert\rho\vert^2 \right)}}\otimes 1\right). 
\end{align}  
\end{definition}

\begin{remark}
    The convention of signs in the exponents in (\ref{e_QGT}) differs from the one in \cite{baier.hilgert.kaya.mourao.nunes:2023, baier.ferreira.hilgert.mourao.nunes:2024} and corresponds also to the different choice of sign for the $\nabla_{X_h}$ term in the definition of the prequantum operator in (\ref{e-2ops}). This is the convention that agrees with the usual conventions for the Fourier-Helgason transform, as shown in Section \ref{toinftyandbeyond}.
\end{remark}

\begin{proposition}
    $U_t: \mathcal{H}_{\mathcal{P}_0}\to \mathcal{H}_{\mathcal{P}_t}$ is, for every $t \in \RR$, a $G$-equivariant unitary isomorphism. 
\end{proposition}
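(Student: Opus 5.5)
Unitarity follows from Definition~\ref{defQG}, since $U_t$ is the composition of two maps already shown to be unitary. By Lemma~\ref{lemmaquantumop}, $e^{-itQ(h)}$ is a unitary automorphism of $\HH_{\Pm_0}$. By Lemma~\ref{lemmaprequantum}, $e^{it\,{\rm prQ}(h)}$ is a unitary isomorphism $\HH_{\Pm_0}\to\HH_{\Pm_t}$. Hence $U_t=e^{it\,{\rm prQ}(h)}\circ e^{-itQ(h)}$ is a unitary isomorphism $\HH_{\Pm_0}\to\HH_{\Pm_t}$ for every $t\in\RR$. For $t<0$, Lemma~\ref{lemmaprequantum} as stated covers only $t\ge 0$. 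Its proof, however, uses only that $\varphi^{X_h}_t$ is an orientation-preserving diffeomorphism with $(\varphi^{X_h}_t)^*\pi^*dx=d\eta_t$, and this holds for all real $t$, since $\varphi^{X_h}_t$ lies in a one-parameter group. So the same argument applies verbatim.

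For $G$-equivariance I first fix the actions. On $\HH_{\Pm_0}\cong L^2(X,dx)$, $G$ acts by the left regular representation $(g\cdot f)(x)=f(g^{-1}x)$. On $\HH_{\Pm_t}$, $G$ acts by pulling back functions and half-forms under the symplectic action $\ell_{g^{-1}}$ of $G$ on $T^*X$. This action preserves $\Pm_t$, as the next step shows. I then check that each factor of $U_t$ intertwines these actions.

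\emph{(a) The factor $e^{-itQ(h)}$.} The Laplace--Beltrami operator $\Delta$ of a $G$-invariant metric commutes with left translations on $C_c^\infty(X)$. By uniqueness of the self-adjoint extension (Lemma~\ref{lemmaquantumop}), the closure of $\Delta$ commutes with the unitary left regular representation. Functional calculus then gives that $e^{\frac{it}{2}(\Delta+|\rho|^2)}$ commutes with it too.

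\emph{(b) The factor $e^{-ith}$.} The function $h=|\mu|^2/2$ is $G$-invariant, because $\mu$ is $G$-equivariant by (\ref{e-moment-symm-space}) and the pseudo-inner product is $\Ad$-invariant. So multiplication by $e^{-ith}$ commutes with the action.

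\emph{(c) The factor $(\varphi^{X_h}_t)^*\otimes e^{tL_{X_h}}$.} The flow (\ref{hamflow}) is $[g,\xi]\mapsto[ge^{t\tilde\xi},\xi]$, which commutes with the left action $[g,\xi]\mapsto[g'g,\xi]$. Therefore $(\varphi^{X_h}_t)^*$ commutes with $\ell_{g'}^*$, and the flow maps $\Pm_0$ to $\Pm_t$ compatibly with the action. For the half-form part, $d\eta_t=(\varphi^{X_h}_t)^*\pi^*dx$ is $G$-invariant because $dx$ is and $\pi$ is equivariant. Hence $\sqrt{d\eta_t}$ is $G$-invariant and is carried by the pull-back from $\sqrt{dx}$.

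It remains to check that the prequantum data are invariant, so that $G$ genuinely acts on polarized sections with the trivial lift. The Liouville form $\theta$ is invariant under cotangent lifts, so $\nabla=d-i\theta$ is $G$-invariant. Composing (a), (b) and (c) gives $U_t(g\cdot\psi)=g\cdot U_t(\psi)$. I expect the main (mild) obstacle to be step (a): one has to pass from commutation on smooth compactly supported functions to commutation of the unitary group, and essential self-adjointness from \cite{Strichartz83} is exactly what allows this.
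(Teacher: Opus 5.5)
Your proof is correct and follows the same route as the paper. Unitarity holds because $U_t$ is a composition of the unitary maps from Lemmas~\ref{lemmaprequantum} and \ref{lemmaquantumop}, and $G$-equivariance follows from the $G$-invariance of $\omega$, $h$, $X_h$, $L_{X_h}$ and $\Delta$. You supply details the paper leaves implicit: that the flow commutes with the left action, that $\theta$ and $d\eta_t$ are invariant, that commutation passes to $e^{\frac{it}{2}(\Delta+|\rho|^2)}$ via essential self-adjointness, and that the argument extends to $t<0$.
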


\begin{proof}
The symplectic structure, the Hamiltonian $h$, $X_h$, the operator $L_{X_h}$ and $\Delta$ are $G$-invariant. The unitarity of $U_t$ follows since, from Lemmas \ref{lemmaprequantum}, \ref{lemmaquantumop}, it is a composition of unitary operators.
\end{proof}

Note that the operators $U_t, t\in \mathbb{R},$ define a one-parameter family of $G$-equivariant parallel transport operators on the quantum bundle $\mathcal{H}$ given by 
$$
\mathcal{U}_{t_1,t_2}:= U_{t_2} \circ U_{t_1}^{-1}: \mathcal{H}_{\mathcal{P}_{t_1}} \to \mathcal{H}_{\mathcal{P}_{t_2}},
$$
so that 
$$
\mathcal{U}_{t_2,t_3} \circ \mathcal{U}_{t_1,t_2} = \mathcal{U}_{t_1,t_3}.  
$$
{}
These parallel transport operators define, therefore, a $G$-invariant unitary (flat) connection on $\mathcal{H}\to \mathbb{R}.$

Using the inverse Fourier transform (\ref{e_IFT}), from (\ref{e_QGT}), we obtain that the quantum geodesic transform can be written as
\begin{eqnarray*}
    && U_t\left(f \, \otimes \sqrt{dx} \right)(\eta_t) = \\ 
    && \int_{\mathfrak{a}_+^*\times B } \, 
\left(\varphi^{X_h}_t\right)^* \left( E_{\lambda, b} \right)([g,\lambda_H]) 
\, e^{-\frac{it}{2} \left(|\lambda|^2 + |\lambda_H|^2\right)} \, \widetilde f(\lambda, b) \, \frac{d\lambda db}{|c(\lambda)|^2}  \, \otimes \, \sqrt{d\eta_t} \, ,
\end{eqnarray*}
where   $d\eta_t$ is defined in (\ref{detat}),  $dx$ is the $G$-invariant measure on $X$ {and $\sqrt{dx}, \sqrt{d\eta_t}$ denote the half-forms associated to $dx, d\eta_t$, respectively.} Also, $\lambda_H\in \aa^*_+$ is defined by $\langle \lambda, \lambda_H\rangle = \lambda (H), H\in \aa_+,$ where $\langle\cdot,\cdot\rangle$ denotes the inner product on $\aa^*$ induced from the one on $\aa.$

\subsection{The limit $t\to +\infty$ and the Fourier-Helgason transform}
\label{toinftyandbeyond}

In order to use Theorem \ref{th_HeThm} and Corollary \ref{cor2} we consider the following

\begin{definition}\label{defregularizedU}
    For $\epsilon\in \aa^*_\CC$ such that 
    $\mathrm{Re}\, (i\langle \epsilon, \alpha\rangle)>0$ for $\alpha\in \Sigma^+$, define the regularized quantum geodesic transform
\begin{eqnarray}
\label{ee-ut2}
&& U_t^\epsilon \, : \, \HH_{\Pm_0}  \longrightarrow \HH_{\Pm_t} \nonumber \\
\nonumber && U_t^\epsilon\left(f \, \otimes \sqrt{dx} \right)(\eta_t)  := \\    
\nonumber & & \int_{\mathfrak{a}_+^*\times B } \, 
\left(\varphi^{X_h}_t\right)^* \left( E_{\lambda+\epsilon, b} \right)([g,\lambda_H]) 
\, e^{-it\epsilon(H)} e^{-\frac{it}{2} \left(|\lambda|^2 + |\lambda_H|^2\right)} \, \widetilde f(\lambda, b) \, \frac{d\lambda db }{|c(\lambda)|^2}  \, \otimes \, \sqrt{d\eta_t} .
\end{eqnarray}
\end{definition}

{}

\begin{definition}\label{defU}
Let $\mathrm{U}$ be the following map 
\begin{eqnarray}
\label{e33}
\mathrm{U} \, : \, \HH_{\Pm_0} & \longrightarrow & \HH_{\Pm_{F}}   \\  
\mathrm{U} (f \otimes \sqrt{dx})(x,\lambda, b)  &:=&       \frac{{c(\lambda)}}{|{c(\lambda)}|} \, \mathcal{F}(f)(\lambda , b) \,  e^{i\lambda(A(x,b))} \, \otimes \sqrt{\frac{d\lambda \, db}{|c(\lambda)|^2}}\,   \, . \nonumber
 \end{eqnarray}
\end{definition}
Note that, as recalled in Theorem \ref{lisieckithm}, from Theorem 4.2 and Section 6 (Paragraph 6.3) of \cite{Lisiecki87} it follows that the  phase function, $e^{i \lambda(A(x,b))}$,
in the right-hand side of (\ref{e33}), occurs as a factor in the expression for 
Fourier-polarized functions so that the right-hand side indeed belongs to $\HH_{\Pm_{F}}$. 
Note also that $\mathrm{U}$ is unitarily equivalent to the Fourier transform in the sense that they differ by a phase function.

Let $\mathcal{S}(X)$ and $\mathcal{S}(\aa_+^*\times B)$ denote the space of Schwartz functions on $X$ and on $\aa_+^*\times B$ as defined in \cite{EguchiOkamoto77}. {(We say that $\phi\in \mathcal{S}(\aa_+^*)$ if $\phi$ extends to a $W$-invariant $\tilde\phi\in\mathcal{S}(\aa^*)$.)} Recall that the Fourier-Helgason transform is a topological isomorphism between these  spaces of rapidly decaying functions which is also an isometry with respect to the $L^2$ metric (see Theorem 5 in \cite{EguchiOkamoto77}). 
Our main result in this section is the following: 

\begin{theorem}\label{ThQG}
    One has, 
    $$
    \lim_{\epsilon\to 0} \lim_{t\to \infty} U_t^\epsilon = (-1)^{r/2} e^{i\frac{\pi r}{4}} (-2)^{\vert\Sigma^+\vert/2}\sqrt{c_0 c_1^{-1}}\, \mathrm{U} ,
    $$
    in the sense that, 
    for $f\otimes \sqrt{dx}\in \HH_{\Pm_0}$, with $f\in \mathcal{S} (X)$,
    $$
    \lim_{\epsilon\to 0} \lim_{t\to \infty} U_t^\epsilon (f\otimes \sqrt{dx})= (-1)^{r/2} e^{i\frac{\pi r}{4}} (-2)^{\vert\Sigma^+\vert/2}\sqrt{c_0 c_1^{-1}} \,\mathrm{U} (f\otimes \sqrt{dx}),
    $$
    where the limit is taken in the compact-open topology in $C^\infty (X\times  \aa_+^*\times B)\otimes C^\infty(\mathfrak{L})$ where 
    $\mathfrak{L}$ is the (trivial) square-root of the (trivializable) bundle of $G$-invariant $(\dim X)$-forms on $(T^*X)_\mathrm{reg}\cong G/M \times \aa_+^*.$
\end{theorem}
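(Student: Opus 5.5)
The plan is to evaluate $U_t^\epsilon(f\otimes\sqrt{dx})$ at a point $\eta=[g,\lambda_H]\in(T^*X)_\mathrm{reg}$. Under $\varphi$ this point corresponds to $(x,\lambda_H,b)$ with $x=g\cdot K$ and $b=k_1(g)\cdot M$. The transform then splits into a scalar part and a half-form part, and I treat the two separately.

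\emph{Scalar part.} Since $(\varphi_t^{X_h})^*E_{\lambda+\epsilon,b'}([g,\lambda_H])=e^{(i(\lambda+\epsilon)+\rho)(A(ge^{tH}K,b'))}$, the scalar factor of $U_t^\epsilon$ equals
\[
\int_{\aa_+^*}e^{-it\epsilon(H)}e^{-\frac{it}{2}(|\lambda|^2+|\lambda_H|^2)}\Big(\int_B e^{(i(\lambda+\epsilon)+\rho)(A(ge^{tH}K,b'))}\widetilde f(\lambda,b')\,db'\Big)\frac{d\lambda}{|c(\lambda)|^2}.
\]
For the inner integral I apply Corollary~\ref{cor2} with $\lambda+\epsilon$ in place of $\lambda$. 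The hypothesis on $\epsilon$ together with $\lambda\in\aa_+^*$ guarantees the required condition. This gives, as $t\to\infty$,
\[
\int_B\cdots\,db'\sim e^{(i(\lambda+\epsilon)-\rho)(tH)}\,c(\lambda+\epsilon)\,\widetilde f(\lambda,g\cdot b_o)\,e^{(i(\lambda+\epsilon)-\rho)(H(g))}.
\]
Here $g\cdot b_o=b$ and $H(g)=A(x,b)$ by \eqref{defAxb}. The factor $e^{it\epsilon(H)}$ cancels against $e^{-it\epsilon(H)}$, which is exactly why $\epsilon$ was introduced. What remains in the $\lambda$-integral is the oscillatory phase
\[
e^{it\lambda(H)-\frac{it}{2}(|\lambda|^2+|\lambda_H|^2)}=e^{-\frac{it}{2}|\lambda-\lambda_H|^2},
\]
multiplied by $e^{-t\rho(H)}e^{-\rho(A(x,b))}$. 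I then apply stationary phase, i.e.\ the Fresnel integral on $\aa^*\cong\RR^r$ with respect to the measure $d\lambda=(2\pi)^{-r/2}d\mathrm{vol}$:
\[
\int e^{-\frac{it}{2}|\lambda-\lambda_H|^2}\phi(\lambda)\,d\lambda\sim t^{-r/2}e^{-i\pi r/4}\phi(\lambda_H).
\]
Since $\lambda_H$ lies in the open chamber, the stationary point is interior, and Schwartz decay of $\widetilde f$ (Eguchi--Okamoto) controls the tails. The scalar part is therefore asymptotic to
\[
t^{-r/2}e^{-i\pi r/4}\,e^{-t\rho(H)}e^{-\rho(A(x,b))}\,\frac{c(\lambda_H+\epsilon)}{|c(\lambda_H)|^2}\,\widetilde f(\lambda_H,b)\,e^{i(\lambda_H+\epsilon)(A(x,b))}.
\]

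\emph{Half-form part.} By Proposition~\ref{PP1},
\[
\sqrt{d\eta_t}\sim t^{r/2}e^{\rho(A(x,b))+t\rho(H)}\sqrt{c_0c_1^{-1}(-2)^{|\Sigma^+|}}\,\sqrt{db\,d\lambda}.
\]
Multiplying the two parts, the powers of $t$, the factors $e^{\pm t\rho(H)}$ and the factors $e^{\pm\rho(A)}$ cancel exactly. Writing $c(\lambda)/|c(\lambda)|^2=(c/|c|)\cdot|c|^{-1}$ and absorbing $|c|^{-1}$ into $\sqrt{d\lambda\,db/|c|^2}$, the limit $t\to\infty$ gives $\epsilon$-dependent terms $c(\lambda+\epsilon)$ and $e^{i\epsilon(A)}$. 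Letting $\epsilon\to0$ and using continuity of $c$ on $\aa_+^*$, these become $c(\lambda)$ and $1$, recovering $\mathrm{U}$ of Definition~\ref{defU}. The constant is $e^{-i\pi r/4}(-2)^{|\Sigma^+|/2}\sqrt{c_0c_1^{-1}}$, which should match $(-1)^{r/2}e^{i\pi r/4}$ after fixing the branch of the square root of the half-form and the sign conventions for $t$.

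\emph{Main obstacle.} The hard step is justifying the interchange of the limit $t\to\infty$ with the $\lambda$-integral and doing so uniformly on compacta. Corollary~\ref{cor2} is pointwise in $\lambda$, so I need a bound on
\[
e^{(-i(\lambda+\epsilon)+\rho)(tH)}\int_B\cdots\,db'
\]
that is uniform for $\lambda$ in compact sets and for $(g,H)$ in compact sets, with at most polynomial growth in $\lambda$. I would obtain it from Helgason's proof via the $\bar N$-integral representation of the Poisson kernel, using $\mathrm{Re}(i\epsilon)$ for convergence. I would then combine this with a uniform version of stationary phase: split $\phi=\phi(\lambda_H)+(\phi-\phi(\lambda_H))$, integrate the remainder by parts, and use the Schwartz decay of $\widetilde f$. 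The compact-open statement then follows because every estimate is locally uniform in $(x,\lambda_H,b)$.
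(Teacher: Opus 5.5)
Your proposal follows essentially the same route as the paper's proof: Corollary~\ref{cor2} applied to the $B$-integral with $\lambda+\epsilon$, cancellation of $e^{\pm it\epsilon(H)}$ and completion of the square to $e^{-\frac{it}{2}|\lambda-\lambda_H|^2}$, domination via Helgason's $\bar N$-representation, stationary phase in $\lambda$ (the paper's Lemma~\ref{L2}), Proposition~\ref{PP1} for $\sqrt{d\eta_t}$, and $\epsilon\to0$ by continuity of $c$. Your Fresnel constant $e^{-i\pi r/4}$ is correct and agrees with the paper's $(-1)^{r/2}e^{i\pi r/4}$ once $(-1)^{r/2}$ is read as $e^{-i\pi r/2}$.

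One caution concerns the step you flag as the main obstacle, which the paper's Lemma~\ref{L2} also passes over quickly. Because of the amplifying factor $t^{r/2}$, a $t$-uniform bound on the $B$-integral together with pointwise convergence is not enough. Your integration by parts therefore also needs $t$-uniform bounds on finitely many $\lambda$-derivatives of that integral. You can get these by differentiating under the $\bar N$-integral and absorbing the resulting polynomial factors into the $\epsilon$-shifted exponential.
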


{}

For the proof of Theorem \ref{ThQG} we will need  several lemmas.

{}
We have the following variation of Lemma 3.5 in Chapter II of \cite{helgason:2008}.
\begin{lemma}\label{TheHelgasonvariations}
Let $\epsilon\in \aa_\CC^*$ be such that $\mathrm{Re}\, (i\langle \epsilon, \alpha\rangle)>0,$ for $\alpha\in \Sigma^+$. Both functions
$$
\lambda \mapsto c(\lambda+\epsilon) \,\,\, \mathrm{and} \,\,\,
\lambda \mapsto c(\lambda+\epsilon)^{-1},\,\,\, \lambda\in \aa_+^*,
$$
have slow growth, that is, each of their derivatives is bounded by a polynomial.
\end{lemma}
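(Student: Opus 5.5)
The plan is to use the Gindikin--Karpelevich product formula for the Harish-Chandra $c$--function, as in the proof of Lemma 3.5 in Chapter II of \cite{helgason:2008}:
\[
c(\lambda)=c_0\prod_{\alpha\in\Sigma_0^+} c_\alpha(\langle i\lambda,\alpha_0\rangle),
\qquad
c_\alpha(z)=\frac{2^{-z}\,\Gamma(z)}{\Gamma\!\left(\tfrac12\left(\tfrac12 m_\alpha+1+z\right)\right)\Gamma\!\left(\tfrac12\left(\tfrac12 m_\alpha+m_{2\alpha}+z\right)\right)} .
\]
Here $\Sigma_0^+$ denotes the indivisible positive roots, $\alpha_0=\alpha/\langle\alpha,\alpha\rangle$, and $m_\alpha$ are the multiplicities. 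This formula reduces both claims to one-variable estimates for quotients of Gamma functions, evaluated along the line $z=\langle i(\lambda+\epsilon),\alpha_0\rangle$. Writing $\delta_\alpha:=\mathrm{Re}\,\langle i\epsilon,\alpha_0\rangle$, which is positive by hypothesis, the real part of this argument is exactly $\delta_\alpha$ for all real $\lambda$. The imaginary part equals $\langle\lambda,\alpha_0\rangle+\mathrm{const}$ and ranges over a shifted copy of $\RR$.

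The first step is to show that each factor $z\mapsto c_\alpha(z)^{\pm1}$ is holomorphic and zero-free in a vertical strip $\{\delta_\alpha/2<\mathrm{Re}\,z<2\delta_\alpha\}$ around the line $\mathrm{Re}\,z=\delta_\alpha$. The numerator $\Gamma(z)$ has no poles there since $\mathrm{Re}\,z>0$, and $\Gamma$ never vanishes. The denominator Gammas have arguments with real part at least $\tfrac14 m_\alpha>0$, so they too are pole-free and zero-free. Thus $c_\alpha$ and $1/c_\alpha$ are holomorphic on the strip. This is exactly where the regularization by $\epsilon$ is needed: at $\epsilon=0$ the factor $1/c(\lambda)$ is not smooth at the walls.

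The second step, which is the main point, is polynomial growth on the strip. By Stirling's formula, uniformly in $\mathrm{Re}\,z$ on compacts,
\[
\left|\frac{\Gamma(z)}{\Gamma(\tfrac12 z+a)\,\Gamma(\tfrac12 z+b)}\right| \sim C\,|\mathrm{Im}\,z|^{\mathrm{Re}(z)-2\mathrm{Re}(a+b)+1}\,2^{\mathrm{Re}\,z}
\]
as $|\mathrm{Im}\,z|\to\infty$. The exponential factors $e^{-\pi|\mathrm{Im}z|/2}$ cancel between numerator and denominator. The factor $2^{-z}$ has modulus $2^{-\mathrm{Re}z}$, which is bounded on the strip. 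Hence $|c_\alpha^{\pm1}(z)|\le C(1+|z|)^N$ on the strip, and away from large $|\mathrm{Im}\,z|$ the bound follows from compactness and continuity.

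The third step converts these bounds into bounds on derivatives. Applying Cauchy's estimates on discs of fixed radius $\delta_\alpha/4$ centered on the line $\mathrm{Re}\,z=\delta_\alpha$, all $z$-derivatives of $c_\alpha^{\pm1}$ are polynomially bounded on that line. The map $\lambda\mapsto z$ is affine, so the chain rule transfers these bounds to $\lambda$-derivatives. The Leibniz rule applied to the finite product over $\Sigma_0^+$ then gives that every derivative of $c(\lambda+\epsilon)^{\pm1}$ is bounded by a polynomial in $\lambda\in\aa_+^*$, and in fact on all of $\aa^*$.

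The only delicate point is ensuring that Stirling's asymptotics hold uniformly on the strip and that no poles are crossed. Both are guaranteed because all Gamma arguments keep their real parts bounded below by a positive constant.
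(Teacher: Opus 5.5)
Your proposal is correct and takes essentially the same route as the paper. The paper likewise uses the Gindikin--Karpelevich product to reduce to one-variable Gamma quotients whose arguments have real part bounded below by $\mathrm{Re}\langle i\epsilon,\alpha_0\rangle>0$, and then defers to Helgason's proof of Lemma 3.5 in Chapter II. Your Cauchy-estimate step just makes explicit the passage from growth bounds to derivative bounds.

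There are two harmless slips. First, the Stirling exponent is $\tfrac12-\mathrm{Re}(a+b)=-\tfrac12(m_\alpha+m_{2\alpha})$, not what you wrote; any real exponent suffices, so the conclusion is unaffected. Second, at $\epsilon=0$ it is $c(\lambda)$, not $c(\lambda)^{-1}$, that is singular on the walls, because of the pole of $\Gamma(z)$ at $z=0$; $c(\lambda)^{-1}$ is smooth there since $1/\Gamma$ is entire.
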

\begin{proof}
    We follow the proof of Lemma 3.5 in Chapter II of \cite{helgason:2008}. Helgason's 
    variable $z$ now becomes $z= \langle i\lambda + i\epsilon, \alpha_0\rangle$ for $\alpha_0$ a positive root. Since $\mathrm{Re}\,\langle i\epsilon,\alpha_0\rangle >0$ we also obtain that 
    both  $c(\lambda+\epsilon)$ and $c(\lambda+\epsilon)^{-1}$ are obtained as products of one-variable functions of the form
    $$
    \frac{\Gamma(a+ix)}{\Gamma(b+ix)}, \,\, x\in \RR,
    $$
    for constant $a,b>0$. (See also the proof of Proposition 7.2 in Chapter IV of \cite{helgason:1984}.) The proof then follows exactly as in \cite{helgason:2008}.
\end{proof}

\begin{lemma}
\label{L2} 
Let $\mathcal{S}(\aa_+^*)$ be the Schwartz space of rapidly decreasing functions on $\aa_+^*.$
Let $\left\{g_t\right\}\subset \mathcal{S}(\aa_+^*)$ be a sequence such that $\lim_{t\to\infty}g_t=g\in \mathcal{S}(\aa_+^*)$ and such that $\vert g_t \vert<\phi \in \mathcal{S}(\aa_+^*).$ 

Then, for $0\neq \lambda_H\in \aa_+^*$,
\begin{equation*}
\lim_{t \to +\infty} t^{r/2} \, \int_{\aa_+^*}\,  e^{-\frac{it}{2} |\lambda  - \lambda_H|^2} g_t(\lambda) \frac{c(\lambda+\epsilon)}{\vert c(\lambda)\vert^2} d\lambda
=  (-1)^{r/2}\, e^{i\frac{\pi r}{4}} g(\lambda_H)  \frac{c(\lambda_H+\epsilon)}{\vert c(\lambda_H)\vert^2} ,
\end{equation*}
where  $\epsilon\in \aa^*_\CC$ is such that 
    $\mathrm{Re}\, (i\langle \epsilon, \alpha\rangle)>0,$ for $\alpha\in \Sigma^+$.
\end{lemma}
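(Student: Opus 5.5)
This is a stationary phase computation for a quadratic phase. I would write $\Psi(\lambda):=g_t(\lambda)\,c(\lambda+\epsilon)\,\vert c(\lambda)\vert^{-2}$ and view the integral as the action of the Schrödinger free propagator on $\Psi$. The point $\lambda_H$ is an interior point of $\aa_+^*$, so the phase $-\frac12\vert\lambda-\lambda_H\vert^2$ has a single nondegenerate critical point, with Hessian $-\mathrm{Id}$ with respect to the Killing metric. Formally this yields the factor $(2\pi/t)^{r/2}e^{-i\pi r/4}$ times $\Psi(\lambda_H)$. The normalization $d\lambda=(2\pi)^{-r/2}d\mathrm{vol}_{\aa^*}$ cancels the $(2\pi)^{r/2}$. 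The stated constant $(-1)^{r/2}e^{i\pi r/4}=e^{i\pi r/2}e^{i\pi r/4}$ should come from $e^{-i\pi r/4}$ once the branch convention for $(-i)^{-r/2}$ is fixed. I would check this convention carefully against the one-dimensional Fresnel integral $\int_\RR e^{-\frac{it}{2}x^2}dx=\sqrt{2\pi/t}\,e^{-i\pi/4}$ and adopt the paper's branch.

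The rigorous steps are as follows.

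Step 1: the weight $w(\lambda):=c(\lambda+\epsilon)\vert c(\lambda)\vert^{-2}$ is smooth of slow growth on $\aa_+^*$. For $c(\lambda+\epsilon)$ this is Lemma~\ref{TheHelgasonvariations}. For $\vert c(\lambda)\vert^{-2}=(c(\lambda)c(-\lambda))^{-1}$ it is Helgason's Lemma II.3.5 in \cite{helgason:2008}; this quantity is in fact a polynomial or a smooth function of polynomial growth on $\aa^*$. It follows that $g_tw$ and $gw$ are Schwartz-dominated.

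Step 2: substitute $\lambda=\lambda_H+t^{-1/2}u$. The left side becomes $(2\pi)^{-r/2}\int e^{-\frac i2\vert u\vert^2}\Psi_t(\lambda_H+t^{-1/2}u)\,\mathbf 1_{\aa_+^*}\,du$. Since $e^{-\frac i2\vert u\vert^2}$ is not absolutely integrable, dominated convergence does not apply directly. I would therefore insert the Gaussian damping $e^{-\delta\vert u\vert^2}$ and use Parseval: the Fourier transform of $e^{-\frac{i}{2}\vert u\vert^2}$ is again a unimodular Gaussian, so the rescaled integral equals
\[
c_r\int e^{\frac{i}{2t}\vert\zeta\vert^2}\,\widehat{\Psi_t\mathbf 1_{\aa_+^*}}(\zeta)\,e^{-i\langle\zeta,\lambda_H\rangle}\,d\zeta .
\]
As $t\to\infty$ this tends to $c_r\int\widehat{\Psi\mathbf 1}(\zeta)e^{-i\langle \zeta,\lambda_H\rangle}d\zeta=c_r'\Psi(\lambda_H)$, by dominated convergence on the Fourier side, provided $\widehat{\Psi_t\mathbf 1}$ is dominated by a fixed $L^1$ function.

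Step 3, the cutoff. To handle the boundary of $\aa_+^*$, I would split $\Psi_t=\chi\Psi_t+(1-\chi)\Psi_t$, with $\chi$ a smooth bump supported in $\aa_+^*$ and equal to $1$ near $\lambda_H$.

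For the part away from $\lambda_H$, the phase has no critical point on the support. I would integrate by parts with the operator $L=\frac{i}{t}\vert\lambda-\lambda_H\vert^{-2}(\lambda-\lambda_H)\cdot\nabla$. Each application gains $t^{-1}$, so the contribution is $O(t^{-N})$. This requires derivative bounds on $g_t$ (or a $W$-invariant extension to $\aa^*$). Failing those, I would use $W$-invariance of $\vert c\vert^{-2}$ to extend the integral to all of $\aa^*$. I expect this to be the main obstacle: the hypotheses give only pointwise convergence and a pointwise Schwartz bound on $g_t$, with no derivative control. If the integration by parts cannot be justified, I would fall back on Step 2 applied to $(1-\chi)\Psi_t$, whose rescaled version vanishes pointwise.

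For the local part $\chi\Psi_t$, the Fourier-side domination needs smoothness uniform in $t$. In the intended application $g_t$ comes from the smooth, $t$-analytic integrand of the QGT, so I would verify the needed $C^{r+1}$ bounds there and invoke them here. The limit then gives $g(\lambda_H)w(\lambda_H)$ with the constant fixed in the first paragraph.
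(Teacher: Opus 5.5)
You have put your finger on the real issue, and it is not a technicality. Read with pointwise convergence, which is all that the proof of Theorem~\ref{ThQG} establishes for its $g_t$, the statement is false. Take $g=0$, $\psi\in C_c^\infty(\aa_+^*)$ with $\psi(\lambda_H)\neq0$, and $\theta\in C_c^\infty(\aa^*)$ with $\theta(0)=0$ and $\int\theta\neq0$. Put $g_t(\lambda)=e^{\frac{it}{2}|\lambda-\lambda_H|^2}\psi(\lambda)\theta\bigl(\sqrt t(\lambda-\lambda_H)\bigr)$. Then $g_t\in C_c^\infty(\aa_+^*)$, $g_t\to0$ pointwise and $|g_t|\le\|\theta\|_\infty|\psi|$, so the hypotheses hold with $\phi=Ce^{-|\lambda|^2}$. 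Yet with $w(\lambda)=c(\lambda+\epsilon)|c(\lambda)|^{-2}$ one gets
\[
t^{r/2}\int_{\aa_+^*}e^{-\frac{it}{2}|\lambda-\lambda_H|^2}g_t\,w\,d\lambda=(2\pi)^{-r/2}\int_{\aa^*}(\psi w)(\lambda_H+t^{-1/2}u)\,\theta(u)\,du ,
\]
which tends to $(2\pi)^{-r/2}\psi(\lambda_H)w(\lambda_H)\int_{\aa^*}\theta\neq0$ instead of $0$.

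The paper's proof fails at exactly this point. It splits $\aa_+^*$ into compact pieces $K_j$, applies a stationary phase lemma from Varadarajan to the fixed $g$, and finally uses $\bigl|\int_{K_j}e^{-\frac{it}{2}|\lambda-\lambda_H|^2}(g_t-g)w\,d\lambda\bigr|<\epsilon_j\mathrm{Vol}(K_j)$. That last estimate is true but carries no factor $t^{r/2}$. If $\lim g_t=g$ were meant in the topology of $\mathcal{S}(\aa_+^*)$ the lemma would hold, but that is not what is verified where it is used. So the lemma needs an extra hypothesis, e.g.\ $t$-uniform Schwartz bounds on $\partial^\beta g_t$ for $|\beta|\le N(r)$, to be checked in the application, as you propose.

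With such a hypothesis your route is correct and genuinely different from the paper's. Parseval gives the $t$-uniform bound $t^{r/2}\bigl|\int e^{-\frac{it}{2}|\lambda-\lambda_H|^2}u\,d\lambda\bigr|\le C\|\widehat u\|_{L^1}$. This both produces the constant and kills $\chi(g_t-g)w$, since uniform $C^N$ bounds plus pointwise convergence give $C^{N-1}$ convergence on $\operatorname{supp}\chi$ by Arzel\`a--Ascoli. Integration by parts on $\operatorname{supp}(1-\chi)$ then supplies the $t$-uniform control that the paper's interchange of $\lim_t$ and $\sum_j$ ``by dominated convergence'' silently requires.

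Three corrections are needed.
\begin{enumerate}
\item Your fallback for the far region does not work. The rescaled integrand vanishes pointwise, but its $L^1$ norm grows like $t^{r/2}$. The example above with $\theta\bigl(\sqrt t(\lambda-\lambda_1)\bigr)$ in place of $\theta\bigl(\sqrt t(\lambda-\lambda_H)\bigr)$, where $\lambda_1\neq\lambda_H$ lies outside $\operatorname{supp}\chi$ and $\psi(\lambda_1)\neq0$, gives a nonzero limit coming entirely from that region. The derivative bounds are therefore indispensable there too.
\item Integrating by parts on the cone $\aa_+^*$ creates terms on the walls. A $W$-invariant extension cannot remove them, because neither the phase nor $c(\lambda+\epsilon)$ is $W$-invariant. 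They are harmless: each is $t^{-1}$ times an $(r-1)$-dimensional oscillatory integral whose phase has a nondegenerate critical point, hence $O(t^{-(r+1)/2})=o(t^{-r/2})$. Your claimed $O(t^{-N})$ is wrong, however.
\item The limit constant is $e^{-i\pi r/4}$. This equals the paper's $(-1)^{r/2}e^{i\pi r/4}$ only with $(-1)^{1/2}=-i$; your reading $(-1)^{r/2}=e^{i\pi r/2}$ is off by $(-1)^r$.
\end{enumerate}
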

{}

\begin{proof} 
From Lemma 3.5 in Chapter II in \cite{helgason:2008}, $c(\lambda)^{-1}$ is of slow growth, so that it and each of its derivatives are bounded by  polynomials, and it  therefore defines a tempered distribution. From Lemma \ref{TheHelgasonvariations} the same holds for $c(\lambda+\epsilon)$, so that the function 
$$
\frac{c(\lambda+\epsilon)}{c(\lambda)}
$$ has slow growth.

It follows that, for each $t$, the integral exists since $g_t$ is Schwartz. Therefore, we can take a decomposition of $\aa_+^*$ as a union of almost disjoint compact subsets $\left\{K_j\subset \aa_+^*\right\}_{j\in \mathbb{N}_0}$ such that $\lambda_H\in \mathrm{Interior}(K_0)$ and 
$$
\lim_{t\to+\infty} t^{r/2}
\int_{\aa_+^*}\,  e^{-\frac{it}{2} |\lambda  - \lambda_H|^2} g_t(\lambda) \frac{c(\lambda+\epsilon)}{\vert c(\lambda)\vert^2} d\lambda = 
$$
$$
=
\lim_{t\to +\infty} t^{r/2}
\sum_{j=0}^\infty \int_{K_j}\,  e^{-\frac{it}{2} |\lambda  -\lambda_H|^2} g_t(\lambda) \frac{c(\lambda+\epsilon)}{\vert c(\lambda)\vert^2} 
d\lambda =
$$

$$
\sum_{j=0}^\infty \left(
\lim_{t\to +\infty} t^{r/2}
 \int_{K_j}\,  e^{-\frac{it}{2} |\lambda  - \lambda_H|^2} g_t(\lambda) \frac{c(\lambda+\epsilon)}{\vert c(\lambda)\vert^2} d\lambda \right),
$$
since under the hypothesis of the lemma we can apply the theorem of dominated convergence.
Suppose now that we had the fixed $g$ instead of $g_t$ in the integrands for the integrals over the compact sets $K_j$.
From Lemma 2.8 in \cite{varadarajan97}, the contribution of the integral over $K_j$ would vanish if $j\neq 0$ 
while the integral over $K_0$ would give the desired result. (Note that in \cite{varadarajan97} the Lebesgue measure is used while 
$d\lambda$ differs from it by a factor of $(2\pi)^{-r/2}$.) Since, for any $\epsilon_j>0$, for sufficiently large $t$ we will have that 
$$
\left\vert\int_{K_j}\,  e^{-\frac{it}{2} |\lambda  - \lambda_H|^2} \left( g_t(\lambda)-g(\lambda\right)) \frac{c(\lambda+\epsilon)}{\vert c(\lambda)\vert^2} 
d\lambda \right\vert < \epsilon_j \mathrm{Vol}(K_j),
$$
the lemma follows.
\end{proof}

{}

\begin{proof}(of Theorem \ref{ThQG})
    From Definition \ref{defregularizedU}, equations (\ref{e_15}) and (\ref{e_eigTB})  we have 
    \begin{eqnarray}
\nonumber && U_t^\epsilon\left(f \, \otimes \sqrt{dx} \right)(\eta_t)  = \\
\nonumber &=& \, \int_{\mathfrak{a}_+^* \times  B} \, e^{-it\epsilon(H)}
e^{(i (\lambda+\epsilon) + \rho)(A(x_t,\tilde b))}  \, e^{-\frac{it}{2} (|\lambda|^2  + |\lambda_H|^2)} \, \widetilde f(\lambda, \tilde b) \, \frac{d\lambda d\tilde b}{|c(\lambda)|^2}  \, \otimes \, \sqrt{ d\eta_t} \\
\nonumber &=&    \, \int_{\mathfrak{a}_+^* \times  B} \, e^{(-i(\lambda+\epsilon) + \rho)(tH)} \,
e^{(i (\lambda+\epsilon) + \rho)(A(x_t,\tilde b))}  \,  e^{-\frac{it}{2} (|\lambda|^2  + |\lambda_H|^2 -2 \lambda (H))} \, \widetilde f(\lambda, \tilde b) \, \frac{d\lambda d\tilde b}{|c(\lambda)|^2}  \, \\ \nonumber
&& \qquad t^{\frac{r}{2}} \, e^{\rho(A(x,b))}\otimes \, \left( e^{-\left(\rho(A(x,b))+t\rho(H)\right)}t^{-\frac{r}{2}}\right)  \sqrt{d\eta_t} 
\end{eqnarray}

Proposition \ref{PP1} then gives  
$$
\lim_{\epsilon\to 0}\lim_{t\to \infty} U_t^\epsilon\left(f \, \otimes \sqrt{dx} \right)(\eta_t)  = 
$$
$$
\lim_{\epsilon\to 0}\lim_{t\to \infty} t^{\frac{r}{2}} \, e^{\rho(A(x,b))}\int_{\mathfrak{a}_+^* \times  B} \, e^{(-i(\lambda+\epsilon) + \rho)(tH)} \,
e^{(i (\lambda+\epsilon) + \rho)(A(x_t,\tilde b))}  \,  e^{-\frac{it}{2} |\lambda -\lambda_H|^2}
$$
$$
 \, \widetilde f(\lambda, \tilde b) \, \frac{d\lambda d\tilde b}{|c(\lambda)|^2}  \otimes (-2)^{\vert\Sigma^+\vert/2}\sqrt{c_0 c_{1}^{-1}} \sqrt{d\lambda_H \,  db}.
$$

If $f\in S(X)$ then $\tilde f \in \mathcal{S}(\aa_+^* \times B)$ \cite{EguchiOkamoto77}.
The integral over $B$ produces a smooth rapidly decreasing function of $\lambda$, for each $t$, that is
$$
g_t(\lambda):=\int_B  e^{(-i(\lambda+\epsilon) + \rho)(tH)} \, e^{(i (\lambda+\epsilon) + \rho)(A(x_t,\tilde b))} \, \widetilde f(\lambda, \tilde b)   d\tilde b
\in \mathcal{S}(\aa_+^*).
$$
Moreover, for fixed $\lambda$, the integral over $B$ can be transformed to an integral over $\bar N$ such that one can apply the theorem of dominated convergence as in the proof of Theorem 3.16 in Chapter II of \cite{helgason:2008}, so that the integrand over $\bar N$ is dominated by 
$$
e^{(\delta \epsilon - \rho) H(\bar n)} \xi (\lambda),
$$
for some choice of $0< \delta <1$,
where $\xi(\lambda):= \sup_{{\lambda}\times B} \vert \tilde f \vert$ and where $\tilde f$ decreases rapidly along $\aa_+^*$.
It follows that $\vert g_t\vert < \phi$ for a $t$-independent $\phi\in \mathcal{S}(\aa_+^*)$ and Corollary \ref{cor2} gives that 
$$
\lim_{t\to +\infty} g_t (\lambda)= c(\lambda+\epsilon) e^{(i(\lambda+\epsilon) -\rho)(A(x,b))}  \tilde f (\lambda, b).
$$

We can then apply Lemma \ref{L2} and take the limit $t\to\infty$ inside the integral over $\aa_+^*$. By Corollary \ref{cor2},  
this gives
\begin{eqnarray*}
& &  \lim_{t\to \infty} U_t^\epsilon\left(f \, \otimes \sqrt{dx} \right)   =  \\ 
 & = & (-1)^{r/2} e^{i\frac{\pi r}{4}}\widetilde f(\lambda_H, b) e^{i(\lambda_H+\epsilon)(A(x,b))} \frac{c(\lambda_H + \epsilon)}{|c(\lambda_H)|^2} \otimes (-2)^{\vert\Sigma^+\vert/2}\sqrt{c_0 c_1^{-1}} \sqrt{d\lambda_H d b}.
\end{eqnarray*}

Since the function $\lambda\mapsto c(\lambda)$ is continuous on the open Weyl chamber $\aa_+^*$ (see Theorem 6.14 in Chapter IV of \cite{helgason:1984}), 
the theorem follows by taking $\epsilon\to 0$,
\begin{eqnarray*}
&& \lim_{\epsilon \to 0}\lim_{t\to\infty} U_t^\epsilon\left(f \, \otimes \sqrt{dx} \right)(\eta_t) = \\ &=&
(-1)^{r/2} e^{i\frac{\pi r}{4}} \widetilde f(\lambda_H, b) e^{i\lambda_H(A(x,b))}\frac{1}{\bar c(\lambda_H)} \otimes (-2)^{\vert\Sigma^+\vert/2} \sqrt{c_0 c_1^{-1}} \sqrt{d\lambda_H d b}.
\end{eqnarray*}
 That is, we obtain,
\begin{eqnarray*}
 && \lim_{\epsilon \to 0}\lim_{t\to \infty} U_t^\epsilon\left(f \, \otimes \sqrt{dx} \right) (x,\lambda,b) = \\ & = &  (-1)^{r/2} e^{i\frac{\pi r}{4}}(-2)^{\vert\Sigma^+\vert/2}\sqrt{c_0 c_1^{-1}} \frac{{c(\lambda)}}{|{c(\lambda)}|} \, \mathcal{F}(f)(\lambda , b)  e^{i\lambda(A(x,b))} \, \otimes \sqrt{\frac{d\lambda \,  db}{|c(\lambda)|^2}},
\end{eqnarray*}
where $\eta_t$ was defined in (\ref{e_15}), we have set $\lambda_H=\lambda$ and $(x,\lambda, b)$ are related by (\ref{thebigdiffeo}).
{}
\end{proof}

\begin{remark}
    The image of $\mathrm{U}$, which is the Fourier transform up to a constant, is the Hilbert space $\mathcal{H}_{\mathcal{P}_F}$ of half-form corrected polarized sections for the Fourier polarization $\mathcal{P}_F$. 
    Note that the functions in the image of $\mathrm{U}$ have precisely the form given in Theorem \ref{lisieckithm} and, as recalled in Section \ref{subsfourier},  from Section 6, paragraph 6.3 in \cite{Lisiecki87}, there is an isomorphism 
$$
\mathcal{H}_{\mathcal{P}_F} \cong L^2\left(\aa^*_+\times B, \frac{d\lambda db}{\vert c(\lambda)\vert^2}\right).
$$

\end{remark}

\appendix

\section{Volume forms and normalizing constants}\label{appvolumeforms}
The purpose of this appendix is to calculate the constants $c_0$ and $c_1$ showing up in Lemmas~\ref{newlemma23} and \ref{lemmadb} as a consequence of the uniqueness of invariant measures up to scalars. As we mentioned in Section~\ref{Notation}, we follow the normalizations of invariant measures chosen in \cite[\S~II.3.1]{helgason:2008}. Our calculations are of a differential geometric nature, so we need to see what these normalizations mean in terms of volume forms\footnote{In the following, we ignore signs, i.e. orders of orthonormal bases; it is enough to assume an ordering consistent with the various constants $c_0, c_1, c_G$, etc being all positive.}. This will allow us to keep track of the constants in the arguments providing integral formulas via the transformation formula for Lebesgue measures.

In order to describe suitable volume forms for $G$ and the subgroups $K$, $A$, $N$, $\bar{N}$, $M$, the homogeneous spaces $G/K, K/M$, and the subspaces $\aa,\ss$ of $\gg$, we introduce the inner product $B_\sigma(v,w)=-\kappa(v,\sigma w)$, where $\kappa$ denotes the Killing form of $\gg$. It defines a left invariant Riemannian metric on $G$.

\begin{remark}[Volume forms of Riemannian manifolds]\label{rem:def-volume-form} 
   Given a Riemannian manifold $\mathcal{M}$ the tangent bundle $T\mathcal{M}$ and the cotangent bundle $T^*\mathcal{M}$ are isomorphic Euclidean vector bundles (the real counterpart of Hermitian vector bundles). Given an orthonormal local frame for $T\mathcal{M}$ the dual local frame, consisting of $1$-forms, is also orthonormal.  The exterior product of these $1$-forms is the local volume form on $\mathcal{M}$. All these local volume forms are independent of the choice of orthonormal frames and agree on overlaps. Thus they define a global form $d\mathrm{vol}_\mathcal{M}$ on $\mathcal{M}$ which is called the volume form of the Riemannian manifold $\mathcal{M}$. (For standard references, see Chapter VI of \cite{boothby:1975} or Chapter I of \cite{BottandTu:1982}.)
\end{remark}

\begin{remark}[Volume forms on Lie groups]\label{rem:volume forms on Lie groups} 
   Let $G$ be a Lie group and $B:\gg\times \gg\to \RR$ a Euclidean inner product. By left translation this defines a Riemannian metric on $G$.  Consider the linear isomorphism
   \[\gg^*\to\gg,\quad \lambda\mapsto \lambda^\flat\]
   defined by
   \[\forall Y\in \gg:\quad \lambda(Y)=B(\lambda^\flat,Y).\]
   Then the dual metric $B^*:\gg^*\times \gg^*\to \RR$  for $B$ is defined by 
   \[B^*(\lambda_1,\lambda_2)=\lambda_1(\lambda_2^\flat)=B(\lambda_1^\flat,\lambda_2^\flat).\]
   Given a $B$-orthonormal basis $(Y_j)_j$ for $\gg$, the dual basis $(\lambda_j)_j$ for $\gg^*$ is $B^*$-orthonor-mal. Further,
   left translating the $Y_j$ yields a left $G$-invariant orthonormal frame for $TG$, which we still denote by $(Y_j)_j$. The dual orthonormal frame $(\eta_j)_j$ then consists of the left $G$-invariant $1$-forms (cf. Remark~\ref{rem:Maurer-Cartan-form}) for which $(\eta_j(e))_j$ is the dual basis for $(Y_j)_j$. Thus the volume form on $G$ with respect to $B$ is given by
   \[d\mathrm{vol}_G=\bigwedge_j \eta_j.\]
\end{remark}

\begin{remark}[Decomposition of volume forms]\label{rem:volume-forms decomposition}
\begin{enumerate}
\item[]
    \item[(i)] Volume forms on closed subgroups $H$ of $G$ are obtained as exterior products of dual bases of orthonormal bases for $\mathfrak{h}$. Given orthonormal bases for $B_\sigma$-orthogonal subalgebras $\mathfrak{h}_1,\ldots,\mathfrak{h}_r$, the union of the $r$ bases is an orthonormal basis for $\mathfrak{h}_1\oplus\ldots\oplus\mathfrak{h}_r$. Now suppose that $\mathfrak{h}_1\oplus\ldots\oplus\mathfrak{h}_r=\gg$. Then this decomposition of $\gg$ induces isometric embeddings $\mathfrak{h}_\ell^*\hookrightarrow\gg^*$, where the inner products on the $\mathfrak{h}_\ell^*$  and $\gg^*$ are obtained from the respective restrictions of $B_\sigma$ via duality. Let $(Y_j^\ell)_{j=1,\ldots,\dim\mathfrak{h}_\ell}$ be an orthonormal basis for $\mathfrak{h}_\ell$ and $(\eta_j^\ell)_{j=1,\ldots,\dim\mathfrak{h}_\ell}$ its dual basis for $\mathfrak{h}_\ell^*$. Then $(\eta_j^\ell)_{j=1,\ldots,\dim\mathfrak{h}_\ell}$ is orthonormal.  If we assume that each $\mathfrak{h}_\ell$ is the Lie algebra of a closed subgroup $H_\ell$ of $G$ the volume form for $H_\ell$ is given by 
\[d\mathrm{vol}_{H_\ell}=\bigwedge_{j=1}^{\dim\mathfrak{h}_\ell}  \eta_j^{\ell,H_\ell}, \]
where $\eta_j^{\ell,H_\ell}$ is the $H_\ell$-invariant $1$-form on $H_\ell$ with $\eta_j^{\ell,H_\ell}(e)=\eta_j^\ell\in \mathfrak{h}_\ell^*$. This volume form extends to a $G$-invariant $\dim\mathfrak{h}_\ell$-form 
\[d\mathrm{vol}_{H_\ell}^G=\bigwedge_{j=1}^{\dim\mathfrak{h}_\ell}  \eta_j^{\ell,G}, \]
on $G$, where $\eta_j^{\ell,G}$ is the $G$-invariant $1$-form on $G$ with $\eta_j^{\ell,G}(e)=\eta_j^\ell\in \gg^*$ (using the isometric embedding $\mathfrak{h}_\ell^*\hookrightarrow\gg^*$). But then
\[d\mathrm{vol}_G=\bigwedge_{j_1=1}^{\dim\mathfrak{h}_1}  \eta_{j_1}^{\ell,G}\wedge\ldots\wedge \bigwedge_{j_r=1}^{\dim\mathfrak{h}_r}  \eta_{j_r}^{\ell,G}
=d\mathrm{vol}_{H_1}^G\wedge\ldots\wedge d\mathrm{vol}_{H_r}^G.\]
\item[(ii)]
Now suppose conversely that for each $H_\ell$ we have a left Haar measure $\mu_{H_\ell}$. This left Haar measure is given by a left $H_\ell$-invariant $\dim\mathfrak{h}_\ell$-form $d\mu_{H_\ell}$, which has to be a multiple of the volume form 
$d\mathrm{vol}_{H_\ell}$. We define the proportionality constant by 
\[c_{H_\ell}d\mathrm{vol}_{H_\ell}=d\mu_{H_\ell}.\]
By definition we have  $d\mu_{H_\ell}(e)\in \bigwedge^{\dim\mathfrak{h}_\ell} \mathfrak{h}_\ell^*$,
but the isometric embedding $\mathfrak{h}_\ell^*\hookrightarrow\gg^*$ induces embeddings on exterior products. Thus we may say that $d\mu^{H_\ell}(e)\in \bigwedge^{\dim\mathfrak{h}_\ell} \gg^*$, which in turn defines a left $G$-invariant $\dim\mathfrak{h}_\ell$-form $d\mu^G_{H_\ell}$ on $G$. But then
\[d\mu^G_{H_1}\wedge\ldots\wedge d\mu^G_{H_r}\]
is a left $G$-invariant $\dim\gg$-form on $G$, i.e. a multiple of the left $G$-invariant $\dim\gg$-form $d\mu_G$ on $G$ defining the Haar measure $\mu_G$ of $G$. Comparing the values of the various forms at $e$ we obtain 
\[d\mathrm{vol}_{H_\ell}^G(e)
=d\mathrm{vol}_{H_\ell}(e)
=\frac{1}{c_{H_\ell}}d\mu_{H_\ell}(e)
=\frac{1}{c_{H_\ell}}d\mu^G_{H_\ell}(e),\]
which implies 
\[c_{H_\ell}d\mathrm{vol}_{H_\ell}^G=d\mu^G_{H_\ell}\]
and therefore
\[\prod_{\ell=1}^r c_{H_\ell}d\mathrm{vol}_G =d\mu^G_{H_1}\wedge\ldots\wedge d\mu^G_{H_r}.
\]
\end{enumerate}
\end{remark}

It is known (see e.g. \cite[Chap. VI]{helgason:1984} or \cite[Chap. VI]{knapp:1996}) $\gg=\bar\nn\oplus \mm\oplus \aa\oplus\nn$ is an orthogonal decomposition with respect to this inner product and the decomposition of $\nn$ into root spaces for different restricted roots is orthogonal as well.  
Note that, on the other hand, the Iwasawa decomposition $\gg=\kk\oplus\aa\oplus \nn$ is only a direct vector space decomposition for which $\kk$ and $\nn$ are not orthogonal.

\begin{remark}[Bruhat decomposition of the Haar measure]\label{rem:c_G}
Recall  the formula 
\[\int_G f(g)\, dg=
\int_{\bar N\times M\times A\times N} f(\bar nman) e^{2\rho(\log a)} \, d\bar n\, dm\, da\, dn\]
from \cite[Prop.~I.5.21]{helgason:1984}. 
\begin{enumerate}
\item[(i)] In order to identify the left $G$-invariant form underlying this Haar measure we rewrite the formula as 
\[\int_G f(g)\, d\mu_G(g)=
\int_N\int_A\int_M\int_{\bar N} f(\bar nman) e^{2\rho(\log a)}  d\mu_{\bar N}(\bar n) d\mu_M(m) d\mu_A(a) d\mu_N(n),\]
where the right-hand side is an iterated integral of forms. Now we use projections $\mathrm{pr}_{\bar N}, \mathrm{pr}_M, \mathrm{pr}_A, \mathrm{pr}_N$ to pull back the forms $d\mu_{\bar N}, d\mu_M, d\mu_A, d\mu_N$ to forms $d\tilde\mu_{\bar N}, d\tilde\mu_M, d\tilde\mu_A, d\tilde\mu_N$ on $\bar N\times M\times A\times N$. The exterior product of these forms gives the product measure (see, for instance, Chapter I of \cite{BottandTu:1982}), so we have
\[\int_G f(g)\, d\mu_G(g)=
\int_{\bar N M A N} f(g) e^{2\rho(\log \mathrm{pr}_A(g))}  d\tilde\mu_{\bar N}(g)\wedge d\tilde\mu_M(g)\wedge d\tilde\mu_A(g)\wedge d\tilde\mu_N(g).\]
Thus the left $G$-invariant $\dim \gg$-form $d\mu_G$ on $G$ satisfies
\[d\mu_G(e)=d\tilde\mu_{\bar N}(e)\wedge d\tilde\mu_M(e)\wedge d\tilde\mu_A(e)\wedge d\tilde\mu_N(e).\]  
As the single forms are pull-backs by the projections, the values at $e$ are the forms  $d\mu_{\bar N}(e), d\mu_M(e), d\mu_A(e), d\mu_N(e)$ on the respective subalgebras $\bar\nn,\mm,\aa,\nn$ extended by zero on the complementary factors. But since $\gg$ is the orthogonal sum of these subalgebras, we can use the embeddings $\bar\nn^*,\mm^*,\aa^*,\nn^*\hookrightarrow\gg^*$ and obtain that  
\begin{align*}
   &d\tilde\mu_{\bar N}(e) =d\mu_{\bar N}(e) =c_{\bar N}\,d\mathrm{vol}_{\bar N}(e),\\
   &d\tilde\mu_M(e) =d\mu_M(e) =c_M\, d\mathrm{vol}_M(e),\\
   &d\tilde\mu_A(e) =d\mu_A(e) =c_{A}\,d\mathrm{vol}_A(e),\\
   &d\tilde \mu_N(e) =d\mu_N(e) =c_N\, d\mathrm{vol}_N(e).
\end{align*}
On the other hand the considerations in Remark~\ref{rem:volume-forms decomposition} show that 
\[d\mathrm{vol}_G(e)=d\mathrm{vol}_{\bar N}(e)\wedge d\mathrm{vol}_M(e) \wedge d\mathrm{vol}_A(e)\wedge d\mathrm{vol}_N(e).\]
Writing 
\[d\mu_G=c_G\, d\mathrm{vol}_G \]
we now find 
\begin{equation}\label{eq: c_G}
    c_G=c_{\bar N}c_Mc_Ac_N.
\end{equation}
Note that the Cartan involution $\sigma$ acts isometrically on $G$ and interchanges $N$ and $\bar N$. Moreover the normalization of $N$ is derived from the normalization of $d\bar n$  by applying $\sigma$. This implies $c_N=c_{\bar N}$.
\item[(ii)] 
The above integral formula implies that the form 
\[d\mu_G=e^{2\rho(\log \mathrm{pr}_A)}  d\tilde\mu_{\bar N}\wedge d\tilde\mu_M\wedge d\tilde\mu_A\wedge d\tilde\mu_N\]
is left $G$-invariant hence equal to the left $G$-invariant extension of 
\[d\tilde\mu_{\bar N}(e)\wedge d\tilde\mu_M(e)\wedge d\tilde\mu_A(e)\wedge d\tilde\mu_N(e)
=d\mu_{\bar N}(e)\wedge d\mu_M(e)\wedge d\mu_A(e)\wedge d\mu_N(e).\]
Thus we have
\[d\mu_G=d\mu^G_{\bar N}\wedge d\mu^G_M\wedge d\mu^G_A\wedge d\mu^G_N.\]
\end{enumerate}
\end{remark}

\begin{remark}[The volume form of $G$]\label{rem:volume form of G}
For a closer analysis of the relation between volume forms and Haar measures we recall the left $G$-invariant $1$-forms coming from the Maurer-Cartan form via \eqref{thetabruhat} and \eqref{thetacartan}. 
\begin{enumerate}
    \item [(i)]
Note that in view of the orthogonality of the Bruhat decomposition $\gg=\bar\nn\oplus \mm\oplus \aa\oplus\nn$  in Section~\ref{sec:Evolution} we may assume that $E_\alpha, A_j, M_i$  together with the $\sigma E_\alpha=E_{-\alpha}$  form an orthonormal basis for $\gg$. Then the $\frac{1}{\sqrt 2}E_\alpha^K = \frac{1}{\sqrt 2}(E_\alpha+E_{-\alpha})$ form an orthonormal basis for the orthogonal complement $\mm^{\perp_{B_\sigma}}\cap\kk$ of $\mm$ in $\kk$. Similarly, the $\frac{1}{\sqrt 2}E_\alpha^\ss = \frac{1}{\sqrt 2}(E_\alpha-E_{-\alpha})$ form an orthonormal basis for the orthogonal complement $\aa^{\perp_{B_\sigma}}\cap \ss$ of $\aa$ in $\ss$. Thus the $\frac{1}{\sqrt 2}E_\alpha^K, M_i, \frac{1}{\sqrt 2}E_\alpha^\ss, A_j$ form another orthonormal basis for $\gg$, which is adapted to the Cartan decomposition $\gg=\kk+\ss$. In view of Remarks~\ref{rem:Maurer-Cartan-form} and \ref{rem:volume forms on Lie groups} as well as \eqref{thetabruhat} and \eqref{thetacartan} this shows
\begin{align*}
d\mathrm{vol}_G
&=\bigwedge_{\alpha\in \Sigma^+} \eta_\alpha
\wedge\bigwedge_{\alpha\in\Sigma^+}\eta_{-\alpha}
\wedge\bigwedge_{j=1}^r\eta_j^A\wedge\bigwedge_{i=1}^m\eta_i^M \\
&= 
\bigwedge_{\alpha\in \Sigma^+} \sqrt2\,\eta_\alpha^K
\wedge\bigwedge_{i=1}^m\eta_i^M
\wedge\bigwedge_{\alpha\in \Sigma^+} \sqrt2\,\eta_\alpha^\ss 
\wedge\bigwedge_{j=1}^r\eta_j^A\\
&= 
2^{\dim\nn}\bigwedge_{\alpha\in \Sigma^+} \eta_\alpha^K
\wedge\bigwedge_{i=1}^m\eta_i^M
\wedge\bigwedge_{\alpha\in \Sigma^+} \eta_\alpha^\ss 
\wedge\bigwedge_{j=1}^r\eta_j^A.
\end{align*} 
In the notation of Remark~\ref{rem:volume-forms decomposition} we obtain
\[  d\mathrm{vol}^G_{\bar N}=\bigwedge_{\alpha\in \Sigma^+}\eta_{-\alpha},\quad
    d\mathrm{vol}^G_M=\bigwedge_{i=1}^m \eta_i^M,\quad
     d\mathrm{vol}^G_A=\bigwedge_{j=1}^r \eta_j^A,\quad
    d\mathrm{vol}^G_N=\bigwedge_{\alpha\in \Sigma^+}\eta_\alpha.
    \]
\item[(ii)] Note that under the assumptions made in (i), interpreting $\eta^A_j(e)\in \aa^*\subseteq \gg^*$ as a translation invariant form on $\aa$, we obtain \[d\mathrm{vol}_{\aa} = \bigwedge_{j=1}^r \eta_j^A(e).\] 
\item[(iii)] Using the orthogonal decomposition $\kk=\mm+(\mm^{\perp_{B_\sigma}}\cap\kk)$ we obtain 
\begin{equation}\label{eq:dvolK}
    d\mathrm{vol}_K = 2^{\frac{1}{2}\dim(\nn)}\bigwedge_{i=1}^m \eta_i^M|_K \wedge \bigwedge_{\alpha\in \Sigma^+}\eta_{\alpha}^K|_K
    =
    2^{\frac{1}{2}\dim(\nn)}\bigwedge_{i=1}^m \iota^*\eta_i^M \wedge \bigwedge_{\alpha\in \Sigma^+}\iota^*\eta_{\alpha}^K,
\end{equation} 
where we denote the inclusion of $K$ into $G$ by $\iota$.  We introduce the corresponding proportionality factor $c_K$ by 
\[d\mu_K=c_K\, d\mathrm{vol}_{K}.\]
\item[(iv)]
As $\mu_K=dk$ and $\mu_M=dm$ are normalized to be probability measures (see Subsection~\ref{Notation}) we get 
\[c_K^{-1}=\mathrm{vol}(K):=\int_K d\mathrm{vol}_K\] 
and 
\[c_M^{-1}=\mathrm{vol}(M):=\int_M d\mathrm{vol}_M\]
\item[(v)]
Recall that we cannot apply the arguments for the Bruhat decomposition to the Iwasawa decomposition as $\gg=\kk+\aa+\nn$ is not orthogonal, so there is no a priori reason why $c_G=c_K c_A c_N$ should hold, which in view of \eqref{eq: c_G}  would imply $c_K=c_M c_{\bar N}$.
\end{enumerate}
\end{remark}

\bigskip

\begin{remark}[Homogeneous spaces]
 \begin{enumerate}
 \item[]
     \item[(i)] 
   Note that the group theoretic way to construct invariant measures on the homogeneous spaces $G/K$ and $K/M$ is via the formulas 
\begin{equation}\label{eq:quotient measure GK}\int_G f(g)\, dg= \int_{G/K} \int_K f(gk)\, dk \, d\mu_{G/K}(gK)
\end{equation}
and
\begin{equation}\label{eq:quotient measure KM}\int_K f(k)\, dk= \int_{K/M} \int_M f(km)\, dm \, d\mu_{K/M}(kM), 
\end{equation}
see \cite[Thm.~I.1.9]{helgason:1984}. From this we get
\[\int_G f(g)\, d\mathrm{vol}_G(g)= \frac{c_K}{c_G}\int_{G/K} \int_K f(gk)\, d\mathrm{vol}_K(k) \, d\mu_{G/K}(gK)\]
and 
\[\int_K f(k)\, d\mathrm{vol}_K(k)= \frac{c_M}{c_K}\int_{K/M} \int_M f(km)\, d\mathrm{vol}_M(m) \, d\mu_{K/M}(kM).\]
\item[(ii)]
The fact that $K$ is compact with normalized Haar measure allows us to identify 
$d\mu_{G/K}$  as the push-forward of $dg$ with respect to the projection $p_X:G\to X=G/K$. In fact, applying \eqref{eq:quotient measure GK} to a sequence of right $K$-invariant smooth functions $f_n$ approximating the indicator function $I_{p_X^{-1}(E)}$ of a compact $K$-saturated set $p_X^{-1}(E)\subseteq G$ with $E\subseteq G/K$ compact yields
\[\mu_{G/K}(E)=\mu_G(p_X^{-1}(E))=   ((p_X)_*\mu_G)(E),\]
where as before $d\mu_G(g)=dg$. Similarly, $d\mu_{K/M}$  is the push-forward of $dk$ with respect to the projection $p_B\vert_K:K\to B=K/M$.
\item[(iii)]
The uniqueness statement in \cite[Thm.~I.1.9]{helgason:1984} says that there exist constants $c_{G/K}$ and $c_{K/M}$ such that 
\[d\mu_{G/K}=c_{G/K}\, d\mathrm{vol}_{G/K}\quad\text{and}\quad
d\mu_{K/M}=c_{K/M}\, d\mathrm{vol}_{K/M}\]
for the (invariant) volume forms of the homogeneous Riemannian manifolds $G/K$ and $K/M$.
\end{enumerate} 
\end{remark}

\begin{remark}[The volume forms of $G/K$ and $K/M$]\label{rem:volume forms X and B}
 \begin{enumerate}
 \item[]
     \item[(i)] As the $\frac{1}{\sqrt{2}}E_\alpha^\ss$ form an orthonormal basis for the orthogonal complement $\aa^{\perp_{B_\sigma}}\cap\ss$ of $\aa$ in $\ss$,  whose dual basis is formed by the $\sqrt{2}\,\eta_\alpha^\ss$, the volume form $d\mathrm{vol}_\ss$, viewed as a translation invariant $\dim\ss$-form on $\ss$, is given by 
     \[d\mathrm{vol}_\ss = 2^{\frac{1}{2}\dim(\nn)}
\bigwedge_{j=1}^r \eta_j^A(e) \wedge \bigwedge_{\alpha\in \Sigma^+}\eta_{\alpha}^\ss(e).\]
     \item[(ii)] The compact and connected group $K$ preserves $\ss^*$  and its metric. Therefore $d\mathrm{vol}_\ss$  is $K$-invariant. Identifying $T_{eK}(G/K)$ with $\ss$ we derive from it a $G$-invariant $\dim(G/K)$-form on $G/K$ which is nothing but the volume form $d\mathrm{vol}_{G/K}$. Thus we have
\begin{equation}\label{eq:dvolGK}d\mathrm{vol}_{G/K} = 2^{\frac{1}{2}\dim(\nn)}
\bigwedge_{j=1}^r \eta_j^A \wedge \bigwedge_{\alpha\in \Sigma^+}\eta_{\alpha}^\ss,
\end{equation}
where we identify right $K$-invariant forms on $G$ with forms on $G/K$.
\item[(iii)] By \eqref{eq:dvolK} the compact group $M$ preserves the subspace $(\mm^{\perp_{B_\sigma}}\cap \kk)^*$ of $\kk^*$ spanned by the $\eta_\alpha^K(e)$, so we have 
\[d\mathrm{vol}_{\mm^{\perp_{B_\sigma}}\cap \kk} = 2^{\frac{1}{2}\dim(\nn)}
\bigwedge_{\alpha\in \Sigma^+}\eta_{\alpha}^K(e).
\]
and 
\[d\mathrm{vol}_{K/M} = 2^{\frac{1}{2}\dim(\nn)}
\bigwedge_{\alpha\in \Sigma^+}\eta_{\alpha}^K|_K
=2^{\frac{1}{2}\dim(\nn)}
\bigwedge_{\alpha\in \Sigma^+}\iota^*\eta_{\alpha}^K,
\]
noting that $\bigwedge_{\alpha\in \Sigma^+}\eta_{\alpha}^K$ is right $M$-invariant.
 \end{enumerate}   
\end{remark}

\begin{remark}[The constants $c_{G/K}$ and $c_{K/M}$]\label{rem:volGK} 
\begin{enumerate}
\item[]
    \item[(i)]
In order to relate $d\mathrm{vol}_{G/K}$ and $d\mu_{G/K}$ we consider the left-invariant form 
\[\beta:= 2^{\frac{1}{2}\dim(\nn)}\bigwedge_{i=1}^m \eta_i^M \wedge \bigwedge_{\alpha\in \Sigma^+}\eta_{\alpha}^K\]
on $G$ and recall that \eqref{eq:dvolK}  implies $\iota^*(\beta)=d\mathrm{vol}_K$. Moreover, \eqref{eq:dvolGK} together with Remark~\ref{rem:volume form of G}(i) gives
\begin{align*}
    \beta\wedge p_X^* d\mathrm{vol}_{G/K}
&=2^{\dim(\nn)}\bigwedge_{i=1}^m \eta_i^M \wedge \bigwedge_{\alpha\in \Sigma^+}\eta_{\alpha}^K \wedge 
\bigwedge_{j=1}^r \eta_j^A \wedge \bigwedge_{\alpha\in \Sigma^+}\eta_{\alpha}^\ss\\
&=d\mathrm{vol}_G.
\end{align*}
Integration over the fibers of $p_X$ yields
\[\int_G f\,  d\mathrm{vol}_G 
= \int_G f \beta\wedge p_X^*\, d\mathrm{vol}_{G/K} 
= \int_{G/K} I(f\beta) \, d\mathrm{vol}_{G/K}, 
\]
where
\[(I(f\beta))(gK)
:=\int_{p_X^{-1}(gK)} f\beta 
= \int_{gK} f \beta = \int_{K} f(gk)\, d\mathrm{vol}_K(k).  \]
Together we obtain
\[\int_G f\,  d\mathrm{vol}_G = \int_{G/K}\int_{K} f(gk)\, d\mathrm{vol}_K(k)\, d\mathrm{vol}_{G/K}(gK), \]
which is the volume form analog of \eqref{eq:quotient measure GK}. Thus, we finally see that 
\[\frac{c_G}{c_K}\, d\mathrm{vol}_{G/K} = d\mu_{G/K} \quad\text{and}\quad c_{G/K}=\frac{c_G}{c_K}.\]
\item[(ii)]
Similarly, we find 
\[\frac{c_K}{c_M}\, d\mathrm{vol}_{K/M} = d\mu_{K/M} \quad\text{and}\quad c_{K/M}=\frac{c_K}{c_M}=\frac{\mathrm{vol}(M)}{\mathrm{vol}(K)}.\]
As $K$ is compact and $\mu_{K/M}$ is a probability measure we can conclude $c_{K/M}^{-1}=\mathrm{vol}(K/M)$, so that
\[\mathrm{vol}(K/M) = \frac{\mathrm{vol}(K)}{\mathrm{vol}(M)}.\]
\end{enumerate}
\end{remark}

\begin{remark}\label{rem:c0}
A comparison  of \eqref{pull-backofdx} and its proof with \eqref{eq:dvolGK} shows that
\[ c_{G/K}\,  d\mathrm{vol}_{G/K} = dx = c_0 2^{-\frac{1}{2}\dim(\nn)}d\mathrm{vol}_{G/K}\]
and, using Remark~\ref{rem:volGK},
\[c_0 = 2^{\frac{1}{2}\dim(\nn)}\, c_{G/K}= 2^{\frac{1}{2}\dim(\nn)}\,  \frac{c_G}{c_K}.\]

\end{remark}

Next we analyze the consequences of the chosen normalization of $d\bar n$ for the value of $c_N=c_{\bar N}$. To this end we have a closer look at the proof of \cite[Thm.~I.5.20]{helgason:1984}.

\begin{remark}[Calculating $c_N$]\label{rem:cN}
    Consider the map $\varphi:\bar N\to K/M,\ \bar n\mapsto k_1(\bar n) M$, see \eqref{iwasawas}. This is a diffeomorphism onto an open dense subset of $K/M$. Therefore there  exists a smooth function $\tilde \psi$ on $\bar N$ such that
    \[\tilde \psi \,  d\mathrm{vol}_{\bar N} = \varphi^* \,  d\mathrm{vol}_{K/M}.\]
    
    Then for $F:K/M\to \CC$ we have
    \begin{align*}c_{K/M}^{-1}\int_{K/M} F\, d\mu_{K/M}
    &=\int_{K/M}  F\,d\mathrm{vol}_{K/M}=\int_{\bar N}  \varphi^*(F\,d\mathrm{vol}_{K/M})\\
    &=\int_{\bar N}  (\varphi^*F) \varphi^*\,d\mathrm{vol}_{K/M}
    =\int_{\bar N}  (F\circ\varphi) \varphi^*\,d\mathrm{vol}_{K/M}\\
    &=\int_{\bar N} (F\circ\varphi)  \tilde\psi \, d\mathrm{vol}_{\bar N}=c_{\bar N}^{-1}\int_{\bar N} (F\circ\varphi)  \tilde\psi \, d\mu_{\bar N}.
    \end{align*}
    \cite[Thm.~I.5.20]{helgason:1984} yields the equality
    \[\int_{K/M} F \, d\mu_{K/M} = \int_{\bar N} (F\circ\varphi)  e^{-2\rho\circ H} \, d\mu_{\bar N}.\]
    Together we get
    \[\int_{\bar N} (F\circ\varphi)  e^{-2\rho\circ H} \, d\mu_{\bar N}=\frac{c_{K/M}}{c_{\bar N}}\int_{\bar N} (F\circ\varphi)  \tilde\psi \, d\mu_{\bar N},\]
    so that 
    \[\tilde\psi(\bar n) = \frac{c_{\bar N}}{c_{K/M}}e^{-2\rho(H(\bar n))} = c_{\bar N}\mathrm{vol}(K/M)e^{-2\rho(H(\bar n))}\]
    and in particular
    \[c_{\bar N}=\frac{\tilde\psi(e)}{\mathrm{vol}(K/M)}\]
    
In order to calculate $\tilde\psi(e)$ we need to determine $\varphi^*\,  d\mathrm{vol}_{K/M}(e)$, so we want to calculate the derivative $d\varphi(e):\bar \nn\to \kk$. The derivative of the Iwasawa projection $k_1: G=KAN\to K$ in $e$ is the projection $\gg=\kk+\aa+\nn\to \kk$ along  $\aa+\nn$. The derivative of $\varphi$ in $e$ is the restriction of the composition of this projection with the canonical projection $\kk\to \kk/\mm$. Note that
\[E_{-\alpha}= E_\alpha+E_{-\alpha} -E_\alpha = E_\alpha^K- E_\alpha\in \kk +\nn, \]
so that $d\varphi(e)(E_{-\alpha})= E_\alpha^K+\mm$. As the $E_\alpha^K$ span $\mm^\perp\subseteq \kk$ whereas the $E_{-\alpha}$ span $\bar\nn$ this gives a complete description of $d\varphi(e)$. Note that $\varphi^*\eta_\alpha^K(e) =\eta_\alpha^K\circ d\varphi(e)$ and
\[\varphi^*\eta_\alpha^K(e)(E_{-\alpha'}) =\eta_\alpha^K(e)(E_{\alpha'}+E_{-\alpha'})
=\eta_\alpha^K(e)(E_{\alpha'}^K)
=\delta_{\alpha\alpha'}.
\]
This shows that $\varphi^*\eta_\alpha^K(e)=\eta_{-\alpha}(e)$ and hence by Remark~\ref{rem:volume forms X and B}(iii)
\[(\varphi^*\,  d\mathrm{vol}_{K/M})(e)
= 2^{\frac{1}{2}\dim\nn} \,  d\mathrm{vol}_{\bar N}(e).\]
Now the defining equation of $\tilde \psi$ gives
\[\tilde\psi(e)= 2^{\frac{1}{2}\dim\nn}, \]
so that we finally obtain 
\[c_{\bar N}=\frac{2^{\frac{1}{2}\dim\nn}}{\mathrm{vol}(K/M)},\]
which agrees with the normalization obtained from \cite[Prop.~2.5.7]{gangolli-varadarajan}. 
    \end{remark}

Finally we can calculate also $c_1$.

\begin{remark}
Combining Remark~\ref{rem:c_G} with Remark~\ref{rem:cN} yields
\[c_G=c_{\bar N} c_M c_A c_N = c_N^2 c_A c_M 
= 2^{\dim\nn}\Big(\frac{c_K}{c_M}\Big)^2 c_M c_A
= 2^{\dim\nn}\frac{c_A{c_K}^2}{c_M}.\]
On the other hand, let $\iota_{\bar N}:\bar N\to G$ be the inclusion. Since 
\[dp_B(e):\bar \nn \to \gg/\mm+\aa+\nn\cong \kk/\mm,\,\, E_{-\alpha}\mapsto E_{-\alpha}+\mm+\aa+\nn\cong E_\alpha+E_{-\alpha}+\mm,\]
we get 
\[(p_B\circ \iota_{\bar N})^*db 
= \frac{c_K}{c_M }(p_B\circ \iota_{\bar N})^*d\mathrm{vol}_{B} = \frac{c_K}{c_M }\, d\mathrm{vol}_{\bar N}
= \frac{c_K}{c_M c_N }\, d\bar n.\]
Comparing this to 
Lemma~\ref{lemmadb} and its proof shows now that
\[c_1=  \frac{c_K}{c_M c_N} = \frac{2^{-\frac{1}{2}\dim\nn}\mathrm{vol}(K/M)}{\mathrm{vol}(K/M)}=2^{-\frac{1}{2}\dim\nn}=2^{-\frac{1}{2}\dim(K/M)} .   \]
Recalling the formula for $c_0$ in Remark~\ref{rem:c0}, the formula for $c_N=c_{\bar N}$ in Remark~\ref{rem:cN}, the product formula \eqref{eq: c_G} and the formula for $c_{K/M}$ from Remark~\ref{rem:volGK}  we get
\[c_0 
= 2^{\frac{1}{2}\dim(\nn)}\,  \frac{c_G}{c_K} 
= 2^{\frac{1}{2}\dim(\nn)}\, 2^{\dim(\nn)}\Big(\frac{c_K}{c_M}\Big)^2\,  \frac{c_M c_A}{c_K} 
= 2^{\frac{3}{2}\dim(\nn)}\, \frac{c_A}{\mathrm{vol}(K/M)} .\]
The normalization for $da$ given in \cite[\S~II.3.1]{helgason:2008} is such that 
\[c_A=(2\pi)^{-\frac{1}{2}\dim\aa}=(2\pi)^{-\frac{r}{2}},\]
so that
\[c_0 = (2\pi)^{-\frac{1}{2}\dim \aa}2^{\frac{3}{2}\dim(\nn)}\, \frac{1}{\mathrm{vol}(K/M)} 
=\pi^{-\frac{1}{2}\mathrm{rank}(G/K)}\frac{2^{-\frac{1}{2}\mathrm{rank}(G/K)+\frac{3}{2}\dim(\nn)}}{\mathrm{vol}(K/M)}.\]
The constant $c_0c_1^{-1}$ showing up in Theorem~\ref{ThQG} is given by 
\[c_0c_1^{-1} = c_A \frac{2^{\dim\nn}}{\mathrm{vol}(K/M)}
= (2\pi)^{-\frac{\mathrm{rank}(G/K)}{2}}\frac{2^{\dim(K/M)}}{\mathrm{vol}(K/M)}\]
\end{remark}

\begin{remark}
    We apply the integral formula in \cite[Thm.~I.5.17]{helgason:1984} to rewrite the integral 
    \[\int_{\mathfrak s} e^{-|H|^2}\, d\,\mathrm{vol}_\mathfrak{s}(H)= \pi^{\frac{1}{2}\dim\mathfrak{s}},\] where the norm on $\mathfrak s$ comes from the inner product $B_\sigma$ which on $\mathfrak s$ agrees with the Killing form. Note here that
    \[\dim\mathfrak{s} = \dim X= \dim \mathfrak n +\dim \mathfrak a.\]
    The integral formula yields
    \begin{align*}
        \pi^{\frac{1}{2}\dim\mathfrak{s}}
    &= \int_{K/M} \int_{\mathfrak{a}_+} e^{-|H|^2}\prod_{\alpha\in \Sigma^+} |\alpha(H)|^{m_\alpha}  d\,\mathrm{vol}_\mathfrak{a}(H)\, d\,\mathrm{vol}_{K/M}(k)\\
    &=\mathrm{vol}(K/M) \int_{\mathfrak{a}_+} e^{-|H|^2}\prod_{\alpha\in \Sigma^+} |\alpha(H)|^{m_\alpha}  d\,\mathrm{vol}_\mathfrak{a}(H).
    \end{align*}
    Thus we have written $\mathrm{vol}(K/M)$ purely in terms of root data.
\end{remark}

\bigskip

\begingroup
\sloppy

{\bf Acknowledgements:} The research of ACF was financed by Portuguese Funds through FCT (Fundação para a Ciência e a Tecnologia, I.P.) within the Project UID/00013/2025
(https://doi.org/10.54499/UID/00013/2025).
JH was partially funded by the Deutsche Forschungsgemeinschaft (DFG, German Research Foundation) – Pro\-ject-ID 491392403 – TRR 358.
JM and JN were supported by FCT/Portugal and the Recovery and Resilience Plan (PRR) through projects UID/04459/2025 and UID/PRR/04459/2025.

\endgroup

\begingroup
\sloppy

\printbibliography
\endgroup

\end{document}